\newtheorem{theorem}{Theorem}[section]
\newtheorem{lemma}[theorem]{Lemma}
\newtheorem{corollary}[theorem]{Corollary}
\newtheorem{definition}{Definition}[section]
\newtheorem{observation}[theorem]{Observation}
\newcommand{\defcal}[1]{\expandafter\newcommand\csname c#1\endcsname{{\mathcal{#1}}}}
\newcommand{\defbb}[1]{\expandafter\newcommand\csname b#1\endcsname{{\mathbb{#1}}}}
\newcommand{\defvec}[1]{\expandafter\newcommand\csname v#1\endcsname{{\mathbf{#1}}}}
\newcounter{calBbCounter}
    \edef\CaptialLetter{\Alph{calBbCounter}}
		\edef\LowerLetter{\alph{calBbCounter}}
\newcommand{\eps}{\varepsilon}
\newcommand{\ie}{{\it i.e.}}
\newcommand{\vzero}{{\bar{0}}}
\newcommand{\vone}{{\bar{1}}}
\newcommand{\inner}[1]{ \left\langle {#1} \right\rangle }
\DeclareMathOperator{\diag}{diag}
\title{\textbf{Submodular + Concave}\\~}
\author{Siddharth Mitra\\
\emph{\normalsize{Yale University}}\\
siddharth.mitra@yale.edu
\and
Moran Feldman\\
\emph{\normalsize{University of Haifa}}\\
moranfe@cs.haifa.ac.il
\and
Amin Karbasi\\
\emph{\normalsize{Yale University}}\\
amin.karbasi@yale.edu
}
\date{}
\begin{document}

\maketitle

\begin{abstract}
  It has been well established that first order optimization methods can converge to the maximal objective value of concave functions and provide constant factor approximation guarantees for (non-convex/non-concave) continuous submodular functions. In this work, we initiate the study of the maximization of functions of the form $F(x) = G(x) +C(x)$ over a solvable convex body $P$, where $G$ is a smooth DR-submodular function and $C$ is a smooth concave function. This class of functions is a strict extension of both concave and continuous DR-submodular functions for which no theoretical guarantee is known. We provide a suite of Frank-Wolfe style algorithms, which, depending on the nature of the objective function (i.e., if $G$ and $C$ are monotone or not, and non-negative or not) and on the nature of the set $P$ (i.e., whether it is downward closed or not), provide $1-1/e$, $1/e$, or $1/2$ approximation guarantees. We then use our algorithms to get a framework to smoothly interpolate between choosing a diverse set of elements from a given ground set (corresponding to the mode of a determinantal point process) and choosing a clustered set of elements (corresponding to the maxima of a suitable concave function). Additionally, we apply our algorithms to various functions in the above class (DR-submodular + concave) in both constrained and unconstrained settings, and show that our algorithms consistently outperform natural baselines.
\end{abstract}

\section{Introduction}

Despite their simplicity, first-order optimization methods (such as gradient 
descent and its variants, Frank-Wolfe, momentum based methods, and others) have shown great success 
in many 
machine learning applications. A large body of research in the operations research 
and machine learning communities has fully demystified the 
convergence rate of such methods in minimizing well behaved convex objectives 
\citep{bubeck2015convex, nesterovbook}. 
More recently, a new surge of rigorous results has also shown that gradient 
descent methods can  find the global minima of specific non-convex 
objective functions arisen from non-negative matrix factorization \citep{arora2011computing}, robust PCA \citep{netrapalli2014nonconvex}, phase retrieval \citep{Chen_2019}, matrix completion \citep{Sun_2016}, and the training of wide neural 
networks \citep{du2019gradient, jacot2020neural, allenzhu2020learning},  to name a few. It is also very well known that finding the global minimum of a general non-convex function is indeed computationally intractable  \citep{Murty1987SomeNP}. To avoid such impossibility results, simpler goals have been pursued by the community: either developing algorithms that can escape saddle points and reach local minima \citep{ge2015escaping} or describing structural properties which guarantee that reaching a local minimizer ensures optimality \citep{sun2016nonconvex, guaranteed2017bian, hazan2015graduated}. In the same spirit, this paper quantifies  a large class of non-convex functions for which first-order optimization methods provably achieve near optimal solutions.  


More specifically, we consider objective functions that are formed by the sum of a 
continuous DR-submodular function $G(x)$ and a concave 
function $C(x)$. Recent research 
in non-convex optimization has shown that first-order optimization methods 
provide constant factor approximation guarantees for maximizing  continuous submodular 
functions \cite{guaranteed2017bian, hassani2017gradientmethods, bian2017continuous, niazadeh2018optimal, hassani2020nonconvexand, mokhtari2017conditional}. Similarly, such methods find the global maximizer of concave 
functions. However,  the class of $F(x) = G(x) +C(x)$ functions is strictly larger 
than both concave and continuous DR-submodular functions. More specifically, 
$F(x)$ is not concave nor continuous DR-submodular in general (Figure~\ref{fig:example} illustrates an example). In this paper, we 
show that first-order methods provably provide strong theoretical guarantees for 
maximizing such functions.



The combinations of continuous submodular and concave functions naturally arise 
in many ML applications such as maximizing a regularized submodular function \citep{kazemi2020regularized} or finding the mode of distributions \citep{kazemi2020regularized, robinson2019flexible}. For instance, it is common to add a regularizer to a D-optimal design objective function to  
increase the 
stability of the  final solution against perturbations \citep{he2010laplacianregularized, derezinski20experimental, banditalgsbook}. Similarly, many instances of  log-submodular distributions, such as determinantal point processes (DPPs), have been studied in depth in order to sample a diverse set of items from a ground set \citep{Kulesza_2012, Rebeschini2015fastmixing, anari2016monte}. In order to control the level of diversity, one natural recipe is to consider the combination of a log-concave (e.g., normal distribution, exponential distribution and Laplace distribution) \citep{dwivedi2019logconcave, robinson2019flexible} and log-submodular  distributions \citep{josip2014map, bresler2018learning}, i.e., $\Pr(\vx)\propto \exp(\lambda C(\vx) + (1-\lambda) G(\vx))$ for some $\lambda\in[0,1]$. In this way,  we can obtain a class of distributions that contains log-concave and log-submodular distributions as special cases. However, this class of distributions is strictly more expressive than both log-concave and log-submodular models, e.g., in contrast to log-concave distributions, they are not uni-modal in general.  Finding the mode of such distributions amounts to maximizing a combination of a continuous DR-submodular function and a concave function. 
The contributions of this paper are as follows.

\begin{itemize}
	\item Assuming first-order oracle access for the function $F$, we develop the algorithms: \textsc{Greedy Frank-Wolfe} (Algorithm \ref{alg:continuous_greedy}) and \textsc{Measured Greedy Frank-Wolfe} (Algorithm \ref{alg:measured_continuous_greedy}) which achieve constant factors approximation guarantees between $1-1/e$ and $1/e$ depending on the setting, i.e. depending on the monotonicity and non-negativity of $G$ and $C$, and depending on the constraint set having the down-closeness property or not.
	\item Furthermore, if we have access to the individual gradients of $G$ and $C$, then we are able to make the guarantee with respect to $C$ \emph{exact} using the algorithms: \textsc{Gradient Combining Frank-wolfe} (Algorithm \ref{alg:frank_wolfe}) and \textsc{Non-oblivious Frank-Wolfe} (Algorithm \ref{alg:non_ob_fw}). These results are summarized and made more precise in Table~\ref{table:summary} and Section~\ref{sec:results}.
	\item We then present experiments designed to use our algorithms to smoothly interpolate between contrasting objectives such as picking a diverse set of elements and picking a clustered set of elements. This smooth interpolation provides a way to control the amount of diversity in the final solution. We also demonstrate the use of our algorithms to maximize a large class of (non-convex/non-concave) quadratic programming problems.   
\end{itemize}

\begin{figure}[t!]
  \centering
  \includegraphics[width=\columnwidth]{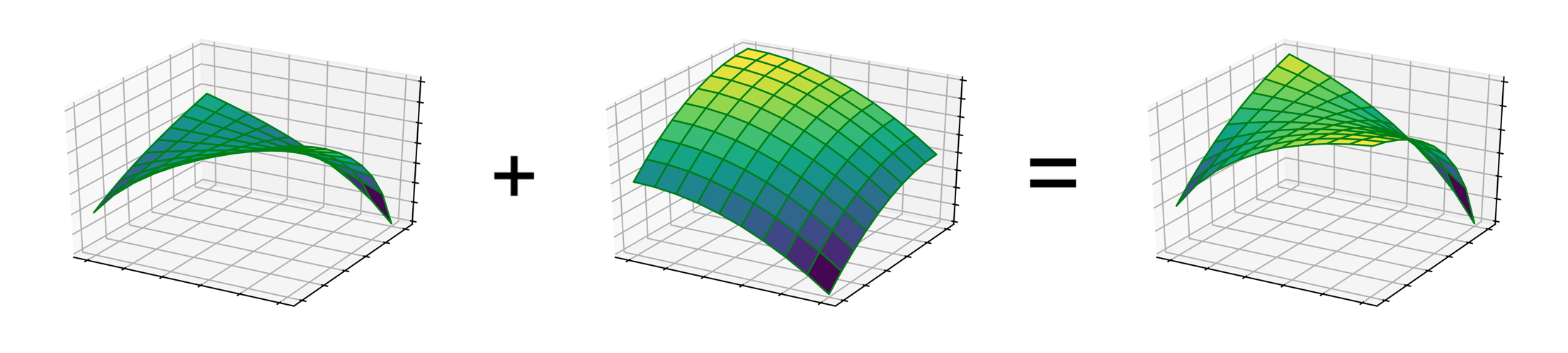}
  \caption{\textsc{Left:} (continuous) DR-submodular softmax extension. \textsc{Middle:} concave quadratic function. \textsc{Right:} sum of both.}
  \vspace{-0.3cm}
  \label{fig:example}
\end{figure}
\paragraph*{Related Work.} The study of discrete submodular maximization   has flourished in the last decade through far reaching applications in machine learning and and artificial intelligence  including viral marketing \citep{kempe2013spread}, dictionary learning \citep{krause2010dictionary}, sparse regression \citep{elenberg2017restricted}, neural network interoperability \citep{elenberg2017streaming}, crowd teaching \citep{singla2014nearoptimally}, sequential decision making \citep{alieva2021learning}, active learning \citep{wei2015active}, and data summarization \citep{mirza2013distributed}. We refer the interested reader to a recent survey by \citet{submodularsurvey} and the references therein. 
  
  Recently, \citet{guaranteed2017bian} proposed an extension of discrete submodular functions to the continuous domains that can be of use in machine learning applications. Notably, this class of (non-convex/non-concave) functions, so called continuous DR-submodular, contains  the continuous multilinear extension of discrete submodular functions \citet{maximizing2011calinescu} as a special case. Continuous DR-submodular functions can reliably model revenue maximization \citep{guaranteed2017bian}, robust budget allocation \citep{staib2017robust}, experimental design \citep{chen2018online}, MAP inference for DPPs \citep{gillenwater2012near, hassani2020stochastic}, energy allocation \citep{wilder2018}, classes of zero-sum games \citep{wilder2018game}, online welfare maximization and online task assignment \citep{sadeghi2020online}, as well as many other settings of interest.

The research on maximizing continuous DR-submodular functions in the last few years has established strong theoretical results in different optimization settings including unconstrained \citep{niazadeh2018optimal, bian2018optimal}, stochastic \cite{mokhtari2017conditional, hassani2017gradientmethods}, online \citep{chen2018online, zhang2019online, sadeghi2019online, raut2021online}, and parallel models of computation  \citep{chen2018unconstrained, mokhtari2018decentralized, xie2019decentralized, ene2019parallel}.

A different line of works study the maximization of discrete functions that can be represented as the sum of a non-negative monotone submodular function and a linear function. The ability to do so is useful in practice since the linear function can be viewed as a soft constraint, and it also has theoretical applications as is argued by the first work in this line \citep{optimal2017sviridenko} (for example, the problem of maximization of a monotone submodular function with a bounded curvature can be reduced to the maximization of the sum of a monotone submodular function and a linear function). In terms of the approximation guarantee, the algorithms suggested by \citet{optimal2017sviridenko} were optimal. However, more recent works improve over the time complexities of these algorithms \citep{guess2021feldman,submodular2019harshaw,team2020ene}, generalize them to weakly-submodular functions \citep{submodular2019harshaw}, and adapt them to other computational models such as the data stream and MapReduce models \citep{regularized2020kazemi,team2020ene}.

\section{Setting and Notation} \label{sec:setting}

Let us now formally define the setting we consider. Fix a subset $\cX$ of $R^n$ of the form $\prod_{i = 1}^n \cX_i$, where $\cX_i$ is a closed range in $\bR$. Intuitively, a function $G\colon \cX \to \bR$ is called (continuous) \emph{DR-submodular} if it exhibits diminishing returns in the sense that given a vector $\vx \in \cX$, the increase in $G(\vx)$ obtained by increasing $x_i$ (for any $i \in [n]$) by $\eps > 0$ is negatively correlated with the original values of the coordinates of $\vx$. This intuition is captured by the following definition. In this definition $\ve_i$ denotes the standard basis vector in the $i^\text{th}$ direction.
\begin{definition}[DR-submodular function]
A function $G\colon \cX \to \bR$ is DR-submodular if for every two vectors $\va, \vb \in \cX$, positive value $k$ and coordinate $i \in [n]$ we have
\[
	G(\va + k\ve_i) - G(\va) \geq G(\vb + k\ve_i) - G(\vb)
\]
whenever $\va \leq \vb$ and $\va + k\ve_i, \vb + k\ve_i \in \cX$.\footnote{Throughout the paper, inequalities between vectors should be understood as holding coordinate-wise.}
\end{definition}

It is well known that when $G$ is continuously differentiable, then it is DR-submodular if and only if $\nabla G(\va) \geq \nabla G(\vb)$ for every two vectors $\va, \vb \in \cX$ that obey $\va \leq \vb$. And when $G$ is a twice differentiable function, it is DR-submodular if and only if its hessian is non-positive at every vector $\vx \in \cX$.
Furthermore, for the sake of simplicity we assume in this work that $\cX = [0, 1]^n$. This assumption is without loss of generality because the natural mapping from $\cX$ to $[0, 1]^n$ preserves all our results.

We are interested in the problem of finding the point in some convex body $P \subseteq [0, 1]^n$ that maximizes a given function $F\colon [0, 1]^n \to \bR$ that can be expressed as the sum of a DR-submodular function $G\colon [0, 1]^n \to \bR$ and a concave function $C\colon [0, 1]^n \to \bR$. To get meaningful results for this problem, we need to make some assumptions. Here we describe three basic assumptions that we make throughout the paper. The quality of the results that we obtain improves if additional assumptions are made, as is described in Section~\ref{sec:results}.

Our first basic assumption is that $G$ is non-negative. This assumption is necessary since we obtain multiplicative approximation guarantees with respect to $G$, and such guarantees do not make sense when $G$ is allowed to take negative values.\footnote{We note that almost all the literature on submodular maximization of both discrete and continuous functions assumes non-negativity for the same reason.} Our second basic assumption is that $P$ is solvable, i.e., that one can efficiently optimize linear functions subject to it. Intuitively, this assumption makes sense because one should not expect to be able to optimize a complex function such as $F$ subject to $P$ if one cannot optimize linear functions subject to it (nevertheless, it is possible to adapt our algorithms to obtain some guarantee even when linear functions can only be approximately optimized subject to $P$). Our final basic assumption is that both functions $G$ and $C$ are $L$-smooth, which means that they are differentiable, and moreover, their gradients are $L$-Lipschitz, i.e.,
\[
	\| \nabla f(\va) - \nabla f(\vb) \|_2 \leq L \| \va - \vb \|_2
	\quad
	\forall\; \va, \vb \in [0, 1]^n
	\enspace.
\]

We conclude this section by introducing some additional notation that we need. We denote by $\vo$ an arbitrary optimal solution for the problem described above, and define $D = \max_{\vx \in P} \|\vx\|_2$. Additionally, given two vectors $\va, \vb \in \bR^n$, we denote by $\va \odot \vb$ their coordinate-wise multiplication, and by $\inner{\va,\vb}$ their standard Euclidean inner product.

%
%

\section{Main Algorithms and Results} \label{sec:results}

In this section we present our (first-order) algorithms for solving the problem described in Section~\ref{sec:setting}. In general, these algorithms are all Frank-Wolfe type algorithms, but they differ in the exact linear function which is maximized in each iteration (step 1 of the while/for loop), and in the formula used to update the solution (step 2 of the while/for loop). As mentioned previously, we assume everywhere that $G$ is a non-negative $L$-smooth DR-submodular function, $C$ is an $L$-smooth concave function, and $P$ is a solvable convex body. Some of our algorithms require additional non-negativity and/or monotonicity assumptions on the functions $G$ and $C$, and occasionally they also require a downward closed assumption on $P$. A summary of which settings each algorithm is applicable to can be found in Table~\ref{table:summary}. Each algorithm listed in the table outputs a point $x \in P$ which is guaranteed to obey $F(x) \geq \alpha \cdot G(\vo) + \beta \cdot C(\vo) - E$ for the constants $\alpha$ and $\beta$ given in Table~\ref{table:summary} and some error term $E$.

\begin{table*}[t]
\caption{Summary of algorithms, settings, and guarantees (``\textsc{non-neg.}'' is a shorthand for ``non-negative''). All of the conditions stated in the table are in addition to the continuity and smoothness of $G$ and $C$, and the convexity of $P$. $\alpha$ and $\beta$ are the constants preceding $G(\vo) $ and $ C(\vo)$ respectively in the lower bound on the output of the algorithm.}
\label{table:summary}
\vskip 0.15in
\begin{center}
\begin{small}
\begin{sc}
\begin{tabular}{lllccc}
\toprule
Algorithm (Section) & \multicolumn{1}{c}{$G$} & \multicolumn{1}{c}{$C$} & $P$ & $\alpha$ & $\beta$\\
\midrule
Greedy & monotone & monotone & \multirow{2}{*}{general} & \multirow{2}{*}{$1-1/e$}& \multirow{2}{*}{$1-1/e$}\\
Frank-Wolfe & \multicolumn{1}{r}{\hspace{1mm}\& non-neg.} & \multicolumn{1}{r}{\hspace{1mm}\& non-neg.} &&&\\[1mm]
Measured Greedy & monotone & \multirow{2}{*}{non-neg.} & \multirow{2}{*}{down-closed} & \multirow{2}{*}{$1-1/e$} & \multirow{2}{*}{$1/e$} \\
Frank-Wolfe & \multicolumn{1}{r}{\& non-neg.} &&&& \\[1mm]
Measured Greedy & \multirow{2}{*}{non-neg.} & monotone & \multirow{2}{*}{down-closed} & \multirow{2}{*}{$1/e$} & \multirow{2}{*}{$1-1/e$}\\
Frank-Wolfe && \multicolumn{1}{r}{\& non-neg.} &&& \\[1mm]
Measured Greedy & monotone & monotone & \multirow{2}{*}{down-closed} & \multirow{2}{*}{$1-1/e$} & \multirow{2}{*}{$1-1/e$}\\
Frank-Wolfe & \multicolumn{1}{r}{\& non-neg.} & \multicolumn{1}{r}{\& non-neg.} &&&\\[1mm]
Measured Greedy & \multirow{2}{*}{non-neg.} & \multirow{2}{*}{non-neg.} & \multirow{2}{*}{down-closed} & \multirow{2}{*}{$1/e$} & \multirow{2}{*}{$1/e$}\\
Frank-Wolfe &&&&&\\[1mm]
Gradient Combining & monotone & \multirow{2}{*}{General} & \multirow{2}{*}{General} & \multirow{2}{*}{$1/2 - \eps$} & \multirow{2}{*}{1}\\
Frank-Wolfe & \multicolumn{1}{r}{\& non-neg.} &&&& \\[1mm]
Non-Oblivious & monotone & \multirow{2}{*}{non-neg.} & General & \multirow{2}{*}{\hspace{-1mm}$1-1/e - \eps$} & \multirow{2}{*}{$1-\eps$}\\
Frank-Wolfe & \multicolumn{1}{r}{\& non-neg.}\\
\bottomrule
\end{tabular}
\end{sc}
\end{small}
\end{center}
\vskip -0.1in
\end{table*}

\subsection{Greedy Frank-Wolfe Algorithm} \label{sec:greedyfw}

In this section we assume that both $G$ and $C$ are monotone and non-negative functions (in addition to their other properties). Given this assumption, we analyze the guarantee of the greedy Frank-Wolfe variant appearing as Algorithm~\ref{alg:continuous_greedy}. This algorithm is related to the Continuous Greedy algorithm for discrete objective functions due to~\citet{maximizing2011calinescu}, and it gets a quality control parameter $\eps \in (0, 1)$. We assume in the algorithm that $\eps^{-1}$ is an integer. This assumption is without loss of generality because, if $\eps$ violates the assumption, then it can be replaced with a value from the range $[\eps/2, \eps]$ that obeys it. 

\begin{algorithm}[H]
\caption{\textsc{Greedy Frank-Wolfe} $(\eps)$} \label{alg:continuous_greedy}
\begin{algorithmic}
\STATE Let $t \gets 0$ and $\vy^{(t)} \gets \vzero$
\WHILE{$t<1$}
\STATE $\vs^{(t)} \gets \arg \max_{\vx \in P} \inner{\nabla F(\vy^{(t)}) , \vx}$
\STATE $\vy^{(t + \eps)} \gets \vy^{(t)} + \eps \cdot \vs^{(t)}$
\STATE $t \gets t + \eps$
\ENDWHILE
\RETURN{$\vy^{(1)}$}
\end{algorithmic}
\end{algorithm}

One can observe that the output $\vy^{(1)}$ of Algorithm~\ref{alg:continuous_greedy} is within the convex body $P$ because it is a convex combination of the vectors $\vs^{(0)}, \vs^{(\eps)}, \vs^{(2\eps)}, \dotsc, \vs^{1 - \eps}$, which are all vectors in $P$. Let us now analyze the value of the output of Algorithm~\ref{alg:continuous_greedy}. The next lemma is the first step towards this goal. It provides a lower bounds on the increase in the value of $\vy^{(t)}$ as a function of $t$.
\begin{lemma} \label{lem:step_continuous_greedy}
For every $0 \leq i < \eps^{-1}$, $F(\vy^{(\eps(i + 1))}) - F(\vy^{(\eps i)}) \geq \eps \cdot [F(\vo) - F(\vy^{(\eps i)})] - \eps^2 L D^2$.
\end{lemma}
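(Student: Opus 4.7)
The plan is to decompose the one-step progress into three standard ingredients: a smoothness step, the Frank-Wolfe optimality of $\vs^{(\eps i)}$, and a lower bound on $\inner{\nabla F(\vy^{(\eps i)}), \vo}$ in terms of $F(\vo) - F(\vy^{(\eps i)})$. Write $\vy \coloneqq \vy^{(\eps i)}$ and $\vs \coloneqq \vs^{(\eps i)}$ for brevity, so that $\vy^{(\eps(i+1))} = \vy + \eps \vs$.

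First, since both $G$ and $C$ are $L$-smooth, $F$ is $2L$-smooth, so the quadratic upper bound on $F(\vy + \eps \vs) - F(\vy)$ gives
\[
F(\vy + \eps \vs) - F(\vy) \geq \eps \inner{\nabla F(\vy), \vs} - L \eps^2 \|\vs\|_2^2 \geq \eps \inner{\nabla F(\vy), \vs} - L \eps^2 D^2,
\]
using $\|\vs\|_2 \leq D$ because $\vs \in P$. Second, since $\vs$ is chosen to maximize $\inner{\nabla F(\vy),\vx}$ over $\vx \in P$ and $\vo \in P$, we have $\inner{\nabla F(\vy), \vs} \geq \inner{\nabla F(\vy), \vo}$.

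The main content is thus to show $\inner{\nabla F(\vy), \vo} \geq F(\vo) - F(\vy)$. Split this into the $G$ and $C$ parts. For $C$, concavity at $\vy$ gives $C(\vo) - C(\vy) \leq \inner{\nabla C(\vy), \vo - \vy}$; since $C$ is monotone we have $\nabla C(\vy) \geq 0$, and since $\vy \geq \vzero$ this yields $\inner{\nabla C(\vy), \vo - \vy} \leq \inner{\nabla C(\vy), \vo}$, so $C(\vo) - C(\vy) \leq \inner{\nabla C(\vy), \vo}$. For $G$, use the standard fact that a DR-submodular function is concave along any non-negative direction: applied to the direction $(\vo \vee \vy) - \vy \geq \vzero$, this gives
\[
G(\vo \vee \vy) - G(\vy) \leq \inner{\nabla G(\vy), (\vo \vee \vy) - \vy}.
\]
By monotonicity, $\nabla G(\vy) \geq 0$, so the right-hand side is at most $\inner{\nabla G(\vy), \vo}$ (because $(\vo \vee \vy) - \vy \leq \vo$ coordinate-wise), and also by monotonicity $G(\vo \vee \vy) \geq G(\vo)$. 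Combining yields $G(\vo) - G(\vy) \leq \inner{\nabla G(\vy), \vo}$.

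Adding the two parts gives $F(\vo) - F(\vy) \leq \inner{\nabla F(\vy), \vo}$, and chaining with the previous two inequalities produces the claimed bound. The only nontrivial step is the treatment of $G$: one must invoke DR-submodularity precisely in the form of concavity along non-negative directions and then use monotonicity twice (once to bound $(\vo \vee \vy) - \vy$'s inner product by $\vo$'s, and once to pass from $G(\vo \vee \vy)$ back to $G(\vo)$); everything else is a direct application of concavity, smoothness, and the linear-maximization step.
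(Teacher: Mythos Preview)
Your proof is correct and follows essentially the same approach as the paper: a smoothness bound, the Frank--Wolfe maximizer step, and then the inequality $\inner{\nabla F(\vy),\vo}\ge F(\vo)-F(\vy)$ obtained by treating $G$ and $C$ separately via monotonicity and DR-submodularity/concavity through the point $\vo\vee\vy$. The only differences are cosmetic: you invoke the quadratic smoothness bound and ``concavity along non-negative directions'' as named facts, whereas the paper writes out the corresponding integrals explicitly, and for $C$ you apply concavity directly between $\vy$ and $\vo$ rather than routing through $\vo\vee\vy$.
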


\begin{proof}
Observe that
\begin{align*}
	F(\vy^{(\eps(i + 1))}) - F(&\vy^{(\eps i)})
	=
	F(\vy^{(\eps i)} + \eps \cdot \vs^{(\eps i)}) - F(\vy^{(\eps i)})
	=
	\int_0^\eps \left. \frac{d F(\vy^{(\eps i)} + z \cdot \vs^{(\eps i)})}{d z} \right|_{z = r} dr
	\\
	={} &
	\int_0^\eps \inner{\vs^{(\eps i)}, \nabla F(\vy^{(\eps i)} + r \cdot \vs^{(\eps i)})} dr
	\geq
	\int_0^\eps \left\{\inner{\vs^{(\eps i)}, \nabla F(\vy^{(\eps i)})} - 2r L D^2 \right\} dr\\
	={} &
	\inner{\eps \vs^{(\eps i)}, \nabla F(\vy^{(\eps i)})} - \eps^2 L D^2
	\geq
	\inner{\eps \vo, \nabla F(\vy^{(\eps i)})} - \eps^2 L D^2
	\enspace.
\end{align*}
where the last inequality follows since $\vs^{(\eps i)}$ is the maximizer found by Algorithm~\ref{alg:continuous_greedy}.
Therefore, to prove the lemma it only remains to show that $\inner{\vo, \nabla F(\vy^{(\eps i)})} \geq F(\vo) - F(\vy^{(\eps i)})$.

Since $C$ is concave and monotone,
\[
	C(\vo)
	\leq
	C(\vo \vee \vy^{(\eps i)})
	\leq
	C(\vy^{(\eps i)}) + \inner{\vo \vee \vy^{(\eps i)} - \vy^{(\eps i)}, \nabla C(\vy^{(\eps i)})}
	\leq
	C(\vy^{(\eps i)}) + \inner{\vo, \nabla C(\vy^{(\eps i)})}
	\enspace.
\]
Similarly, since $G$ is DR-submodular and monotone,
\begin{align*}
	G(\vo)
	\leq{} &
	G(\vo \vee \vy^{(\eps i)})
	=
	G(\vy^{(\eps i)}) + \int_0^1 \left. \frac{d G(y^{(\eps i)} + z \cdot (\vo \vee \vy^{(\eps i)} - \vy^{(\eps i)}))}{dz} \right|_{z = r} dr\\
	={} &
	G(\vy^{(\eps i)}) + \int_0^1 \inner{\vo \vee \vy^{(\eps i)} - \vy^{(\eps i)}, \nabla G(\vy^{(\eps i)} + r \cdot (\vo \vee \vy^{(\eps i)} - \vy^{(\eps i)}))} dr\\
	\leq{} &
	G(\vy^{(\eps i)}) + \inner{\vo \vee \vy^{(\eps i)} - \vy^{(\eps i)}, \nabla G(\vy^{(\eps i)})}
	\leq
	G(\vy^{(\eps i)}) + \inner{\vo, \nabla G(\vy^{(\eps i)})}
	\enspace.
\end{align*}
Adding the last two inequalities and rearranging gives the inequality $\inner{\vo, \nabla F(\vy^{(\eps i)})} \geq F(\vo) - F(\vy^{(\eps i)})$ that we wanted to prove.
\end{proof}

The corollary below follows by showing (via induction) that the inequality $F(\vy^{(\eps i)})
	\geq
	\big[1 - \left(1 - \eps\right)^i\big] \cdot F(\vo) - i \cdot \eps^2 LD^2 $ holds for every integer $0 \leq i \leq \eps^{-1}$, and then plugging in $i = \eps^{-1}$. The inductive step is proven using Lemma~\ref{lem:step_continuous_greedy}.

\begin{corollary} \label{cor:continuous_greedy_guarantee}
$F(\vy^{(1)}) \geq (1 - e^{-1}) \cdot F(\vo) - O(\eps LD^2)$.
\end{corollary}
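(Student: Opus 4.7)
The plan is to follow the hint exactly: prove by induction on $i \in \{0, 1, \dots, \eps^{-1}\}$ the strengthened inequality
\[
	F(\vy^{(\eps i)}) \geq \bigl[1 - (1 - \eps)^i\bigr] \cdot F(\vo) - i \cdot \eps^2 L D^2,
\]
and then specialize to $i = \eps^{-1}$.

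For the base case $i = 0$, both sides of the inequality collapse: the right-hand side is $0$, and $F(\vy^{(0)}) = F(\vzero) = G(\vzero) + C(\vzero) \geq 0$ holds since in this section we assume both $G$ and $C$ are non-negative. For the inductive step, I would take the inequality from Lemma~\ref{lem:step_continuous_greedy} and rearrange it as
\[
	F(\vy^{(\eps(i+1))}) \geq (1 - \eps) F(\vy^{(\eps i)}) + \eps F(\vo) - \eps^2 L D^2,
\]
then substitute the inductive hypothesis into the first term. Expanding $(1-\eps)[1-(1-\eps)^i] + \eps = 1 - (1-\eps)^{i+1}$ handles the $F(\vo)$-coefficient, and the error term becomes $(1-\eps) \cdot i \eps^2 L D^2 + \eps^2 L D^2 \leq (i+1) \eps^2 L D^2$, giving exactly the desired bound at $i+1$.

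Setting $i = \eps^{-1}$ yields
\[
	F(\vy^{(1)}) \geq \bigl[1 - (1-\eps)^{1/\eps}\bigr] \cdot F(\vo) - \eps L D^2.
\]
Using $(1-\eps)^{1/\eps} \leq e^{-1}$ together with $F(\vo) \geq 0$ (again by non-negativity of $G$ and $C$) converts the coefficient into $1 - e^{-1}$, and the additive error $\eps L D^2$ is absorbed into $O(\eps L D^2)$, which is the claimed bound.

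No serious obstacle is expected here; the proof is a standard telescoping-style induction built on top of Lemma~\ref{lem:step_continuous_greedy}. The only subtlety worth flagging is that replacing $1 - (1-\eps)^{1/\eps}$ by $1 - e^{-1}$ in a lower bound requires $F(\vo) \geq 0$, which is why the non-negativity hypotheses of this subsection are being used at this final step (and implicitly in the base case).
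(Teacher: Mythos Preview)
Your proposal is correct and follows essentially the same approach as the paper's own proof: the same induction hypothesis, the same use of Lemma~\ref{lem:step_continuous_greedy} for the inductive step, and the same specialization to $i=\eps^{-1}$ with $(1-\eps)^{1/\eps}\leq e^{-1}$. Your explicit remark that $F(\vo)\geq 0$ is needed in the final step is a nice clarification that the paper leaves implicit.
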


\begin{proof}
We prove by induction that for every integer $0 \leq i \leq \eps^{-1}$ we have
\begin{equation} \label{eq:induction_claim}
	F(\vy^{(\eps i)})
	\geq
	\left[1 - \left(1 - \eps\right)^i\right] \cdot F(\vo) - i \cdot \eps^2 LD^2
	\enspace.
\end{equation}
One can note that the corollary follows from this inequality by plugging $i = \eps^{-1}$ since $(1 - \eps)^{\eps^{-1}} \leq e^{-1}$.

For $i = 0$, Inequality~\eqref{eq:induction_claim} follows from the non-negativity of $F$ (which follows from the non-negativity of $G$ and $C$). Next, let us prove Inequality~\eqref{eq:induction_claim} for $1 \leq i \leq \eps^{-1}$ assuming it holds for $i - 1$. By Lemma~\ref{lem:step_continuous_greedy},
\begin{align*}
	F(\vy^{(\eps i)})
	\geq{} &
	F(\vy^{(\eps (i - 1))}) + \eps \cdot [F(\vo) - F(\vy^{(\eps (i - 1))})] - \eps^2 L D^2\\
	={} &
	(1 - \eps) \cdot F(\vy^{(\eps (i - 1))}) + \eps \cdot F(\vo) - \eps^2 L D^2\\
	\geq{} &
	(1 - \eps) \cdot \left\{\left[1 - \left(1 - \eps\right)^{i - 1}\right] \cdot F(\vo) - (i - 1) \cdot \eps^2 LD^2\right\} + \eps \cdot F(\vo) - \eps^2 L D^2\\
	\geq{} &
	\left[1 - \left(1 - \eps\right)^i\right] \cdot F(\vo) - i \cdot \eps^2 L D^2
	\enspace,
\end{align*}
where the second inequality holds due to the induction hypothesis.
\end{proof}

We are now ready to summarize the properties of Algorithm~\ref{alg:continuous_greedy} in a theorem.
\begin{theorem} \label{thm:continuous_greedy}
Let $S$ be the time it takes to find a point in $P$ maximizing a given liner function, then Algorithm~\ref{alg:continuous_greedy} runs in $O(\eps^{-1}(n + S))$ time, makes $O(1/\eps)$ calls to the gradient oracle, and outputs a vector $\vy$ such that $F(\vy) \geq (1 - 1/e) \cdot F(\vo) - O(\eps L D^2)$.
\end{theorem}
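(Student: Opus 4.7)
The plan is to observe that the approximation guarantee is already delivered by Corollary~\ref{cor:continuous_greedy_guarantee}, so the bulk of the work that remains is the routine bookkeeping of running time and oracle complexity. My proof will therefore be short: I will cite the corollary for the $(1 - 1/e) \cdot F(\vo) - O(\eps L D^2)$ lower bound, and then separately account for the cost of each iteration and the number of iterations.

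For the iteration count, I would note that the while loop increments $t$ by $\eps$ starting from $t = 0$ and terminates when $t \geq 1$, so it executes exactly $\eps^{-1}$ times (using the assumption that $\eps^{-1}$ is an integer). Inside each iteration, there is exactly one gradient evaluation (to form $\nabla F(\vy^{(t)})$), one call to the linear maximization oracle over $P$ (costing $S$ by definition), and one vector update $\vy^{(t+\eps)} \gets \vy^{(t)} + \eps \cdot \vs^{(t)}$, which takes $O(n)$ arithmetic operations. Summing over the $\eps^{-1}$ iterations yields $O(1/\eps)$ gradient oracle calls and total running time $O(\eps^{-1}(n + S))$, as claimed.

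Finally, I would observe that the output $\vy^{(1)}$ lies in $P$ — this was already noted in the text right after the algorithm (it is a convex combination with coefficients $\eps, \eps, \dotsc, \eps$ summing to $1$ of the feasible points $\vs^{(0)}, \vs^{(\eps)}, \dotsc, \vs^{(1-\eps)}$) — so the guarantee from Corollary~\ref{cor:continuous_greedy_guarantee} is a guarantee on a genuinely feasible solution. There is essentially no obstacle here: the only substantive inequalities (the per-step progress bound and the induction that turns it into the $(1-1/e)$ factor) were already carried out in Lemma~\ref{lem:step_continuous_greedy} and Corollary~\ref{cor:continuous_greedy_guarantee}, and the theorem is simply their packaging together with the trivial complexity count.
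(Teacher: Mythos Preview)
Your proposal is correct and mirrors the paper's own proof essentially verbatim: the paper also cites Corollary~\ref{cor:continuous_greedy_guarantee} for the value guarantee and then observes that the while loop makes $\eps^{-1}$ iterations, each costing $O(n+S)$ time plus one gradient-oracle call. Your additional remark about feasibility of $\vy^{(1)}$ is already handled in the text preceding the theorem, so it is fine to include but not strictly necessary.
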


\begin{proof}
The output guarantee of Theorem \ref{thm:continuous_greedy} follows directly from Corollary~\ref{cor:continuous_greedy_guarantee}. The time and oracle complexity follows by observing that the algorithm's \textbf{while} loop makes $\eps^{-1}$ iterations, and each iteration requires $O(n + S)$ time, in addition to making a single call to the gradient oracle.
\end{proof}

\subsection{Measured Greedy Frank-Wolfe Algorithm}\label{section:measured_greedyfw}

In this section we assume that $P$ is a down-closed body (in addition to being convex) and that $G$ and $C$ are both non-negative. Given these assumptions, we analyze the guarantee of the variant of Frank-Wolfe appearing as Algorithm~\ref{alg:measured_continuous_greedy}, which is motivated by the Measured Continuous Greedy algorithm for discrete objective functions due to~\citet{feldman2011greedy}. We again have a quality control parameter $\eps \in (0, 1)$, and assume (without loss of generality) that $\eps^{-1}$ is an integer. 

\begin{algorithm}[H]
\caption{\textsc{Measured Greedy Frank-Wolfe} $(\eps)$} \label{alg:measured_continuous_greedy}
\begin{algorithmic}
\STATE Let $t \gets 0$ and $\vy^{(t)} \gets \vzero$
\WHILE{$t<1$}
\STATE $\vs^{(t)} \gets \arg \max_{\vx \in P} \langle (\vone - \vy^{(t)}) \odot \nabla F(\vy^{(t)}) , \vx \rangle$
\STATE $\vy^{(t + \eps)} \gets \vy^{(t)} + \eps \cdot (\vone - \vy^{(t)}) \odot \vs^{(t)}$
\STATE $t \gets t + \eps$
\ENDWHILE
\RETURN{$\vy^{(1)}$}
\end{algorithmic}
\end{algorithm}

We begin the analysis of Algorithm~\ref{alg:measured_continuous_greedy} by bounding the range of the entries of the vectors $\vy^{(t)}$, which is proven by induction.
\begin{lemma} \label{lem:y_bound}
For every two integers $0 \leq i \leq \eps^{-1}$ and $1 \leq j \leq n$, $0 \leq y^{(\eps i)}_j \leq 1 - (1 - \eps)^i \leq 1$.
\end{lemma}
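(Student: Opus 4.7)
The plan is to prove the lemma by straightforward induction on $i$, using the structure of the update rule in Algorithm~\ref{alg:measured_continuous_greedy} together with the fact that $P \subseteq [0,1]^n$ (so each coordinate of $\vs^{(\eps i)}$ lies in $[0,1]$).

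For the base case $i = 0$, the initialization sets $\vy^{(0)} = \vzero$, and $1 - (1-\eps)^0 = 0$, so every coordinate satisfies the stated bounds trivially. For the inductive step, I would assume the statement for some $i < \eps^{-1}$ and read off from the update rule the coordinate-wise identity
\[
y^{(\eps(i+1))}_j = y^{(\eps i)}_j + \eps \cdot (1 - y^{(\eps i)}_j) \cdot s^{(\eps i)}_j.
\]
For the lower bound, I would observe that by the induction hypothesis $1 - y^{(\eps i)}_j \geq 0$, and since $\vs^{(\eps i)} \in P \subseteq [0,1]^n$ its coordinate is non-negative, so the increment added to $y^{(\eps i)}_j$ is non-negative; hence $y^{(\eps(i+1))}_j \geq y^{(\eps i)}_j \geq 0$.

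For the upper bound, I would use $s^{(\eps i)}_j \leq 1$ to get $y^{(\eps(i+1))}_j \leq (1-\eps)\, y^{(\eps i)}_j + \eps$. Plugging in the inductive bound $y^{(\eps i)}_j \leq 1 - (1-\eps)^i$, a one-line algebraic simplification yields $y^{(\eps(i+1))}_j \leq 1 - (1-\eps)^{i+1}$, which is at most $1$ since $(1-\eps)^{i+1} \geq 0$. This closes the induction.

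There is no real obstacle here: the argument is essentially mechanical, and the only subtlety to flag is why the quantity $(\vone - \vy^{(\eps i)}) \odot \vs^{(\eps i)}$ appearing in the update is coordinate-wise in $[0,1]$ — which is exactly what the induction hypothesis $y^{(\eps i)}_j \leq 1$ together with $P \subseteq [0,1]^n$ gives us. This bound is what makes the Measured Continuous Greedy update stay inside $[0,1]^n$, and it will be the main ingredient re-used in later steps of the analysis (in particular to argue that $\vy^{(1)} \in P$ when $P$ is down-closed, since $\vy^{(1)}$ is dominated by a convex combination of the $\vs^{(t)}$'s).
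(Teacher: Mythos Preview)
Your proposal is correct and follows essentially the same inductive argument as the paper's proof: base case from the initialization $\vy^{(0)}=\vzero$, lower bound from the non-negativity of the increment (using $y^{(\eps i)}_j \leq 1$ and $s^{(\eps i)}_j \geq 0$), and upper bound from $s^{(\eps i)}_j \leq 1$ together with the one-line algebra $(1-\eps)[1-(1-\eps)^i]+\eps = 1-(1-\eps)^{i+1}$. The only cosmetic difference is that the paper steps from $i-1$ to $i$ while you step from $i$ to $i+1$.
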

\begin{proof}
We prove the lemma by induction on $i$. For $i = 0$ the lemma is trivial since $y^{(0)}_j = 0$ by the initialization of $\vy^{(0)}$. Assume now that the lemma holds for $i - 1$, and let us prove it for $1 \leq i \leq \eps^{-1}$. Observe that
\[
	y^{(\eps i)}_j
	=
	y^{(\eps (i - 1))}_j + \eps \cdot (1 - y^{(\eps (i - 1))}_j) \cdot s^{(\eps i)}_j
	\geq
	y^{(\eps (i - 1))}_j
	\geq
	0
	\enspace,
\]
where the first inequality holds since $y^{(\eps (i - 1))}_j \leq 1$ by the induction hypothesis and $s^{(\eps i)}_j \geq 0$ since $\vs^{(\eps i)} \in P$. Similarly,
\begin{align*}
	y^{(\eps i)}_j
	={} &
	y^{(\eps (i - 1))}_j + \eps \cdot (1 - y^{(\eps (i - 1))}_j) \cdot s^{(\eps i)}_j
	\leq
	y^{(\eps (i - 1))}_j + \eps \cdot (1 - y^{(\eps (i - 1))}_j)\\
	={} &
	\eps + (1 - \eps) \cdot y^{(\eps (i - 1))}_j
	\leq
	\eps + (1 - \eps) \cdot [1 - (1 - \eps)^{i - 1}]
	=
	1 - (1 - \eps)^i
	\enspace,
\end{align*}
where the first inequality holds because $s^{(\eps i)}_j \leq 1$ since $\vs^{(\eps i)} \in P$, and the second inequality holds by the induction hypothesis.
\end{proof}

Using the last lemma, we can see why the output of Algorithm \ref{alg:measured_continuous_greedy} must be in $P$. Recall that the output of Algorithm~\ref{alg:measured_continuous_greedy} is
$
	\vy^{(1)}
	=
	\sum_{i = 1}^{\eps^{-1}} \eps \cdot (\vone - \vy^{\eps (i - 1)}) \odot \vs^{(\eps i)}
	\leq
	\eps \cdot \sum_{i = 1}^{\eps^{-1}} \vs^{(\eps i)}, $ where the inequality follows due to Lemma \ref{lem:y_bound} and the fact that $\vs^{(\eps i)}$ (as a vector in $P$) is non-negative. Since $\eps \cdot \sum_{i = 1}^{\eps^{-1}} \vs^{(\eps i)}$ is a convex combination of points in $P$, it also belongs to $P$. Since the vector $\vy^{(1)}$ is coordinate-wise dominated by this combination, the down-closeness of $P$ implies $\vy^{(1)} \in P$.

Our next objective is to prove an approximation guarantee for Algorithm~\ref{alg:measured_continuous_greedy}. Towards this goal, let us define, for every function $H$, the expression $M(H, i)$ to be $1$ when $H$ is monotone and $(1 - \eps)^i$ when $H$ is non-monotone. Using this definition, we can state the following lemma and corollary, which are counterparts of Lemma~\ref{lem:step_continuous_greedy} and Corollary~\ref{cor:continuous_greedy_guarantee} from Section~\ref{sec:greedyfw}.
\begin{lemma} \label{lem:step_measured_continuous_greedy}
For every integer $0 \leq i < \eps^{-1}$, $F(\vy^{(\eps(i + 1))}) - F(\vy^{(\eps i)}) \geq \eps \cdot \{[M(G, i) \cdot G(\vo) - G(\vy^{(\eps i)})] + [M(C, i) \cdot C(\vo) - C(\vy^{(\eps i)})]\} - \eps^2 L D^2$.
\end{lemma}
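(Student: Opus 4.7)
The plan is to mirror the proof of Lemma~\ref{lem:step_continuous_greedy}, substituting the Frank--Wolfe direction $\vs^{(\eps i)}$ with the measured step direction $\vv^{(\eps i)} = (\vone - \vy^{(\eps i)}) \odot \vs^{(\eps i)}$. First, I would apply smoothness of $F$ (inherited from that of $G$ and $C$) together with Cauchy--Schwarz to the integral formulation of $F(\vy^{(\eps(i+1))}) - F(\vy^{(\eps i)})$, producing a single inner-product term at the cost of an additive $\eps^2 L D^2$ error. The key facts here are that $\vone - \vy^{(\eps i)} \in [0,1]^n$ by Lemma~\ref{lem:y_bound} and $\vs^{(\eps i)} \in P$, so $\|\vv^{(\eps i)}\|_2 \leq \|\vs^{(\eps i)}\|_2 \leq D$. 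I would then rewrite $\inn{\vv^{(\eps i)}, \nabla F(\vy^{(\eps i)})} = \inn{\vs^{(\eps i)}, (\vone - \vy^{(\eps i)}) \odot \nabla F(\vy^{(\eps i)})}$ and invoke the optimality of $\vs^{(\eps i)}$ in the algorithm's subproblem to lower bound this by $\inn{\vu, \nabla F(\vy^{(\eps i)})}$, where $\vu = \vo \odot (\vone - \vy^{(\eps i)})$.

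With this reduction in place, the goal becomes, separately for each of $H \in \{G, C\}$, to establish $\inn{\vu, \nabla H(\vy^{(\eps i)})} \geq M(H, i) \cdot H(\vo) - H(\vy^{(\eps i)})$. Set $\vz = \vy^{(\eps i)} + \vu \in [0, 1]^n$. The common intermediate step is to show $\inn{\vu, \nabla H(\vy^{(\eps i)})} \geq H(\vz) - H(\vy^{(\eps i)})$: for $H = G$, this follows by integrating the antitone gradient $\nabla G$ along the non-negative-direction segment from $\vy^{(\eps i)}$ to $\vz$ (DR-submodularity); for $H = C$, it is just the first-order over-estimate property of concavity. It then remains to lower bound $H(\vz)$ by $M(H, i) \cdot H(\vo)$. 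In the monotone case, $\vz \geq \vo$ coordinate-wise, since $z_j = o_j + y^{(\eps i)}_j (1 - o_j) \geq o_j$, so $H(\vz) \geq H(\vo)$, matching $M(H, i) = 1$.

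For the non-monotone (but non-negative) case, I would define $\phi(t) = H(\vo + t(\vz - \vo))$ and observe that $\vz - \vo = \vy^{(\eps i)} \odot (\vone - \vo)$ is coordinate-wise non-negative. This makes $\phi$ concave on its domain: when $H = C$ by plain concavity, and when $H = G$ by the standard fact that DR-submodular functions are concave along non-negative directions. Setting $c = 1 - (1-\eps)^i$, Lemma~\ref{lem:y_bound} yields $\|\vy^{(\eps i)}\|_\infty \leq c$, and a direct coordinate check (the binding constraint being $o_j + t\,y^{(\eps i)}_j(1 - o_j) \leq 1$, equivalent to $t\, y^{(\eps i)}_j \leq 1$) then shows $\vo + t(\vz - \vo) \in [0, 1]^n$ for every $t \in [0, 1/c]$. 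Combining concavity of $\phi$ on $[0, 1/c]$ with non-negativity of $H$ gives
\[
H(\vz) = \phi(1) \geq (1 - c)\,\phi(0) + c\,\phi(1/c) \geq (1 - c)\, H(\vo) = (1-\eps)^i\, H(\vo) = M(H, i) \cdot H(\vo).
\]
Summing the $G$ and $C$ contributions and combining with the smoothness estimate gives the claimed inequality. The main obstacle I anticipate is precisely this non-monotone sub-case: one must identify the right extrapolation direction---from $\vo$ toward $\vz$---and certify that extrapolating all the way to $t = 1/c > 1$ remains inside $[0,1]^n$, which is exactly what makes the bound $\vy^{(\eps i)} \leq c\,\vone$ from Lemma~\ref{lem:y_bound} essential.
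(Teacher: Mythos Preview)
Your proposal is correct and follows essentially the same route as the paper: the smoothness/optimality reduction to $\inner{\vo \odot (\vone - \vy^{(\eps i)}), \nabla F(\vy^{(\eps i)})}$, the intermediate bound $H(\vz) - H(\vy^{(\eps i)})$, and the extrapolation from $\vo$ along $(\vone - \vo)\odot \vy^{(\eps i)}$ using Lemma~\ref{lem:y_bound} are exactly the paper's steps. The only cosmetic difference is that you treat the non-monotone case for $G$ and $C$ uniformly via concavity of $\phi(t)=H(\vo+t(\vz-\vo))$ along a non-negative direction, whereas the paper writes out the concave convex-combination for $C$ and an equivalent integral inequality for $G$ separately; both encode the same idea and rely on the same feasibility check $\vo + \tfrac{1}{1-(1-\eps)^i}(\vz-\vo)\in[0,1]^n$.
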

\begin{proof}
Observe that
\begin{align*}
	F(\vy^{(\eps(i + 1))}) - F(\vy^{(\eps i)})
	={} &
	F(\vy^{(\eps i)} + \eps \cdot (\vone - \vy^{(\eps i)}) \odot \vs^{(\eps i)}) - F(\vy^{(\eps i)})\\
	={} &
	\int_0^\eps \left. \frac{d F(\vy^{(\eps i)} + z \cdot (\vone - \vy^{(\eps i)}) \odot \vs^{(\eps i)})}{d z} \right|_{z = r} dr\\
	={} &
	\int_0^\eps \inner{(\vone - \vy^{(\eps i)}) \odot \vs^{(\eps i)}, \nabla F(\vy^{(\eps i)} + r \cdot (\vone - \vy^{(t)}) \odot \vs^{(\eps i)})} dr\\
	\geq{} &
	\int_0^\eps \left\{\inner{((\vone - \vy^{(\eps i)}) \odot \vs^{(\eps i)}, \nabla F(\vy^{(\eps i)})} - 2r L D^2 \right\} dr\\
	={} &
	\inner{\eps (\vone - \vy^{(\eps i)}) \odot \vs^{(\eps i)}, \nabla F(\vy^{(\eps i)})} - \eps^2 L D^2
	\enspace.
\end{align*}
Furthermore, we also have
\begin{align*}
	\inner{(\vone - \vy^{(\eps i)}) \odot \vs^{(\eps i)}, \nabla F(\vy^{(\eps i)})}
	={} &
	\inner{\vs^{(\eps i)}, (\vone - \vy^{(\eps i)}) \odot \nabla F(\vy^{(\eps i)})}\\
	\geq{} &
	\inner{\vo, (\vone - \vy^{(\eps i)}) \odot \nabla F(\vy^{(\eps i)})}
	\enspace,
\end{align*}
where the inequality holds since $s^{(\eps i)}$ is the maximizer found by Algorithm~\ref{alg:measured_continuous_greedy}. Combining the last two inequalities, we get
\begin{equation} \label{eq:lower_bound_level_1}
	F(\vy^{(\eps(i + 1))}) - F(\vy^{(\eps i)})
	\geq
	\inner{\eps \vo, (\vone - \vy^{(\eps i)}) \odot \nabla F(\vy^{(\eps i)})} - \eps^2 L D^2
	\enspace.
\end{equation}

Let us now find a lower bound on the expression $\inner{\vo, (\vone - \vy^{(\eps i)}) \odot \nabla F(\vy^{(\eps i)})}$ in the above inequality. Since $C$ is concave,
\begin{align*}
	C(\vo \odot (\vone - \vy^{(\eps i)}) + \vy^{(\eps i)}) - C(\vy^{(\eps i)})
	\leq{} &
	\inner{\vo \odot (\vone - \vy^{(\eps i)}), \nabla C(\vy^{(\eps i)})}\\
	={} &
	\inner{\vo, (\vone - \vy^{(\eps i)}) \odot \nabla C(\vy^{(\eps i)})}
	\enspace.
\end{align*}
Similarly, since $G$ is DR-submodular,
\begin{align*}
	G(\vo \odot (\vone - \vy^{(\eps i)}) + \vy^{(\eps i)}) &{}- G(\vy^{(\eps i)})
	=
	\int_0^1 \left. \frac{d G(\vy^{(\eps i)} + z \cdot \vo \odot (\vone - \vy^{(\eps i)})}{dz} \right|_{z = r} dr\\
	={} &
	\int_0^1 \inner{\vo \odot (\vone - \vy^{(\eps i)}), \nabla G(\vy^{(\eps i)} + r \cdot \vo \odot (\vone - \vy^{(\eps i)}))} dr\\
	\leq{} &
	\inner{\vo \odot (\vone - \vy^{(\eps i)}), \nabla G(\vy^{(\eps i)})}
	\leq
	\inner{\vo, (\vone - \vy^{(t)}) \odot \nabla G(\vy^{(\eps i)})}
	\enspace.
\end{align*}
Adding the last two inequalities provides the promised lower bound on $\inner{\vo, (\vone - \vy^{(t)}) \odot \nabla F(\vy^{(\eps i)})}$. Plugging this lower bound into Inequality~\eqref {eq:lower_bound_level_1} yields
\begin{align*}
	F(\vy^{(\eps(i + 1))}) - F(\vy^{(\eps i)})
	\geq
	\eps \cdot \{&[G(\vo \odot (\vone - \vy^{(\eps i)}) + \vy^{(\eps i)}) - G(\vy^{(\eps i)})] \\& + [C(\vo \odot (\vone - \vy^{(\eps i)}) + \vy^{(\eps i)}) - C(\vy^{(\eps i)})]\} - \eps^2 L D^2
	\enspace.
\end{align*}

Given the last inequality, to prove the lemma it remains to show that $G(\vo \odot (\vone - \vy^{(\eps i)}) + \vy^{(\eps i)}) \geq M(G, i) \cdot G(\vo)$ and $C(\vo \odot (\vone - \vy^{(\eps i)}) + \vy^{(\eps i)}) \geq M(C, i) \cdot C(\vo)$. Since $\vo \odot (\vone - \vy^{(\eps i)}) + \vy^{(\eps i)} \geq \vo$, these inequalities follow immediately when $G$ and $C$ are monotone, respectively. Therefore, we concentrate on proving these inequalities when $G$ and $C$ are non-monotone. Since $C$ is concave,
\begin{align*}
	C(\vo \odot (\vone - {}&\vy^{(\eps i)}) + \vy^{(\eps i)})\\
	={} &
	C\left(\mspace{-6mu}M(C, i) \cdot \vo + (1 - M(C, i)) \cdot \frac{((1 - M(C, i)) \cdot \vone - \vy^{(\eps i)}) \odot \vo + \vy^{(\eps i)}}{1 - M(C, i)}\right)\\
	\geq{} &
	M(C, i) \cdot C(\vo) + (1 - M(C, i)) \cdot C\left(\frac{((1 - M(C, i)) \cdot \vone - \vy^{(\eps i)}) \odot \vo + \vy^{(\eps i)}}{1 - M(C, i)}\right)\\
	\geq{} &
	M(C, i) \cdot C(\vo)
	\enspace,
\end{align*}
where the second inequality follows from the non-negativity of $C$. We also note that
\[
	\frac{((1 - M(C, i)) \cdot \vone - \vy^{(\eps i)}) \odot \vo + \vy^{(\eps i)}}{1 - M(C, i)} \in [0, 1]^n
\]
since $0 \leq \vy^{(\eps i)} \leq (1 - M(C, i)) \cdot \vone$ by Lemma~\ref{lem:y_bound}. Similarly, since $G$ is DR-submodular,
\begin{align*}
	G(\vo \odot (\vone & {} - \vy^{(\eps i)}) + \vy^{(\eps i)})
	=
	G(\vo + (\vone - \vo) \odot \vy^{(\eps i)})\\
	={} &
	G(\vo) + \int_{0}^{1} \left. \frac{dG(\vo + z \cdot (\vone - \vo) \odot \vy^{(\eps i)})}{dz} \right|_{z = r} dr\\
	\geq{} &
	G(\vo) + (1 - M(G, i)) \cdot \int_{0}^{(1 - M(G, i))^{-1}} \left. \frac{dG(\vo + z \cdot (\vone - \vo) \odot \vy^{(\eps i)})}{dz} \right|_{z = r} dr\\
	={} &
	M(G, i) \cdot G(\vo) + (1 - M(G, i)) \cdot G\left(\vo + \frac{1}{1 - M(G, i)} \cdot (\vone - \vo) \odot \vy^{(\eps i)}\right)\\
	\geq{} &
	M(G, i) \cdot G(\vo)
	\enspace,
\end{align*}
where the second inequality follows from the non-negativity of $G$, and the first inequality holds since
\[
	\vo + \frac{1}{1 - M(G, i)} \cdot (\vone - \vo) \odot \vy^{(\eps i)} \in [0, 1]^n
\]
because $0 \leq \vy^{(\eps i)} \leq (1 - M(C, i)) \cdot \vone$ by Lemma~\ref{lem:y_bound}.
\end{proof}

\begin{corollary} \label{cor:measured_continuous_greedy_guarantee}
\begin{equation*}
F(\vy^{(1)})
	\mspace{-3mu}\geq\mspace{-3mu}
	\left\{\begin{array}{ll}
		\mspace{-8mu} 1 - e^{-1} \mspace{-7mu} & \text{if $G$ is monotone} \mspace{-8mu} \\
		\mspace{-8mu} e^{-1} & \text{otherwise}
	\end{array}\right\}
	\cdot G(\vo)
	+
	\left\{\begin{array}{ll}
		\mspace{-8mu} 1 - e^{-1} \mspace{-7mu} & \text{if $C$ is monotone} \mspace{-8mu} \\
		\mspace{-8mu} e^{-1} & \text{otherwise}
	\end{array}\right\}
	\cdot C(\vo) - O(\eps LD^2)
	\mspace{-2mu}\enspace.
\end{equation*}
\end{corollary}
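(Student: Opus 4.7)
The plan is to strengthen the corollary to an inductive statement indexed by $i \in \{0,1,\dotsc,\eps^{-1}\}$, solve the resulting linear recurrences separately in the monotone and non-monotone cases, and then evaluate at $i = \eps^{-1}$. Concretely, for every function $H \in \{G,C\}$ define
\[
	a_H(i) \;=\; \begin{cases} 1 - (1-\eps)^i & \text{if $H$ is monotone,}\\ i\eps (1-\eps)^{i-1} & \text{if $H$ is non-monotone,} \end{cases}
\]
and propose to prove by induction on $i$ that
\[
	F(\vy^{(\eps i)}) \;\geq\; a_G(i) \cdot G(\vo) + a_C(i) \cdot C(\vo) - i \eps^2 L D^2 .
\]
The base case $i=0$ reduces to $F(\vzero) \geq 0$, which follows from the non-negativity of $G$ and $C$ assumed in the section.

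For the inductive step, I would rearrange Lemma~\ref{lem:step_measured_continuous_greedy} into
\[
	F(\vy^{(\eps(i+1))}) \;\geq\; (1-\eps) F(\vy^{(\eps i)}) + \eps \cdot [M(G,i) G(\vo) + M(C,i) C(\vo)] - \eps^2 L D^2,
\]
substitute the induction hypothesis on $F(\vy^{(\eps i)})$, and then verify for $H \in \{G,C\}$ the single-variable recurrence $a_H(i+1) = (1-\eps) a_H(i) + \eps M(H,i)$. In the monotone case $M(H,i)=1$ and one checks $1-(1-\eps)^{i+1} = (1-\eps)[1-(1-\eps)^i] + \eps$. In the non-monotone case $M(H,i)=(1-\eps)^i$ and one checks $(i+1)\eps(1-\eps)^i = (1-\eps)\cdot i\eps(1-\eps)^{i-1} + \eps(1-\eps)^i$. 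The accumulated error telescopes cleanly because $(1-\eps) \cdot i\eps^2 L D^2 + \eps^2 L D^2 \leq (i+1) \eps^2 L D^2$.

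Finally, set $i = \eps^{-1}$. In the monotone case $a_H(\eps^{-1}) = 1 - (1-\eps)^{\eps^{-1}} \geq 1 - e^{-1}$ by the standard inequality $(1-\eps)^{1/\eps} \leq e^{-1}$. In the non-monotone case $a_H(\eps^{-1}) = (1-\eps)^{\eps^{-1}-1}$; the bound $(1-\eps)^{1/\eps - 1} \geq e^{-1}$ holds for every $\eps \in (0,1)$ (one can verify this by observing that $\ln(1-\eps)^{1/\eps - 1} = (1/\eps - 1)\ln(1-\eps) = -1 + \eps/2 + O(\eps^2) \geq -1$ via the Taylor expansion of $\ln(1-\eps)$). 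The total error term is $\eps^{-1} \cdot \eps^2 LD^2 = \eps LD^2 = O(\eps LD^2)$, matching the statement.

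The only genuinely delicate point is the clean inequality $(1-\eps)^{1/\eps - 1} \geq e^{-1}$ for the non-monotone coefficient; besides that, everything is a bookkeeping induction. If one preferred, one could avoid this calibration by tolerating a multiplicative slack of the form $e^{-1} - O(\eps)$ in front of $G(\vo)$ or $C(\vo)$ (absorbed into the existing $O(\eps LD^2)$ error under the mild normalization $G(\vo), C(\vo) = O(LD^2)$), but using the tight Taylor-expansion bound yields the stated corollary exactly.
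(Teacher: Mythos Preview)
Your proof is correct and essentially identical to the paper's: the paper defines $S(H,i)$ exactly as your $a_H(i)$, proves the same inductive inequality $F(\vy^{(\eps i)}) \geq S(G,i)G(\vo) + S(C,i)C(\vo) - i\eps^2 LD^2$ via the same rearrangement of Lemma~\ref{lem:step_measured_continuous_greedy} and the same recurrence check $(1-\eps)S(H,i-1) + \eps M(H,i-1) = S(H,i)$, and then plugs in $i = \eps^{-1}$ using the same two limit inequalities. Your Taylor-expansion justification of $(1-\eps)^{1/\eps - 1} \geq e^{-1}$ is slightly informal (it only literally covers small $\eps$), but the inequality is standard and the paper simply asserts it as well.
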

\begin{proof}
For every function $H$, let $S(H, i)$ be $1 - (1 - \eps)^i$ if $H$ is monotone, and $(\eps i) \cdot (1 - \eps)^{i - 1}$ if $H$ is non-monotone. We prove by induction that for every integer $0 \leq i \leq \eps^{-1}$ we have
\begin{equation} \label{eq:induction_claim_measured}
	F(\vy^{(\eps i)})
	\geq
	S(G, i) \cdot G(\vo) + S(C, i) \cdot C(\vo) - i \cdot \eps^2 LD^2
	\enspace.
\end{equation}
One can note that the corollary follows from this inequality by plugging $i = \eps^{-1}$ since $S(H, \eps^{-1}) = 1 - (1 - \eps)^{\eps^{-1}} \geq 1 - e^{-1}$ when $H$ is monotone and $S(H, \eps^{-1}) = (1 - \eps)^{\eps^{-1} - 1} \geq e^{-1}$ when $H$ is non-monotone.

For $i = 0$, Inequality~\eqref{eq:induction_claim_measured} follows from the non-negativity of $F$ since $S(H, 0) = 0$ both when $H$ is monotone and non-monotone. Next, let us prove Inequality~\eqref{eq:induction_claim_measured} for $1 \leq i \leq \eps^{-1}$ assuming it holds for $i - 1$. We note that $(1 - \eps) \cdot S(H, i - 1) + \eps \cdot M(H, i - 1) = S(H, i)$. For a monotone $H$ this is true since
\[
	(1 - \eps) \cdot S(H, i - 1) + \eps \cdot M(H, i - 1)
	=
	(1 - \eps) \cdot [1 - (1 - \eps)^{i - 1}] + \eps
	=
	1 - (1 - \eps)^{i}
	=
	S(H, i)
	\enspace,
\]
and for a non-monotone $H$ this is true since
\begin{align*}
	(1 - \eps) \cdot S(H, i - 1) + \eps \cdot M(H, i - 1)
	={} &
	(1 - \eps) \cdot (\eps (i - 1)) \cdot (1 - \eps)^{i - 2} + \eps \cdot (1 - \eps)^{i - 1}\\
	={} &
	(\eps i) \cdot (1 - \eps)^{i - 1}
	=
	S(H, i)
	\enspace.
\end{align*}
Using Lemma~\ref{lem:step_measured_continuous_greedy}, we now get
\begin{align*}
	F(\vy^{(\eps i)})
	\geq{} &
	F(\vy^{(\eps (i - 1))}) + \eps \cdot \{[M(G, i - 1) \cdot G(\vo) - G(\vy^{(\eps (i - 1))})] \\&\mspace{200mu}+ [M(C, i - 1) \cdot C(\vo) - C(\vy^{(\eps (i - 1))})]\} - \eps^2 L D^2\\
	={} &
	(1 - \eps) \cdot F(\vy^{(\eps (i - 1))}) + \eps \cdot \{M(G, i - 1) \cdot G(\vo) + M(C, i - 1) \cdot C(\vo)\}- \eps^2 L D^2\\
	\geq{} &
	[(1 - \eps) \cdot S(G, i - 1) \cdot G(\vo) + \eps \cdot M(G, i - 1) \cdot G(\vo)] \\&\mspace{200mu}+ [(1 - \eps) \cdot S(C, i - 1) \cdot C(\vo) + \eps \cdot M(C, i - 1) \cdot C(\vo)]\\
	&\mspace{200mu}- [(1 - \eps) \cdot (i - 1) \cdot \eps^2 LD^2 + \eps^2 L D^2]\\
	\geq{} &
	S(G, i) \cdot G(\vo) + S(C, i) \cdot C(\vo) - i \cdot \eps^2 L D^2
	\enspace,
\end{align*}
where the second inequality holds due to the induction hypothesis.
\end{proof}

We are now ready to state and prove our main theorem for Algorithm~\ref{alg:measured_continuous_greedy}.

\begin{restatable}{theorem}{thmMeasuredContinuousGreedy} \label{thm:measured_continuous_greedy}
Let $S$ be the time it takes to find a point in $P$ maximizing a given liner function, then Algorithm~\ref{alg:measured_continuous_greedy} runs in $O(\eps^{-1}(n + S))$ time, makes $ O(1/\eps )$ calls to the gradient oracle, and outputs a vector $\vy$ such that
\begin{equation*}
	F(\vy)
	\geq
	\left\{\begin{array}{ll}
		\mspace{-8mu} 1 - e^{-1} \mspace{-7mu} & \text{if $G$ is monotone} \mspace{-8mu} \\
		\mspace{-8mu} e^{-1} & \text{otherwise}
	\end{array}\right\}
	\cdot G(\vo)
	+
	\left\{\begin{array}{ll}
		\mspace{-8mu} 1 - e^{-1} \mspace{-7mu} & \text{if $C$ is monotone} \mspace{-8mu} \\
		\mspace{-8mu} e^{-1} & \text{otherwise}
	\end{array}\right\}
	\cdot C(\vo) - O(\eps LD^2)
	\enspace.
\end{equation*}
\end{restatable}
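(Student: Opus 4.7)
The plan is to observe that the approximation guarantee part of the theorem is already in hand via Corollary~\ref{cor:measured_continuous_greedy_guarantee}, so nothing remains there beyond citing it. The output of the algorithm is $\vy^{(1)}$, and the corollary states exactly the lower bound on $F(\vy^{(1)})$ claimed in the theorem, with the correct $1-1/e$ or $1/e$ factor depending on the monotonicity of $G$ and $C$, and with error term $O(\eps L D^2)$. The fact that $\vy^{(1)} \in P$ was already verified in the paragraph following Lemma~\ref{lem:y_bound} using the down-closedness of $P$ together with the bound $\vy^{(1)} \leq \eps \sum_{i=1}^{\eps^{-1}} \vs^{(\eps i)}$.

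The only remaining work is the routine complexity accounting. I would mirror the proof of Theorem~\ref{thm:continuous_greedy}: the while loop runs exactly $\eps^{-1}$ iterations. In each iteration, computing the weighted gradient vector $(\vone - \vy^{(t)}) \odot \nabla F(\vy^{(t)})$ requires $O(n)$ work plus a single call to the gradient oracle; solving the linear maximization over $P$ costs $S$ by assumption; and performing the update $\vy^{(t+\eps)} \gets \vy^{(t)} + \eps \cdot (\vone - \vy^{(t)}) \odot \vs^{(t)}$ also takes $O(n)$ time. Summing yields $O(n + S)$ per iteration and hence $O(\eps^{-1}(n + S))$ total time, along with $O(1/\eps)$ gradient oracle calls.

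There is no real obstacle here: the analytic content (the careful induction showing $F(\vy^{(\eps i)}) \geq S(G,i) \cdot G(\vo) + S(C,i) \cdot C(\vo) - i \cdot \eps^2 LD^2$ and the non-monotone convexity/DR-submodularity manipulations around $\vo \odot (\vone - \vy^{(\eps i)}) + \vy^{(\eps i)}$) has already been done inside Lemma~\ref{lem:step_measured_continuous_greedy} and Corollary~\ref{cor:measured_continuous_greedy_guarantee}. So the proof will be essentially a two-sentence assembly: one sentence invoking the corollary for the value guarantee, and one sentence counting iterations and per-iteration cost for the time and oracle complexity.
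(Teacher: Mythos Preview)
Your proposal is correct and mirrors the paper's own proof almost exactly: the paper simply invokes Corollary~\ref{cor:measured_continuous_greedy_guarantee} for the value guarantee and then, explicitly referencing the analysis of Theorem~\ref{thm:continuous_greedy}, observes that the $\eps^{-1}$ iterations each cost $O(n+S)$ time and one gradient call.
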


\begin{proof}
The output guarantee for Theorem~\ref{thm:measured_continuous_greedy} follows from Corollary~\ref{cor:measured_continuous_greedy_guarantee}. Like in the analysis of Theorem~\ref{thm:continuous_greedy}, the time and oracle complexities follow from the observation that the algorithm performs $\eps^{-1}$ iterations, and each iteration takes $O(n + S)$ time and makes a single call to the gradient oracle.
\end{proof}


\noindent \textbf{Remark:} The guarantees of Algorithms~\ref{alg:continuous_greedy} and~\ref{alg:measured_continuous_greedy} apply in general to different settings. However, both guarantees apply in the special case in which the functions $G$ and $C$ are monotone and the polytope $P$ is down-closed. Interestingly, the two guarantees are identical in this common special case.

\subsection{Gradient Combining Frank-Wolfe Algorithm} \label{section:grad_combfw}

Up to this point, the guarantees of the algorithms that we have seen had both $\alpha$ and $\beta$ that are strictly smaller than $1$. However, since concave functions can be exactly maximized, it is reasonable to expect also algorithms for which the coefficient $\beta$ associated with $C(\vo)$ is equal to $1$. In Sections~\ref{section:grad_combfw} and~\ref{section:nonoblivious_fw}, we describe such algorithms.


In this section, we assume that $G$ is a monotone and non-negative function (in addition to its other properties). The algorithm we study in this section is Algorithm~\ref{alg:frank_wolfe}, and it again takes a quality control parameter $\eps \in (0, 1)$ as input. This time, however, the algorithm assumes that $\eps^{-3}$ is an integer. As usual, if that is not the case, then $\eps$ can be replaced with a value from the range $[\eps/2, \eps]$ that has this property. 

\begin{algorithm}[H]
\caption{\textsc{Gradient Combining Frank-Wolfe} $(\eps)$} \label{alg:frank_wolfe}
\begin{algorithmic}
\STATE Let $\vy^{(0)}$ be a vector in $P$ maximizing $C$ up to an error of $\eta \geq 0$.
\FOR{$i = 1$ \textbf{to} $\eps^{-3}$}
\STATE $\vs^{(i)} \gets \arg \max_{\vx \in P} \inner{\nabla G(\vy^{(i - 1)}) + 2\nabla C(\vy^{(i - 1)}) , \vx}$
\STATE $\vy^{(i)} \gets (1 - \eps^2) \cdot \vy^{(i - 1)} + \eps^2 \cdot \vs^{(i)}$
\ENDFOR
\RETURN{the vector maximizing $F$ among $\{\vy^{(0)},\dotsc,\allowbreak \vy^{(\eps^{-3})}$\}}
\end{algorithmic}
\end{algorithm}

Firstly, note that for every integer $0 \leq i \leq \eps^{-3}$, $\vy^{(i)} \in P$; and therefore, the output of Algorithm~\ref{alg:frank_wolfe} also belongs to $P$. For $i = 0$ this holds by the initialization of $\vy^{(0)}$. For larger values of $i$, this follows by induction because $\vy^{(i)}$ is a convex combination of $\vy^{(i - 1)}$ and the point $\vs^{(i)}$ ($\vy^{(i - 1)}$ belongs to $P$ by the induction hypothesis, and $\vs^{(i)}$ belongs to $P$ by definition).

Our next objective is to lower bound the value of the output point of Algorithm~\ref{alg:frank_wolfe}. For that purpose, it will be useful to define $\bar{F}(\vx) = G(\vx) + 2C(\vx)$ and $H(i) = \bar{F}(\vo) - \bar{F}(\vy^{(i)})$. To get a bound on the value of the output of Algorithm~\ref{alg:frank_wolfe}, we first show that $H(i)$ is small for at least some $i$ value. We do that using the next lemma, which shows that $H(i)$ decreases as a function of $i$ as longs as it is not already small compared to $G(\vy^{i})$. Then, Corollary \ref{cor:low_H_i} guarantees the existence of a good iteration $i^*$.
\begin{lemma} \label{lem:H_decrease}
For every integer $1 \leq i \leq \eps^{-3}$, $H(i - 1) - H(i) \geq \eps^2 \cdot [G(\vo) - 2G(\vy^{(i - 1)})] + 2\eps^2 \cdot [C(\vo) - C(\vy^{(i - 1)})] - 6\eps^4 L D^2 = \eps^2 \cdot [H(i - 1) - G(\vy^{(i - 1)})] - 6\eps^4 L D^2$.
\end{lemma}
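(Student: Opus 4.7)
}
The plan is to work with the auxiliary function $\bar F = G + 2C$ directly, so that $H(i-1) - H(i) = \bar F(\vy^{(i)}) - \bar F(\vy^{(i-1)})$, and then verify the two stated expressions in turn. The second equality is purely algebraic: expanding $H(i-1) = G(\vo) + 2C(\vo) - G(\vy^{(i-1)}) - 2C(\vy^{(i-1)})$ and subtracting $\eps^2 G(\vy^{(i-1)})$ gives exactly $\eps^2[G(\vo) - 2G(\vy^{(i-1)})] + 2\eps^2[C(\vo) - C(\vy^{(i-1)})]$. So the real work is the first inequality. Note that the update of Algorithm~\ref{alg:frank_wolfe} can be written as $\vy^{(i)} = \vy^{(i-1)} + \eps^2 \vd$ with $\vd = \vs^{(i)} - \vy^{(i-1)}$, and that $\|\vd\|_2 \leq 2D$ because both $\vs^{(i)}, \vy^{(i-1)} \in P$.

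The first step is to linearize via smoothness. Since $G$ and $C$ are $L$-smooth, $\bar F$ is $3L$-smooth, so mimicking the integral argument of Lemma~\ref{lem:step_continuous_greedy} I obtain
\[
\bar F(\vy^{(i)}) - \bar F(\vy^{(i-1)})
= \int_0^{\eps^2}\mspace{-10mu}\inner{\vd, \nabla \bar F(\vy^{(i-1)} + r\vd)}\,dr
\geq \eps^2 \inner{\vd, \nabla \bar F(\vy^{(i-1)})} - 6\eps^4 L D^2,
\]
where the error term uses Cauchy--Schwarz together with $\|\vd\|_2^2 \leq 4D^2$ and $3L$-smoothness, giving $\int_0^{\eps^2} 12rLD^2\,dr = 6\eps^4 LD^2$. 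Next, since $\vs^{(i)}$ is the maximizer of $\inner{\nabla \bar F(\vy^{(i-1)}), \vx}$ over $P$ and $\vo \in P$, one can replace $\vs^{(i)}$ by $\vo$ in this inner product, yielding $\inner{\vd, \nabla \bar F(\vy^{(i-1)})} \geq \inner{\vo - \vy^{(i-1)}, \nabla \bar F(\vy^{(i-1)})}$.

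It remains to lower-bound $\inner{\vo - \vy^{(i-1)}, \nabla G(\vy^{(i-1)})}$ and $\inner{\vo - \vy^{(i-1)}, \nabla C(\vy^{(i-1)})}$ separately. For $C$, concavity immediately gives $\inner{\vo - \vy^{(i-1)}, \nabla C(\vy^{(i-1)})} \geq C(\vo) - C(\vy^{(i-1)})$. For $G$, the proof of Lemma~\ref{lem:step_continuous_greedy} already shows that monotonicity plus DR-submodularity imply $\inner{\vo, \nabla G(\vy^{(i-1)})} \geq G(\vo) - G(\vy^{(i-1)})$. However, unlike there, I now need to subtract off $\inner{\vy^{(i-1)}, \nabla G(\vy^{(i-1)})}$ since the update direction is $\vs^{(i)} - \vy^{(i-1)}$ rather than $\vs^{(i)}$ itself. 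This is the main obstacle. I would dispatch it by integrating along the segment from $\vzero$ to $\vy^{(i-1)}$ and invoking DR-submodularity in the form $\nabla G(r\vy^{(i-1)}) \geq \nabla G(\vy^{(i-1)})$ for $r \in [0,1]$, which yields
\[
G(\vy^{(i-1)}) - G(\vzero)
= \int_0^1 \inner{\vy^{(i-1)}, \nabla G(r\vy^{(i-1)})}\,dr
\geq \inner{\vy^{(i-1)}, \nabla G(\vy^{(i-1)})}.
\]
Using $G(\vzero) \geq 0$, this gives $\inner{\vy^{(i-1)}, \nabla G(\vy^{(i-1)})} \leq G(\vy^{(i-1)})$, and subtracting from the previous bound produces $\inner{\vo - \vy^{(i-1)}, \nabla G(\vy^{(i-1)})} \geq G(\vo) - 2G(\vy^{(i-1)})$. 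This is precisely where the suboptimal factor of $2$ in front of $G(\vy^{(i-1)})$ in the lemma comes from.

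Assembling everything, multiplying the $C$-bound by the factor of $2$ coming from $\bar F = G + 2C$, and combining with the smoothness step gives
\[
H(i-1) - H(i) \geq \eps^2\bigl[G(\vo) - 2G(\vy^{(i-1)})\bigr] + 2\eps^2\bigl[C(\vo) - C(\vy^{(i-1)})\bigr] - 6\eps^4 L D^2,
\]
completing the first inequality; the second form then follows by the algebraic identity observed at the outset.
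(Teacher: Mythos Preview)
Your proof is correct and follows the same overall structure as the paper's: linearize $\bar F(\vy^{(i)}) - \bar F(\vy^{(i-1)})$ via $3L$-smoothness of $\bar F$ to pick up the $6\eps^4 LD^2$ error, swap $\vs^{(i)}$ for $\vo$ by optimality, use concavity for the $C$-part, and then establish $\inner{\vo - \vy^{(i-1)}, \nabla G(\vy^{(i-1)})} \geq G(\vo) - 2G(\vy^{(i-1)})$.

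The only genuine difference is in how you prove that last $G$-inequality. The paper decomposes $\vo - \vy^{(i-1)} = (\vo \vee \vy^{(i-1)} - \vy^{(i-1)}) + (\vo \wedge \vy^{(i-1)} - \vy^{(i-1)})$ into a non-negative and a non-positive part, integrates each along its own segment using DR-submodularity, and obtains $G(\vo \vee \vy^{(i-1)}) + G(\vo \wedge \vy^{(i-1)}) - 2G(\vy^{(i-1)})$, which is then bounded below by $G(\vo) - 2G(\vy^{(i-1)})$ via monotonicity and non-negativity. You instead split the inner product as $\inner{\vo,\nabla G(\vy^{(i-1)})} - \inner{\vy^{(i-1)},\nabla G(\vy^{(i-1)})}$, reuse the bound from Lemma~\ref{lem:step_continuous_greedy} for the first term, and control the second by integrating along $[\vzero,\vy^{(i-1)}]$ and invoking $G(\vzero)\geq 0$. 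Your route is slightly more elementary (it avoids the lattice operations entirely), while the paper's $\vee/\wedge$ decomposition is the more standard submodular-analysis maneuver; both rely on the same ingredients---monotonicity, non-negativity, and the antitone gradient property of DR-submodular functions---and land on the identical bound.
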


\begin{proof}
Observe that
\begin{align*}
	H(i - 1) - H(i)
	={} &
	\bar{F}(\vy^{(i)}) - \bar{F}(\vy^{(i - 1)})
	=
	\bar{F}((1 - \eps^2) \cdot \vy^{(i - 1)} + \eps^2 \cdot \vs^{(i)}) - \bar{F}(\vy^{(i - 1)})\\
	={} &
	\int_0^{\eps^2} \left. \frac{\bar{F}((1 - z) \cdot \vy^{(i - 1)} + z \cdot \vs^{(i)})}{dz}\right|_{z = r} dr\\
	={} &
	\int_0^{\eps^2} \inner{\vs^{(i)} - \vy^{(i - 1)}, \nabla \bar{F}((1 - r) \cdot \vy^{(i - 1)} + r \cdot \vs^{(i)})} dr\\
	\geq{} &
	\int_0^{\eps^2} \left[\inner{\vs^{(i)} - \vy^{(i - 1)}, \nabla \bar{F}(\vy^{(i - 1)})} - 12rLD^2\right] dr\\
	={} &
	\eps^2 \cdot \inner{\vs^{(i)} - \vy^{(i - 1)}, \nabla \bar{F}(\vy^{(i - 1)})} - 6\eps^4LD^2
	\enspace.
\end{align*}
To make the expression on the rightmost side useful, we need to lower bound the inner product $\inner{\vs^{(i)} - \vy^{(i - 1)}, \nabla \bar{F}(\vy^{(i - 1)})}$.
\begin{align*}
	\inner{\vs^{(i)} - \vy^{(i - 1)}, \nabla \bar{F}(\vy^{(i - 1)})}\mspace{-80mu}&\\
={} &
	\inner{\vs^{(i)}, \nabla G(\vy^{(i - 1)}) + 2\nabla C(\vy^{(i - 1)})} - \inner{\vy^{(i - 1)}, \nabla G(\vy^{(i - 1)}) + 2\nabla C(\vy^{(i - 1)})}\\
	\geq{} &
	\inner{\vo, \nabla G(\vy^{(i - 1)}) + 2\nabla C(\vy^{(i - 1)})} - \inner{\vy^{(i - 1)}, \nabla G(\vy^{(i - 1)}) + 2\nabla C(\vy^{(i - 1)})}\\
	={} &
	\inner{\vo - \vy^{(i - 1)}, \nabla G(\vy^{(i - 1)})} + \inner{2(\vo - \vy^{(i - 1)}), \nabla C(\vy^{(i - 1)})}
	\enspace,
\end{align*}
where the inequality holds since $\vs^{(i)}$ is the maximizer found by Algorithm~\ref{alg:frank_wolfe}. Combining the last two inequalities yields
\[
	H(i - 1) - H(i)
	\geq
	\inner{\eps^2 (\vo - \vy^{(i - 1)}), \nabla G(\vy^{(i - 1)})} + \inner{2\eps^2 (\vo - \vy^{(i - 1)}), \nabla C(\vy^{(i - 1)})} - 6\eps^4 L D^2
	\enspace.
\]
Therefore, to complete the proof of the lemma it remains to prove that $\inner{\vo - \vy^{(i - 1)}, \nabla G(\vy^{(i - 1)})} \geq G(\vo) - 2G(\vy^{(i - 1)})$ and $\inner{\vo - \vy^{(i - 1)}, \nabla C(\vy^{(i - 1)})} \geq C(\vo) - C(\vy^{(i - 1)})$.

The inequality $\inner{\vo - \vy^{(i - 1)}, \nabla C(\vy^{(i - 1)})} \geq C(\vo) - C(\vy^{(i - 1)})$ follows immediately from the concavity of $C$. To prove the other inequality, observe that the DR-submodularity of $G$ implies $\inner{\vx^{(1)}, \nabla G(\vx^{(2)})} \geq \inner{\vx^{(1)}, \nabla G(\vx^{(1)} + \vx^{(2)})}$ for every two vectors $\vx^{(1)}$ and $\vx^{(2)}$ that obey $\vx^{(2)}  \in [0, 1]^n$, $\vx^{(1)} + \vx^{2} \in [0, 1]^n$ and $\vx^{(1)}$ is either non-negative or non-positive. This observation implies
{\allowdisplaybreaks\begin{align*}
	&
	\inner{\vo - \vy^{(i - 1)}, \nabla G(\vy^{(i - 1)})}\\
	={} &
	\inner{\vo \vee \vy^{(i - 1)} - \vy^{(i - 1)}, \nabla G(\vy^{(i - 1)})} + \inner{\vo \wedge \vy^{(i - 1)} - \vy^{(i - 1)}, \nabla G(\vy^{(i - 1)})}\\
	\geq{} &
	\int_0^1 \left[\inner{\vo \vee \vy^{(i - 1)} - \vy^{(i - 1)}, \nabla G(\vy^{(i - 1)} + r \cdot (\vo \vee \vy^{(i - 1)} - \vy^{(i - 1)}))} \right.\\&\left.+ \inner{\vo \wedge \vy^{(i - 1)} - \vy^{(i - 1)}, \nabla G(\vy^{(i - 1)} + r \cdot (\vo \wedge \vy^{(i - 1)} - \vy^{(i - 1)}))}\right] dr\\
	={} &
	\int_0^1 \left. \frac{dG(\vy^{(i - 1)} + z \cdot (\vo \vee \vy^{(i - 1)} - \vy^{(i - 1)})) + dG(\vy^{(i - 1)} + z \cdot (\vo \wedge \vy^{(i - 1)} - \vy^{(i - 1)}))}{dz}\right|_{z = r} dr\\
	={} &
	[G(\vo \vee \vy^{(i - 1)}) + G(\vo \wedge \vy^{(i - 1)})] - 2G(\vy^{(i - 1)})
	\geq
	G(\vo) - 2G(\vy^{(i - 1)})
	\enspace,
\end{align*}}
where the last inequality follows from the monotonicity and non-negativity of $G$.
\end{proof}

\begin{corollary} \label{cor:low_H_i}
There is an integer $0 \leq i^* \leq \eps^{-3}$ obeying $H(i^*) \leq G(\vy^{(i^*)}) + \eps \cdot [G(\vo) + 2\eta + 6 L D^2]$.
\end{corollary}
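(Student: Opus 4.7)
I would proceed by contradiction: suppose that for every integer $0 \leq i \leq \eps^{-3}$ we have
\[
	H(i) > G(\vy^{(i)}) + \eps \cdot [G(\vo) + 2\eta + 6 L D^2],
\]
and derive an absurdity by showing that $H$ must then decrease so rapidly over $\eps^{-3}$ iterations that it drops below $0$, which contradicts the same hypothesis at $i = \eps^{-3}$.

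\textbf{Step 1 (per-step decrease).} For every $1 \leq i \leq \eps^{-3}$, I plug the contradiction hypothesis at index $i-1$ into the second form of Lemma~\ref{lem:H_decrease}, namely $H(i-1) - H(i) \geq \eps^2 [H(i-1) - G(\vy^{(i-1)})] - 6\eps^4 L D^2$. This yields
\[
	H(i-1) - H(i) > \eps^3 \cdot [G(\vo) + 2\eta + 6LD^2] - 6\eps^4 L D^2 \geq \eps^3 \cdot [G(\vo) + 2\eta],
\]
where the last inequality drops the non-negative term $6\eps^3 L D^2 (1 - \eps)$.

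\textbf{Step 2 (telescoping).} Summing the previous inequality over $i = 1, \dots, \eps^{-3}$ gives
\[
	H(0) - H(\eps^{-3}) > \eps^{-3} \cdot \eps^3 \cdot [G(\vo) + 2\eta] = G(\vo) + 2\eta.
\]

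\textbf{Step 3 (bound on $H(0)$).} Because $\vy^{(0)}$ maximizes $C$ over $P$ up to error $\eta$, we have $C(\vy^{(0)}) \geq C(\vo) - \eta$. Combined with $G(\vy^{(0)}) \geq 0$, this yields
\[
	H(0) = G(\vo) + 2C(\vo) - G(\vy^{(0)}) - 2C(\vy^{(0)}) \leq G(\vo) + 2\eta.
\]
Together with Step 2 this forces $H(\eps^{-3}) < 0$.

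\textbf{Step 4 (contradiction).} On the other hand, the contradiction hypothesis applied at $i = \eps^{-3}$ gives $H(\eps^{-3}) > G(\vy^{(\eps^{-3})}) + \eps \cdot [G(\vo) + 2\eta + 6LD^2] \geq 0$, using the non-negativity of $G$ and of all other terms. This contradicts $H(\eps^{-3}) < 0$, proving the corollary.

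\textbf{Main obstacle.} The argument is almost entirely bookkeeping once one has the right contradiction hypothesis; the only delicate step is choosing the threshold $\eps \cdot [G(\vo) + 2\eta + 6LD^2]$ so that (i) the $6LD^2$ summand is large enough to swallow the $-6\eps^4 L D^2$ error term from Lemma~\ref{lem:H_decrease} at each step, and (ii) the $G(\vo) + 2\eta$ summands, after being multiplied by $\eps^3$ and summed $\eps^{-3}$ times, exactly cancel out the initial slack $H(0) \leq G(\vo) + 2\eta$ coming from the $\eta$-approximate concave maximizer. Verifying these two balances is the only real content of the proof.
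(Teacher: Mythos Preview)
Your proof is correct and follows essentially the same contradiction argument as the paper: assume the bound fails at every index, use Lemma~\ref{lem:H_decrease} to get a uniform per-step drop, telescope, and contradict the bound $H(0) \leq G(\vo) + 2\eta$ coming from the choice of $\vy^{(0)}$. The only cosmetic difference is that you absorb the $6\eps^4 L D^2$ error into the threshold already in Step~1 (yielding $H(\eps^{-3}) < 0$), whereas the paper carries it through the telescoping and only cancels it at the very end (obtaining $H(\eps^{-3}) \leq 6\eps L D^2$ before invoking the contradiction hypothesis); both routes arrive at the same contradiction.
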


\begin{proof}
Assume towards a contradiction that the corollary does not hold. By Lemma~\ref{lem:H_decrease}, this implies
\[
	H(i - 1) - H(i)
	\geq
	\eps^2 \cdot [H(i - 1) - G(\vy^{(i - 1)})] - 6\eps^4 L D^2
	\geq
	\eps^3 \cdot [G(\vo) + 2\eta] - 6\eps^4 L D^2
\]
for every integer $1 \leq i \leq \eps^{-3}$. Adding up this inequality for all these values of $i$, we get
\[
	H(0) - H(\eps^{-3})
	\geq
	[G(\vo) + 2\eta] - 6\eps L D^2
	\enspace.
\]
However, by the choice of $\vy^{(0)}$, $H(0)$ is not too large. Specifically,
\[
	H(0)
	=
	[G(\vo) + 2C(\vo)] - [G(\vy^{(0)}) + 2C(\vy^{(0)})]
	\leq
	[G(\vo) + 2C(\vo)] - \{0 + 2[C(\vo) - \eta]\}
	=
	G(\vo) + 2\eta
	\mspace{-1mu}\enspace,
\]
where the inequality holds since $\vo$ is one possible candidate to be $\vy^{(0)}$ and $G$ is non-negative. Therefore, we get
\[
	[G(\vo) + 2\eta] - H(\eps^{-3})
	\geq
	[G(\vo) + 2\eta] - 6\eps L D^2
	\enspace,
\]
which by the non-negativity of $G$ and $\eta$ implies
\[
	H(\eps^{-3})
	\leq
	6\eps L D^2
	\leq
	G(\eps^{-3}) + \eps \cdot [G(\vo) + 2\eta + 6 L D^2]
\]
(which contradicts our assumption).
\end{proof}

We are now ready to summarize the properties of Algorithm~\ref{alg:frank_wolfe} in a theorem.
\begin{theorem} \label{thm:frank_wolfe_analysis}
Let $S_1$ be the time it takes to find a point in $P$ maximizing a given linear function and $S_2$ be the time it takes to find a point in $P$ maximizing $C(\cdot)$ up to an error of $\eta$, then Algorithm~\ref{alg:frank_wolfe} runs in $O(\eps^{-3} \cdot (n + S_1) + S_2)$ time, makes $ O(1 / \eps^{3})$ gradient oracle calls, and outputs a vector $\vy$ such that
\[
	F(\vy)
	\geq
	\tfrac{1}{2}(1 - \eps) \cdot G(\vo) + C(\vo) - \eps \cdot O(\eta + L D^2)
	\enspace.
\]
\end{theorem}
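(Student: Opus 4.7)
The plan is to combine Corollary~\ref{cor:low_H_i} with the definition of $H(\cdot)$ to directly derive the claimed value bound, and then do a simple bookkeeping argument for the time and oracle complexities. All the heavy lifting (smoothness slack, the inner product bound via DR-submodularity, and the telescoping across the $\eps^{-3}$ iterations) has already been done in Lemma~\ref{lem:H_decrease} and Corollary~\ref{cor:low_H_i}, so the remaining task is essentially algebraic.

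For the value guarantee, I would start by unfolding $H(i^*)=G(\vo)+2C(\vo)-G(\vy^{(i^*)})-2C(\vy^{(i^*)})$ and substituting this into the inequality given by Corollary~\ref{cor:low_H_i}. This yields
\[
G(\vo)+2C(\vo)-G(\vy^{(i^*)})-2C(\vy^{(i^*)})\leq G(\vy^{(i^*)})+\eps\cdot[G(\vo)+2\eta+6LD^2],
\]
which after moving the $G(\vy^{(i^*)})$ terms to the right-hand side and dividing by $2$ becomes
\[
G(\vy^{(i^*)})+C(\vy^{(i^*)})\;\geq\;\tfrac{1}{2}(1-\eps)\cdot G(\vo)+C(\vo)-\tfrac{\eps}{2}\cdot[2\eta+6LD^2].
\]
The left-hand side is exactly $F(\vy^{(i^*)})$, and the error term on the right-hand side is $\eps\cdot O(\eta+LD^2)$, matching the statement. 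Since Algorithm~\ref{alg:frank_wolfe} returns the iterate $\vy$ among $\{\vy^{(0)},\dotsc,\vy^{(\eps^{-3})}\}$ that maximizes $F$, we have $F(\vy)\geq F(\vy^{(i^*)})$, which concludes the approximation bound.

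For the complexity, the initialization step produces $\vy^{(0)}$ in time $S_2$ (by assumption), the \textbf{for} loop performs $\eps^{-3}$ iterations, and in each iteration the dominant costs are (i) computing $\nabla G(\vy^{(i-1)})$ and $\nabla C(\vy^{(i-1)})$ via the first-order oracles (a constant number of calls), (ii) solving a single linear optimization over $P$ in time $S_1$, and (iii) performing $O(n)$ arithmetic work to form the combined gradient and update $\vy^{(i)}$. Summing, the total running time is $O(\eps^{-3}(n+S_1)+S_2)$ and the number of gradient-oracle calls is $O(1/\eps^3)$. The final selection of the best iterate can be folded into the loop by maintaining a running maximum, so it does not increase the order of the complexity.

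The only place where a small subtlety could appear is verifying that the constant hidden inside the $O(\eta+LD^2)$ matches; but this is just tracking that $\tfrac{\eps}{2}(2\eta+6LD^2)=\eps(\eta+3LD^2)$, so no genuine obstacle arises. I do not expect any conceptual difficulty in this proof: the substantive work is already inside Lemma~\ref{lem:H_decrease} (the per-step decrease of $H$, including the use of the DR-submodularity inequality $\inner{\vx^{(1)},\nabla G(\vx^{(2)})}\geq\inner{\vx^{(1)},\nabla G(\vx^{(1)}+\vx^{(2)})}$ together with monotonicity and non-negativity of $G$) and Corollary~\ref{cor:low_H_i} (the averaging/contradiction argument that leverages the near-optimality of $\vy^{(0)}$ with respect to $C$). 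The theorem itself simply harvests those results.
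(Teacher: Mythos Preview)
Your proposal is correct and follows essentially the same approach as the paper: unfold $H(i^*)$ from Corollary~\ref{cor:low_H_i}, rearrange and divide by $2$ to obtain $F(\vy^{(i^*)})\geq\tfrac{1}{2}(1-\eps)G(\vo)+C(\vo)-\eps(\eta+3LD^2)$, then invoke that the algorithm returns the best iterate; the complexity argument is likewise identical.
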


\begin{proof}

We begin the proof by analyzing the time and oracle complexities of Algorithm~\ref{alg:frank_wolfe}. Every iteration of the loop of Algorithm~\ref{alg:frank_wolfe} takes $O(n + S_1)$ time. As there are $\eps^{-3}$ such iterations, the entire algorithm runs in $O(\eps^{-3} (n + S_1) + S_2)$ time. Also note that each iteration of the loop requires 2 calls to the gradient oracles (a single call to the oracle corresponding to $G$, and a single call to the oracle corresponding to $C$), so the overall oracle complexity of the algorithm is $O(1/\eps^{3})$.

Consider now iteration $i^*$, whose existence is guaranteed by Corollary~\ref{cor:low_H_i}. Then,
\begin{align*}
	&
	H(i^*) \leq G(\vy^{(i^*)}) + \eps \cdot [G(\vo) + 2\eta + 6 L D^2]\\
	\implies{} &
	[G(\vo) + 2C(\vo)] - [G(\vy^{(i^*)}) + 2C(\vy^{(i^*)})] \leq G(\vy^{(i^*)})
	+ \eps \cdot [G(\vo) + 2\eta + 6 L D^2]\\
	\implies{} &
	(1 - \eps) \cdot G(\vo) + 2C(\vo) \leq 2[G(\vy^{(i^*)}) + C(\vy^{(i^*)})] + \eps \cdot [2\eta + 6 L D^2]\\
	\implies{} &
	F(\vy^{(i^*)}) \geq \tfrac{1}{2}(1 - \eps) \cdot G(\vo) + C(\vo) - \eps \cdot [\eta + 3 L D^2]
	\enspace.
\end{align*}
The theorem now follows since the output of Algorithm~\ref{alg:frank_wolfe} is at least as good as $\vy^{(i^*)}$.
\end{proof}

\subsection{Non-oblivious Frank-Wolfe Algorithm} \label{section:nonoblivious_fw}

As mentioned in the beginning of Section~\ref{section:grad_combfw}, our objective in this section is to present another algorithm that has $\beta = 1$ (\ie, it maximizes $C$ ``exactly'' in some sense). In Section~\ref{section:grad_combfw}, we presented Algorithm \ref{alg:frank_wolfe}, which achieves this goal with $\alpha = 1/2$. The algorithm we present in the current section achieves the same goal with an improved value of $1 - 1/e$ for $\alpha$. However, the improvement is obtained at the cost of requiring the function $C$ to be non-negative (which was not required in Section~\ref{section:grad_combfw}). Additionally, like in the previous section, we assume here that $G$ is a monotone and non-negative function (in addition to its other properties).

The algorithm we study in this section is a non-oblivious variant of the Frank-Wolfe algorithm, appearing as Algorithm~\ref{alg:non_ob_fw}, which takes a quality control parameter $\eps \in (0, 1/4)$ as input. As usual, we assume without loss of generality that $\eps^{-1}$ is an integer. Algorithm~\ref{alg:non_ob_fw} also employs the non-negative auxiliary function:
\[
	\bar{G}(\vx)
	=
	\eps \cdot \sum_{j = 1}^{\eps^{-1}} \frac{e^{\eps j} \cdot G(\eps j \cdot \vx)}{\eps j}
	.
\]
This function is inspired by the non-oblivious objective function used by~\citet{filmus2012matroid}.

\begin{algorithm}[H] 
\caption{\textsc{Non-Oblivious Frank-Wolfe} $(\eps)$} \label{alg:non_ob_fw}
\begin{algorithmic}
\STATE Let $\vy^{(0)}$ be an arbitrary vector in $P$, and let $\beta(\eps) \gets e(1 - \ln \eps)$.
\FOR{$i = 0$ \textbf{to} $\lceil e^{-1} \cdot \beta(\eps) / \eps^2 \rceil$}
\STATE $\vs^{(i)} \gets \arg \max_{\vx \in P} \inner{ e^{-1} \! \cdot \! \nabla \bar{G}(\vy^{(i)}) + \nabla C(\vy^{(i)}) , \vx}$
\STATE $\vy^{(i + 1)} \gets (1 - \eps) \cdot \vy^{(i)} + \eps \cdot \vs^{(i)}$
\ENDFOR
\RETURN{the vector maximizing $F$ among $\{\vy^{(0)},\dotsc,\allowbreak \vy^{(\lceil e^{-1} \cdot \frac{\beta(\eps)}{\eps^2} \rceil)}$\}}
\end{algorithmic}
\end{algorithm}

Note that any call to the gradient oracle of $\bar{G}$ can be simulated using $\eps^{-1}$ calls to the gradient oracle of $G$.
The properties of Algorithm~\ref{alg:non_ob_fw} are stated in Theorem~\ref{thm:non_ob_fw}. 
We begin the analysis of Algorithm~\ref{alg:non_ob_fw} by observing that for every integer $0 \leq i \leq \lceil e^{-1} \cdot \beta(\eps)/\eps^2 \rceil$, $\vy^{(i)} \in P$; and therefore, the output of Algorithm~\ref{alg:non_ob_fw} also belongs to $P$. The proof for this is identical to the corresponding proof in Section~\ref{section:grad_combfw}.

Let us now analyze the auxiliary function $\bar{G}$. Specifically, we need to show that $\bar{G}$ is almost as smooth as the original function $G$, and that for every given vector $\vx \in [0, 1]$, the value of $\bar{G}(\vx)$ is not very large compared to $G(\vx)$. We mention these as observations below.

\begin{observation}\label{obs:G_smooth}
The auxiliary function $\bar{G}$ is $eL$-smooth.
\end{observation}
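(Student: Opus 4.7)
The plan is to compute $\nabla \bar{G}$ explicitly, then use the $L$-smoothness of $G$ together with the triangle inequality to bound $\|\nabla \bar{G}(\va) - \nabla \bar{G}(\vb)\|_2$, and finally estimate the resulting scalar sum by exploiting the fact that $\eps j \leq 1$ in every summand.

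First I would differentiate under the sum. Since $\nabla_{\vx} G(\eps j \cdot \vx) = \eps j \cdot \nabla G(\eps j \cdot \vx)$ by the chain rule, the factor $\eps j$ in the denominator cancels and we get the clean expression
\[
\nabla \bar{G}(\vx) \;=\; \eps \sum_{j=1}^{\eps^{-1}} e^{\eps j} \, \nabla G(\eps j \cdot \vx).
\]
Then, for any $\va, \vb \in [0,1]^n$, the triangle inequality combined with the $L$-smoothness of $G$ (applied to the points $\eps j \cdot \va, \eps j \cdot \vb \in [0,1]^n$) gives
\[
\|\nabla \bar{G}(\va) - \nabla \bar{G}(\vb)\|_2 \;\leq\; \eps \sum_{j=1}^{\eps^{-1}} e^{\eps j} \cdot L \cdot \|\eps j \cdot \va - \eps j \cdot \vb\|_2 \;=\; L\|\va - \vb\|_2 \cdot \eps^2 \sum_{j=1}^{\eps^{-1}} j\, e^{\eps j}.
\]

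It then remains to show $\eps^2 \sum_{j=1}^{\eps^{-1}} j\, e^{\eps j} \leq e$. Since $j \leq \eps^{-1}$, we have $e^{\eps j} \leq e$, so
\[
\eps^2 \sum_{j=1}^{\eps^{-1}} j\, e^{\eps j} \;\leq\; e \cdot \eps^2 \sum_{j=1}^{\eps^{-1}} j \;=\; e \cdot \eps^2 \cdot \frac{\eps^{-1}(\eps^{-1}+1)}{2} \;=\; e \cdot \frac{1+\eps}{2} \;\leq\; e,
\]
where the last step uses $\eps \leq 1$. Combining this with the previous display yields $\|\nabla \bar{G}(\va) - \nabla \bar{G}(\vb)\|_2 \leq eL\|\va - \vb\|_2$, as required.

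There is no real obstacle here; the proof is essentially a mechanical differentiation followed by a one-line Riemann-sum-style estimate. The only point requiring care is making sure the crude bound $e^{\eps j} \leq e$ is tight enough to recover the advertised smoothness constant $eL$ rather than something larger, which it is precisely because $\sum_{j=1}^{\eps^{-1}} j = \eps^{-1}(\eps^{-1}+1)/2$ produces the cancelling factor $(1+\eps)/2 \leq 1$.
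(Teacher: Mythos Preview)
Your proof is correct and follows essentially the same approach as the paper: both compute $\nabla\bar G$, apply the triangle inequality and the $L$-smoothness of $G$ at the points $\eps j\cdot \va,\ \eps j\cdot \vb$, and then bound $e^{\eps j}\le e$. The only cosmetic difference is that you cancel the chain-rule factor $\eps j$ before summing (leading to the estimate $\eps^2\sum_j j\,e^{\eps j}\le e(1+\eps)/2\le e$), whereas the paper keeps the $1/(\eps j)$ in the gradient expression and lets it cancel against the $\eps j$ coming from $\|\eps j\cdot\va-\eps j\cdot\vb\|_2$ inside the sum.
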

\begin{proof}
For every two vectors $\vx, \vy \in [0, 1]^n$,
\begin{align*}
	\|\nabla& \bar{G}(\vx) - \nabla \bar{G}(\vy)\|_2
	=
	\left\|\eps \cdot \sum_{j = 1}^{\eps^{-1}} \frac{e^{\eps j} \cdot [\nabla G(\eps j \cdot \vx) - \nabla G(\eps j \cdot \vy)]}{\eps j}\right\|_2\\
	\leq{} &
	\eps \cdot \sum_{j = 1}^{\eps^{-1}} \frac{e^{\eps j} \cdot \|\nabla G(\eps j \cdot \vx) - \nabla G(\eps j \cdot \vy)\|_2}{\eps j}
	\leq
	\eps \cdot \sum_{j = 1}^{\eps^{-1}} \frac{e \cdot L\eps j \cdot \|\vx - \vy\|_2}{\eps j}
	=
	eL \cdot \|\vx - \vy\|_2
	\enspace.
	\qedhere
\end{align*}
\end{proof}

\begin{observation} \label{obs:G_bar_range}
For every vector $\vx \in [0, 1]^n$, $\bar{G}(\vx) \leq \beta(\eps) \cdot G(\vx)$. 
\end{observation}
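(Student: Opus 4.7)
The plan is to bound the summands in the definition of $\bar{G}$ one by one and then handle the resulting numerical series. The two structural facts about $G$ that I need are its monotonicity (together with non-negativity), which controls the values $G(\eps j \cdot \vx)$, and nothing more: no DR-submodularity or smoothness is required for this observation.

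First, I would use monotonicity of $G$. Since $\vx \in [0,1]^n$ and $\eps j \leq 1$ for every $1 \leq j \leq \eps^{-1}$, we have $\eps j \cdot \vx \leq \vx$ coordinate-wise, so $G(\eps j \cdot \vx) \leq G(\vx)$. Plugging this into the definition of $\bar{G}$ pulls $G(\vx)$ out of the sum:
\[
\bar{G}(\vx) \;\leq\; G(\vx) \cdot \eps \cdot \sum_{j = 1}^{\eps^{-1}} \frac{e^{\eps j}}{\eps j} \;=\; G(\vx) \cdot \sum_{j = 1}^{\eps^{-1}} \frac{e^{\eps j}}{j}.
\]

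Next I would control the purely numerical sum on the right. Since $\eps j \leq 1$ for $j \leq \eps^{-1}$, we have $e^{\eps j} \leq e$, so
\[
\sum_{j = 1}^{\eps^{-1}} \frac{e^{\eps j}}{j} \;\leq\; e \cdot \sum_{j = 1}^{\eps^{-1}} \frac{1}{j} \;\leq\; e \cdot (1 + \ln \eps^{-1}) \;=\; e(1 - \ln \eps) \;=\; \beta(\eps),
\]
where the second inequality is the standard bound $\sum_{j=1}^N 1/j \leq 1 + \ln N$ (with $N = \eps^{-1}$). Chaining the two displays gives $\bar{G}(\vx) \leq \beta(\eps) \cdot G(\vx)$, as desired.

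There is essentially no obstacle here: the observation reduces almost immediately to the harmonic-sum estimate once monotonicity is invoked. The only mild subtlety is ensuring that $\eps j \in [0,1]$ throughout the summation so that both $e^{\eps j} \leq e$ and $\eps j \cdot \vx \in [0,1]^n$ are valid — both follow from the index range $1 \leq j \leq \eps^{-1}$ and the assumption $\vx \in [0,1]^n$.
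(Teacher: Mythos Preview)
Your proof is correct and follows essentially the same approach as the paper's own proof: both use monotonicity of $G$ to bound $G(\eps j \cdot \vx) \leq G(\vx)$, then $e^{\eps j} \leq e$, and finally the standard harmonic-sum estimate $\sum_{j=1}^{\eps^{-1}} 1/j \leq 1 - \ln \eps$.
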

\begin{proof}
Note that, by the monotonicity of $G$,
\begin{align*}
	\bar{G}(\vx)
	={} &
	\eps \cdot \sum_{j = 1}^{\eps^{-1}} \frac{e^{\eps j} \cdot G(\eps j \cdot \vx)}{\eps j}
	\leq
	\eps \cdot \sum_{j = 1}^{\eps^{-1}} \frac{e \cdot G(\vx)}{\eps j}\\
	={} &
	e \cdot G(\vx) \cdot \sum_{j = 1}^{\eps^{-1}} \frac{1}{j}
	\leq
	e(1 - \ln \eps) \cdot G(\vx)
	=
	\beta(\eps) \cdot G(\vx)
	\enspace,
\end{align*}
where the last inequality holds since
\[
	\sum_{j = 1}^{\eps^{-1}} \frac{1}{j}
	\leq
	1 + \int_1^{\eps^{-1}} \frac{dx}{x}
	=
	1 + [\ln x]_1^{\eps^{-1}}
	=
	1 - \ln \eps
	\enspace.
	\qedhere
\]
\end{proof}

We now define $\bar{F}(\vx) = e^{-1} \cdot \bar{G}(\vx) + C(\vx)$. To get a bound on the value of the output of Algorithm~\ref{alg:non_ob_fw}, we need to show that there exists at least one value of $i$ for which $\bar{F}(\vy^{(i + 1)})$ is not much larger than $\bar{F}(\vy^{(i)})$, which intuitively means that $\vy^{(i)}$ is roughly a local maximum with respect to $\bar{F}$. The following lemma proves that such an $i$ value indeed exists.
\begin{lemma} \label{lem:local_optimum_exists}
There is an integer $0 \leq i^* < \lceil e^{-1} \cdot \beta(\eps)/\eps^2 \rceil$ such that $\bar{F}(\vy^{(i^* + 1)}) - \bar{F}(\vy^{(i^*)}) \leq \eps^2 \cdot F(\vo)$.
\end{lemma}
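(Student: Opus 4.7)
The plan is to prove the lemma by contradiction via a telescoping/averaging argument. Set $N = \lceil e^{-1} \cdot \beta(\eps)/\eps^2 \rceil$, and suppose toward a contradiction that $\bar{F}(\vy^{(i+1)}) - \bar{F}(\vy^{(i)}) > \eps^2 \cdot F(\vo)$ for every $0 \leq i < N$. Telescoping these $N$ inequalities gives
\[
    \bar{F}(\vy^{(N)}) - \bar{F}(\vy^{(0)})
    >
    N \eps^2 \cdot F(\vo)
    \geq
    e^{-1} \beta(\eps) \cdot F(\vo),
\]
since $N \geq e^{-1} \beta(\eps)/\eps^2$ by definition.

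The next step is to establish a matching upper bound on $\bar{F}(\vy^{(N)}) - \bar{F}(\vy^{(0)})$. Because $G$ and $C$ are both non-negative (the non-negativity of $C$ is the extra hypothesis of this section), $\bar{G}(\vy^{(0)}) \geq 0$ and $C(\vy^{(0)}) \geq 0$, so $\bar{F}(\vy^{(0)}) \geq 0$, and it suffices to upper bound $\bar{F}(\vy^{(N)})$. Using Observation~\ref{obs:G_bar_range},
\[
    \bar{F}(\vy^{(N)})
    =
    e^{-1} \cdot \bar{G}(\vy^{(N)}) + C(\vy^{(N)})
    \leq
    e^{-1} \beta(\eps) \cdot G(\vy^{(N)}) + C(\vy^{(N)}).
\]
For $\eps \in (0, 1/4)$ we have $\beta(\eps) = e(1 - \ln \eps) \geq e$, so $e^{-1} \beta(\eps) \geq 1$; since $C(\vy^{(N)}) \geq 0$ we may inflate its coefficient to match, yielding
\[
    \bar{F}(\vy^{(N)})
    \leq
    e^{-1} \beta(\eps) \cdot [G(\vy^{(N)}) + C(\vy^{(N)})]
    =
    e^{-1} \beta(\eps) \cdot F(\vy^{(N)})
    \leq
    e^{-1} \beta(\eps) \cdot F(\vo),
\]
where the last inequality uses $\vy^{(N)} \in P$ (already noted in the section) together with optimality of $\vo$ for $F$ over $P$.

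Combining the two bounds gives $e^{-1} \beta(\eps) \cdot F(\vo) < e^{-1} \beta(\eps) \cdot F(\vo)$, a contradiction whenever $F(\vo) > 0$. The boundary case $F(\vo) = 0$ is handled trivially: non-negativity of $F$ forces $F \equiv 0$ on $P$, and any non-positive increment $\bar{F}(\vy^{(i^* + 1)}) - \bar{F}(\vy^{(i^*)}) \leq 0 = \eps^2 \cdot F(\vo)$ (which must exist, else the same telescoping argument contradicts the upper bound above) provides the required $i^*$.

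The only real obstacle is the bookkeeping that ensures a clean cancellation: the $e^{-1}$ factor in front of $\bar{G}$ in the definition of $\bar{F}$ is exactly what allows the $\beta(\eps)$ blowup coming from Observation~\ref{obs:G_bar_range} to land on $e^{-1} \beta(\eps)$ in the upper bound, matching the $N \eps^2 \geq e^{-1} \beta(\eps)$ lower bound extracted from telescoping. This matching is precisely what dictates the choice of iteration count $\lceil e^{-1} \beta(\eps)/\eps^2 \rceil$ in Algorithm~\ref{alg:non_ob_fw}, and any looser tuning would either break the contradiction or weaken the final guarantee.
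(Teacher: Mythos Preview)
Your proof is correct and follows essentially the same route as the paper's: assume the contrary, telescope to get $\bar{F}(\vy^{(N)}) - \bar{F}(\vy^{(0)}) > e^{-1}\beta(\eps)\cdot F(\vo)$, use non-negativity of $\bar{F}(\vy^{(0)})$, and then combine Observation~\ref{obs:G_bar_range} with $\beta(\eps)\geq e$ to upper bound $\bar{F}(\vy^{(N)})$ by $e^{-1}\beta(\eps)\cdot F(\vo)$, yielding the contradiction. One small remark: your separate treatment of the case $F(\vo)=0$ is unnecessary, since the chain $e^{-1}\beta(\eps)F(\vo) < \bar{F}(\vy^{(N)}) \leq e^{-1}\beta(\eps)F(\vo)$ is already a contradiction regardless of the value of $F(\vo)$.
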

\begin{proof}
Assume towards a contradiction that the lemma does not hold. This implies
\begin{align*}
	\bar{F}(\vy^{(\lceil \beta(\eps)/\eps^2 \rceil)}) - \bar{F}(\vy^{(0)})
	={} &
	\sum_{i = 0}^{\lceil \beta(\eps)/\eps^2 \rceil - 1} [\bar{F}(\vy^{(i + 1)}) - \bar{F}(\vy^{(i)})]\\
	>{} &
	\lceil e^{-1} \cdot \beta(\eps)/\eps^2 \rceil \cdot \eps^{2} \cdot F(\vo)
	\geq
	e^{-1} \cdot \beta(\eps) \cdot F(\vo)
	\enspace.
\end{align*}
Furthermore, using the non-negativity of $\bar{F}$, the last inequality implies
\begin{align*}
	e^{-1} \cdot \bar{G}(\vy^{(\lceil \beta(\eps)/\eps^2 \rceil)}) + C(\vy^{(\lceil \beta(\eps)/\eps^2 \rceil)})
	={} &
	\bar{F}(\vy^{(\lceil \beta(\eps)/\eps^2 \rceil)})\\
	\geq{} &
	\bar{F}(\vy^{(\lceil \beta(\eps)/\eps^2 \rceil)}) - \bar{F}(\vy^{(0)})
	>
	e^{-1} \cdot \beta(\eps) \cdot F(\vo)
	\enspace.
\end{align*}
Let us now upper bound the two terms on the leftmost side of the above inequality. By Observation~\ref{obs:G_bar_range}, we can upper bound $\bar{G}(\vy^{(\lceil \beta(\eps)/\eps^2 \rceil)})$ by $\beta(\eps) \cdot G(\vy^{(\lceil \beta(\eps)/\eps^2 \rceil)})$. Additionally, since $\beta(\eps) \geq e$ because $\eps < 1$, we can upper bound $C(\vy^{(\lceil \beta(\eps)/\eps^2 \rceil)})$ by $e^{-1} \cdot \beta(\eps) \cdot C(\vy^{(\lceil \beta(\eps)/\eps^2 \rceil)})$. Plugging both these upper bounds into the previous inequalities and dividing by $e^{-1} \cdot \beta(\eps)$ gives
\[
	F(\vy^{(\lceil \beta(\eps)/\eps^2 \rceil)})
	=
	G(\vy^{(\lceil \beta(\eps)/\eps^2 \rceil)}) + C(\vy^{(\lceil \beta(\eps)/\eps^2 \rceil)})
	>
	F(\vo)
	\enspace,
\]
which contradicts the definition of $\vo$, and thus, completes the proof of the lemma.
\end{proof}

The preceding lemma shows that $\vy^{(i^*)}$ is roughly a local optimum in terms of $\bar{F}$. We now show an upper bound on $\inner{(\vo - \vy^{(i^*)}), \nabla \bar{G}(\vy^{(i^*)})}$ that is implied by this property of $\vy^{(i^*)}$.
\begin{lemma} \label{lem:H_decrease_non_oblivious}
$\inner{e^{-1} \cdot (\vo - \vy^{(i^*)}), \nabla \bar{G}(\vy^{(i^*)})} \leq \eps \cdot F(\vo) + [C(\vy^{(i^*)}) - C(\vo)] + 4\eps LD^2$.
\end{lemma}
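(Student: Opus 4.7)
The plan is to combine Lemma~\ref{lem:local_optimum_exists} (which says $\vy^{(i^*)}$ is roughly a local optimum of $\bar{F}$) with the smoothness of $\bar{F}$ and the optimality of $\vs^{(i^*)}$ in the linear maximization step, and then peel off the $C$-contribution using concavity.

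First I would observe that $\bar{F} = e^{-1}\bar{G} + C$ is $2L$-smooth: by Observation~\ref{obs:G_smooth} $\bar{G}$ is $eL$-smooth, so $e^{-1}\bar{G}$ is $L$-smooth, and $C$ is $L$-smooth by assumption. Using smoothness with the displacement $\vh = \vy^{(i^*+1)} - \vy^{(i^*)} = \eps(\vs^{(i^*)} - \vy^{(i^*)})$, whose norm is at most $\eps \cdot 2D$, I would derive
\[
    \bar{F}(\vy^{(i^*+1)}) - \bar{F}(\vy^{(i^*)})
    \;\geq\;
    \eps \cdot \inner{\vs^{(i^*)} - \vy^{(i^*)},\, \nabla \bar{F}(\vy^{(i^*)})} \;-\; 4\eps^2 LD^2.
\]
(Either via the standard quadratic bound on a smooth function, or by integrating $\tfrac{d}{dz}\bar{F}(\vy^{(i^*)} + z(\vs^{(i^*)}-\vy^{(i^*)}))$ from $0$ to $\eps$ and applying Cauchy--Schwarz together with the Lipschitz estimate on $\nabla\bar{F}$, just as in the proofs of Lemmas~\ref{lem:step_continuous_greedy} and~\ref{lem:step_measured_continuous_greedy}.)

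Next, since $\vs^{(i^*)}$ is defined to be the maximizer of $\inner{\nabla \bar{F}(\vy^{(i^*)}),\vx}$ over $\vx\in P$, we have $\inner{\vs^{(i^*)},\nabla\bar{F}(\vy^{(i^*)})}\geq \inner{\vo, \nabla\bar{F}(\vy^{(i^*)})}$. Combining with the previous display and Lemma~\ref{lem:local_optimum_exists} gives
\[
    \eps^2 F(\vo)
    \;\geq\;
    \bar{F}(\vy^{(i^*+1)}) - \bar{F}(\vy^{(i^*)})
    \;\geq\;
    \eps \cdot \inner{\vo - \vy^{(i^*)},\, \nabla \bar{F}(\vy^{(i^*)})} - 4\eps^2 LD^2,
\]
so dividing by $\eps$ yields $\inner{\vo - \vy^{(i^*)}, \nabla \bar{F}(\vy^{(i^*)})} \leq \eps F(\vo) + 4\eps LD^2$.

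Finally I would split $\nabla \bar{F}(\vy^{(i^*)}) = e^{-1}\nabla \bar{G}(\vy^{(i^*)}) + \nabla C(\vy^{(i^*)})$ and use the concavity of $C$ to write $\inner{\vo - \vy^{(i^*)}, \nabla C(\vy^{(i^*)})} \geq C(\vo) - C(\vy^{(i^*)})$. Subtracting this from both sides of the bound above gives exactly
\[
    \inner{e^{-1}(\vo - \vy^{(i^*)}),\, \nabla \bar{G}(\vy^{(i^*)})}
    \;\leq\;
    \eps F(\vo) + [C(\vy^{(i^*)}) - C(\vo)] + 4\eps L D^2.
\]
The only step requiring real care is the constant tracking in the smoothness estimate: one has to remember that $\bar{G}$ is $eL$-smooth (not $L$-smooth), and that $\|\vs^{(i^*)} - \vy^{(i^*)}\|_2 \leq 2D$ rather than $D$, which is precisely what delivers the constant $4$ in the final error term $4\eps LD^2$.
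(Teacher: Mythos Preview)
Your proposal is correct and follows essentially the same route as the paper's proof: smoothness of $\bar F$ to linearize the step, optimality of $\vs^{(i^*)}$ to replace it by $\vo$, Lemma~\ref{lem:local_optimum_exists} to cap the left side by $\eps^2 F(\vo)$, and concavity of $C$ to isolate the $\bar G$-term. Your explicit bookkeeping that $\bar F$ is $2L$-smooth and $\|\vs^{(i^*)}-\vy^{(i^*)}\|_2\le 2D$ is exactly what underlies the paper's $8rLD^2$ integrand and the resulting $4\eps^2 LD^2$ error.
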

\begin{proof}
Observe that
\begin{align*}
	&
	\bar{F}(\vy^{(i^* + 1)}) - \bar{F}(\vy^{(i^*)})
	=
	\bar{F}((1 - \eps) \cdot \vy^{(i^*)} + \eps \cdot \vs^{(i^*)}) - \bar{F}(\vy^{(i^*)})\\
	={} &
	\int_0^\eps \left. \frac{\bar{F}((1 - z) \cdot \vy^{(i^*)} + z \cdot \vs^{(i^*)})}{dz}\right|_{z = r} dr\\
	={} &
	\int_0^\eps \inner{\vs^{(i^*)} - \vy^{(i^*)}, \nabla \bar{F}((1 - r) \cdot \vy^{(i^*)} + r \cdot \vs^{(i^* + 1)})} dr\\
	\geq{} &
	\int_0^\eps \left[\inner{\vs^{(i^*)} - \vy^{(i^*)}, \nabla \bar{F}(\vy^{(i^*)})} - 8rLD^2\right] dr
	=
	\inner{\eps \cdot (\vs^{(i^*)} - \vy^{(i^*)}), \nabla \bar{F}(\vy^{(i^*)})} - 4\eps^2LD^2
	\enspace.
\end{align*}
To make the expression in the rightmost side useful, we need to lower bound the expression $\inner{\vs^{(i^*)} - \vy^{(i^*)}, \nabla \bar{F}(\vy^{(i^*)})}$.
\begin{align*}
	(\vs^{(i^*)} -{}& \vy^{(i^*)}) \cdot \nabla \bar{F}(\vy^{(i^*)})\\
	={} &
	\inner{\vs^{(i^*)}, e^{-1} \cdot \nabla \bar{G}(\vy^{(i^*)}) + \nabla C(\vy^{(i^*)})} - \inner{\vy^{(i^*)}, e^{-1} \cdot \nabla \bar{G}(\vy^{(i^*)}) + \nabla C(\vy^{(i^*)})}\\
	\geq{} &
	\inner{\vo, e^{-1} \cdot \nabla \bar{G}(\vy^{(i^*)}) + \nabla C(\vy^{(i^*)})} - \inner{\vy^{(i^*)}, e^{-1} \cdot \nabla \bar{G}(\vy^{(i^*)}) + \nabla C(\vy^{(i^*)})}\\
	={} &
	\inner{e^{-1} \cdot (\vo - \vy^{(i^*)}), \nabla \bar{G}(\vy^{(i^*)})} + \inner{\vo - \vy^{(i^*)}, \nabla C(\vy^{(i^*)})}
	\enspace,
\end{align*}
where the inequality holds since $\vs^{(i)}$ is the maximizer found by Algorithm~\ref{alg:non_ob_fw}. Combining the last two inequalities and the guarantee on $i^*$ from Lemma~\ref{lem:local_optimum_exists}, we now get
\begin{align*}
	\eps^2 \cdot F(\vo)
	\geq{} &
	\bar{F}(\vy^{(i^* + 1)}) - \bar{F}(\vy^{(i^*)})
	\geq
	\inner{\eps \cdot (\vs^{(i^*)} - \vy^{(i^*)}), \nabla \bar{F}(\vy^{(i^*)})} - 4\eps^2LD^2\\
	\geq{} &
	\eps \cdot \left[\inner{e^{-1} \cdot (\vo - \vy^{(i^*)}), \nabla \bar{G}(\vy^{(i^*)})} + \inner{\vo - \vy^{(i^*)}, \nabla C(\vy^{(i^*)})}\right] - 4\eps^2LD^2\\
	\geq{} &
	\eps \cdot \left[\inner{e^{-1} \cdot (\vo - \vy^{(i^*)}), \nabla \bar{G}(\vy^{(i^*)})} + C(\vo) - C(\vy^{(i - 1)})\right] - 4\eps^2LD^2
	\enspace,
\end{align*}
where the last inequality follows from the concavity of $C$. The lemma now follows by rearranging the last inequality (and dividing it by $\eps$).
\end{proof}

We now wish to lower bound $\inner{(\vo - \vy^{(i^*)}), \nabla \bar{G}(\vy^{(i^*)})}$ and we do so by separately analyzing $\inner{\vo, \nabla \bar{G}(\vy^{(i^*)})}$ and $\inner{\vy^{(i^*)}, \nabla \bar{G}(\vy^{(i^*)})}$ in the following two lemmas.

\begin{lemma} \label{lem:o_gradient_bound}
$\inner{\vo, \nabla \bar{G}(\vy^{(i^*)})} \geq (e - 1) \cdot G(\vo) - \eps \cdot \sum_{j = 1}^{\eps^{-1}} e^{\eps j} \cdot G(\eps j \cdot \vy^{(i^*)})$.
\end{lemma}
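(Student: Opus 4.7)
The plan is to expand $\nabla \bar{G}$ by the chain rule, handle each term in the resulting sum using the standard DR-submodular/monotone gradient inequality, and then recognize that the remaining factor is a Riemann sum bounded below by $\int_0^1 e^x\, dx = e-1$.

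First I would differentiate the definition of $\bar{G}$. Since $\nabla_{\vx}\, G(\eps j \cdot \vx) = \eps j \cdot (\nabla G)(\eps j \cdot \vx)$ by the chain rule, the factor $1/(\eps j)$ in the definition cancels cleanly, giving
\[
    \nabla \bar{G}(\vy^{(i^*)})
    =
    \eps \cdot \sum_{j=1}^{\eps^{-1}} e^{\eps j} \cdot \nabla G(\eps j \cdot \vy^{(i^*)}).
\]
Taking the inner product with $\vo$ reduces the lemma to lower bounding $\inner{\vo, \nabla G(\eps j \cdot \vy^{(i^*)})}$ for each $j$.

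Next I would reuse the argument from the proof of Lemma~\ref{lem:step_continuous_greedy}: for any point $\vz = \eps j \cdot \vy^{(i^*)} \in [0,1]^n$, monotonicity gives $G(\vo) \leq G(\vo \vee \vz)$, and then DR-submodularity applied along the segment from $\vz$ to $\vo \vee \vz$ yields
\[
    G(\vo \vee \vz) - G(\vz)
    \leq
    \inner{\vo \vee \vz - \vz,\, \nabla G(\vz)}
    \leq
    \inner{\vo,\, \nabla G(\vz)},
\]
where the last step uses $\vo \vee \vz - \vz \leq \vo$ coordinatewise together with non-negativity of $\nabla G(\vz)$ (which follows from monotonicity of $G$). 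Combining these, $\inner{\vo, \nabla G(\eps j \cdot \vy^{(i^*)})} \geq G(\vo) - G(\eps j \cdot \vy^{(i^*)})$, so summing gives
\[
    \inner{\vo, \nabla \bar{G}(\vy^{(i^*)})}
    \geq
    \Bigl(\eps \sum_{j=1}^{\eps^{-1}} e^{\eps j}\Bigr) \cdot G(\vo)
    - \eps \cdot \sum_{j=1}^{\eps^{-1}} e^{\eps j} \cdot G(\eps j \cdot \vy^{(i^*)}).
\]

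The remaining step, and the only potential obstacle, is to show $\eps \sum_{j=1}^{\eps^{-1}} e^{\eps j} \geq e - 1$. This is the right-endpoint Riemann sum for $\int_0^1 e^x\, dx$; since $e^x$ is increasing, $\eps \cdot e^{\eps j} \geq \int_{\eps(j-1)}^{\eps j} e^x\, dx$ on each subinterval, and summing telescopes to $\int_0^1 e^x\, dx = e - 1$. Plugging this in yields the stated bound.
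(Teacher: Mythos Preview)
Your proof is correct and follows essentially the same route as the paper: expand $\nabla\bar{G}$ via the chain rule, apply the standard monotone DR-submodular inequality $\inner{\vo,\nabla G(\vz)}\geq G(\vo)-G(\vz)$ at each $\vz=\eps j\cdot\vy^{(i^*)}$, and then lower bound $\eps\sum_{j=1}^{\eps^{-1}}e^{\eps j}$ by $e-1$. The only cosmetic difference is that the paper computes this last sum exactly as a geometric series, $\eps\sum_j e^{\eps j}=\tfrac{\eps}{1-e^{-\eps}}(e-1)\geq e-1$, whereas you use the right-endpoint Riemann sum comparison with $\int_0^1 e^x\,dx$; both arguments are equally short and yield the same bound.
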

\begin{proof}
For brevity, we use in this proof the shorthand $\vu^{(j)} = \eps j \cdot \vy^{(i^*)}$. Then, for every integer $0 \leq j \leq \eps^{-1}$, we get (in these calculations, the expression $\nabla G(\eps j \cdot \vy^{(i^*)})$ represents the gradient of the function $G(\eps j \cdot x)$ at the point $x = \vy^{(i^*)}$).
\begin{align*}
	\frac{\inner{\vo, \nabla G(\eps j \cdot \vy^{(i^*)})}}{\eps j}
	\geq{} \mspace{-61mu}&\mspace{61mu}
	\frac{\inner{\vo \vee \vu^{(j)} - \vu^{(j)}, \nabla G(\eps j \cdot \vy^{(i^*)})}}{\eps j}
	=
	\left. \frac{dG(\vu^{(j)} + z(\vo \vee \vu^{(j)} - \vu^{(j)}))}{dz} \right|_{z = 0}\\
	\geq{} &
	\int_0^1 \left. \frac{dG(\vu^{(j)} + z \cdot (\vo \vee \vu^{(j)} - \vu^{(j)}))}{dz}\right|_{z = r} dr
	=
	\left[ G(\vu^{(j)} + z \cdot (\vo \vee \vu^{(j)} - \vu^{(j)}))\right]_0^1\\
	={} &
	G(\vo \vee \vu^{(uj)}) - G(\eps j \cdot \vy^{(i^*)})
	\geq
	G(\vo) - G(\eps j \cdot \vy^{(i^*)})
	\enspace,
\end{align*}
where the first and last inequalities follow from the monotonicity of $G$ and the second inequality follows from the DR-submodularity of $G$. Using the last inequality, we now get
\begin{align*}
	\inner{\vo, \nabla \bar{G}(\vy^{(i^*)})}
	={} &
	\eps \cdot \sum_{j = 1}^{\eps^{-1}} \frac{\inner{e^{\eps j} \cdot \vo, \nabla G(\eps j \cdot \vy^{(i^*)})}}{\eps j}
	\geq
	\eps \cdot \sum_{j = 1}^{\eps^{-1}} e^{\eps j} \cdot [G(\vo) - G(\eps j \cdot \vy^{(i^*)})]\\
	\geq{} &
	(e - 1) \cdot G(\vo) - \eps \cdot \sum_{j = 1}^{\eps^{-1}} e^{\eps j} \cdot G(\eps j \cdot \vy^{(i^*)})
	\enspace,
\end{align*}
where the second inequality holds since the fact that $\sum_{j = 1}^{\eps^{-1}} e^{\eps j}$ is a geometric sum implies
\[
	\eps \cdot \sum_{j = 1}^{\eps^{-1}} e^{\eps j}
	=
	\eps \cdot e^{\eps} \cdot \frac{e - 1}{e^{\eps} - 1}
	=
	\frac{\eps}{1 - e^{-\eps}} \cdot (e - 1)
	\geq
	e - 1
	\enspace.
	\qedhere
\]
\end{proof}

\begin{lemma} \label{lem:y_gradient_bound}
$\inner{\vy^{(i^*)}, \nabla \bar{G}(\vy^{(i^*)})} \leq (e + 6\eps \ln \eps^{-1}) \cdot G(\vy^{(i^*)}) - \eps  \cdot \sum_{j = 1}^{\eps^{-1}} e^{\eps j} \cdot G(\eps j \cdot \vy^{(i^*)})$.
\end{lemma}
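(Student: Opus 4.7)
The plan is to reduce the target inequality to a one-dimensional statement about a concave function and then finish with Abel summation. Define $\phi\colon [0, 1] \to \bR$ by $\phi(t) = G(t \cdot \vy^{(i^*)})$. Since $\vy^{(i^*)} \geq \vzero$ and $G$ is non-negative, monotone, and DR-submodular, $\phi$ is non-negative, non-decreasing, and concave (DR-submodularity along the non-negative direction $\vy^{(i^*)}$). Using the chain rule together with the notational convention introduced in Lemma~\ref{lem:o_gradient_bound} (under which $\nabla G(\eps j \cdot \vy^{(i^*)})$ is the gradient of $x \mapsto G(\eps j \cdot x)$ at $\vy^{(i^*)}$), one checks that $\inner{\vy^{(i^*)}, \nabla G(\eps j \cdot \vy^{(i^*)})} = \eps j \cdot \phi'(\eps j)$, which collapses the target quantity into a one-dimensional sum:
\[
    \inner{\vy^{(i^*)}, \nabla \bar G(\vy^{(i^*)})}
    =
    \eps \sum_{j = 1}^{\eps^{-1}} e^{\eps j} \phi'(\eps j).
\]

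Concavity of $\phi$ gives $\phi'(\eps j) \leq [\phi(\eps j) - \phi(\eps(j-1))]/\eps$, so the sum above is at most $\sum_{j = 1}^{\eps^{-1}} e^{\eps j} \bigl[ G(\eps j \cdot \vy^{(i^*)}) - G(\eps(j-1) \cdot \vy^{(i^*)}) \bigr]$. This is where I would invoke summation by parts: shift the index in the negative piece (setting $k = j - 1$) and separate the two boundary terms. Since $\eps \cdot \eps^{-1} = 1$, the $j = \eps^{-1}$ boundary contributes the desired $e \cdot G(\vy^{(i^*)})$; the $k = 0$ boundary contributes the harmless $- e^{\eps} \cdot G(\vzero) \leq 0$; and each interior value $j \in \{1,\dotsc,\eps^{-1} - 1\}$ receives coefficient $-(e^\eps - 1)$ on $e^{\eps j} G(\eps j \cdot \vy^{(i^*)})$.

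To match the exact form stated in the lemma, I would then move $\eps \sum_{j = 1}^{\eps^{-1}} e^{\eps j} G(\eps j \cdot \vy^{(i^*)})$ to the right-hand side. Its $j = \eps^{-1}$ contribution adds $\eps e \cdot G(\vy^{(i^*)})$ to the $G(\vy^{(i^*)})$ coefficient, and its interior contributions combine with the previous $-(e^\eps - 1)$ coefficients to give $\eps - (e^\eps - 1) \leq 0$ (using $e^\eps \geq 1 + \eps$). These non-positive interior terms can be discarded together with $-e^\eps G(\vzero) \leq 0$, leaving
\[
    \inner{\vy^{(i^*)}, \nabla \bar G(\vy^{(i^*)})}
    \leq
    (e + \eps e) \cdot G(\vy^{(i^*)}) - \eps \sum_{j=1}^{\eps^{-1}} e^{\eps j} G(\eps j \cdot \vy^{(i^*)}).
\]
The final step is to upgrade $\eps e$ to the looser $6 \eps \ln \eps^{-1}$ claimed in the lemma; this holds because $\eps < 1/4$ forces $\ln \eps^{-1} > \ln 4 > e/6$.

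The only step demanding any real care is the Abel summation and the bookkeeping of boundary terms (ensuring the $e \cdot G(\vy^{(i^*)})$ coefficient appears cleanly while the $G(\vzero)$ and interior terms can be dropped with the right sign). The remaining moves---the chain-rule reduction to $\phi$, the concavity inequality, and $e^\eps \geq 1 + \eps$---are entirely routine.
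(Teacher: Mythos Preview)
Your argument is correct and considerably cleaner than the paper's. Both proofs begin from the same one-dimensional reduction $\inner{\vy^{(i^*)}, \nabla \bar G(\vy^{(i^*)})} = \eps \sum_{j} e^{\eps j} \phi'(\eps j)$ with $\phi(t) = G(t\,\vy^{(i^*)})$ concave, but then diverge sharply. You immediately apply the concavity bound $\eps\,\phi'(\eps j) \leq \phi(\eps j) - \phi(\eps(j-1))$ and a single Abel summation, after which the $e \cdot G(\vy^{(i^*)})$ boundary term and the interior coefficients $\eps - (e^\eps - 1) \leq 0$ fall out directly. The paper instead bounds $e^{\eps j}$ by $1 + \eps \sum_{h \leq j} e^{\eps h}$, swaps the order of summation, bounds each tail $\eps \sum_{j \geq h} \phi'(\eps j)$ by an integral, and then needs Chebyshev's sum inequality plus several lines of numerical estimates to control the resulting coefficients. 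Your route avoids Chebyshev entirely and yields the sharper constant $e + \eps e$ in place of $e + 6\eps \ln \eps^{-1}$ (the latter being what the downstream corollary actually quotes, so your bound would propagate to a slightly better final guarantee). The only minor point worth making explicit in a write-up is that the concavity of $\phi$ follows from DR-submodularity together with $\vy^{(i^*)} \geq \vzero$, and that $\phi' \geq 0$ (monotonicity) is not actually needed for your argument, whereas the paper does rely on it.
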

\begin{proof}
In the following calculation, the expression $\nabla G(\eps j \cdot \vy^{(i^*)})$ again represents the gradient of the function $G(\eps j \cdot x)$ at the point $x = \vy^{(i^*)}$). Thus,
\begin{align} \label{eq:vy_upperbound_step_1}
	\inner{\vy^{(i^*)}, \nabla \bar{G}(\vy^{(i^*)})}
	={} &
	\eps \cdot \sum_{j = 1}^{\eps^{-1}} \frac{\inner{e^{\eps j} \cdot \vy^{(i^*)}, \nabla G(\eps j \cdot \vy^{(i^*)})}}{\eps j}
	=
	\eps \cdot \sum_{j = 1}^{\eps^{-1}} e^{\eps j} \cdot \left. \frac{dG(z \cdot \vy^{(i^*)})}{dz} \right|_{z = \eps j}\\ \nonumber
	\leq{} &
	\eps \cdot \sum_{h = 1}^{\eps^{-1}} \left[\left. \frac{dG(z \cdot \vy^{(i^*)})}{dz} \right|_{z = \eps h} + \eps \cdot e^{\eps h} \cdot \sum_{j = h}^{\eps^{-1}} \left. \frac{dG(z \cdot \vy^{(i^*)})}{dz} \right|_{z = \eps j}\right]
	\enspace,
\end{align}
where the inequality holds because
\[
	\eps \cdot \sum_{h = 1}^{j} e^{\eps h}
	=
	\eps \cdot e^{\eps} \cdot \frac{e^{\eps j} - 1}{e^\eps - 1}
	=
	\frac{\eps}{1 - e^{-\eps}} \cdot (e^{\eps j} - 1)
	\geq
	e^{\eps j} - 1
	\enspace.
\]

We now observe also that the DR-submodularity of $G$ implies, for every integer $1 \leq h \leq \eps^{-1}$, that
\begin{align*}
	\eps \cdot \sum_{j = h}^{\eps^{-1}} \left. \frac{dG(z \cdot \vy^{(i^*)})}{dz} \right|_{z = \eps j}
	\leq{} &
	h^{-1} \cdot \int_0^{\eps h} \left. \frac{dG(z \cdot \vy^{(i^*)})}{dz} \right|_{z = x} dx + \int_{\eps h}^1 \left. \frac{dG(z \cdot \vy^{(i^*)})}{dz} \right|_{z = x} dx\\
	={} &
	h^{-1} \cdot [G(x \cdot \vy^{(i^*)})]_{0}^{\eps h} + [G(x \cdot \vy^{(i^*)})]_{\eps h}^1\\
	={} &
	h^{-1} \cdot G(\eps h \cdot \vy^{(i^*)}) - h^{-1} \cdot G(\vzero) + G(\vy^{(i^*)}) - G(\eps h \cdot \vy^{(i^*)})\\
	\leq{} &
	G(\vy^{(i^*)}) - (1 - h^{-1}) \cdot G(\eps h \cdot \vy^{(i^*)})
	\enspace,
\end{align*}
where the last inequality follows from the non-negativity of $G$. Plugging the last inequality now into Inequality~\eqref{eq:vy_upperbound_step_1} yields
{\allowdisplaybreaks\begin{align} \label{eq:before_coefficient_analysis}
	\inner{\vy^{(i^*)}, \nabla \bar{G}(\vy^{(i^*)})}\mspace{-51mu}{}&\mspace{51mu}
	\leq
	G(\vy^{(i^*)}) + \eps \cdot \sum_{j = 1}^{\eps^{-1}} \left[e^{\eps j} \cdot \{G(\vy^{(i^*)}) - (1 - j^{-1}) \cdot G(\eps j \cdot \vy^{(i^*)})\}\right]\\ \nonumber
	={} &
	G(\vy^{(i^*)}) + \eps \cdot \sum_{j = 1}^{\eps^{-1}} e^{\eps j} \cdot G(\vy^{(i^*)}) - \eps \cdot \sum_{j = 1}^{\eps^{-1}} (1 - j^{-1}) \cdot e^{\eps j} \cdot G(\eps j \cdot \vy^{(i^*)})\\ \nonumber
	\leq{} &
	G(\vy^{(i^*)}) +\eps \cdot \sum_{j = 1}^{\eps^{-1}} e^{\eps j} \cdot G(\vy^{(i^*)}) - \eps^2 \cdot \left(\sum_{j = 1}^{\eps^{-1}} (1 - j^{-1})\right) \cdot \left(\sum_{j = 1}^{\eps^{-1}} e^{\eps j} \cdot G(\eps j \cdot \vy^{(i^*)})\right)\\ \nonumber
	\leq{} &
	G(\vy^{(i^*)}) + \eps \cdot \sum_{j = 1}^{\eps^{-1}} e^{\eps j} \cdot G(\vy^{(i^*)}) - \eps(1 + 2\eps \ln \eps)  \cdot \sum_{j = 1}^{\eps^{-1}} e^{\eps j} \cdot G(\eps j \cdot \vy^{(i^*)})\\ \nonumber
	\leq{} &
	G(\vy^{(i^*)}) + \left(\eps - 2\eps^2 \ln \eps \right) \cdot \sum_{j = 1}^{\eps^{-1}} e^{\eps j} \cdot G(\vy^{(i^*)}) - \eps  \cdot \sum_{j = 1}^{\eps^{-1}} e^{\eps j} \cdot G(\eps j \cdot \vy^{(i^*)})
	\enspace,
\end{align}}
where the last inequality follows from the monotonicity of $G$ (since $2\eps \ln \eps < 0$) and the second inequality follows from Chebyshev's sum inequality because the monotonicity of $G$ implies that both $1 - j^{-1}$ and $e^{\eps j} \cdot G(\eps j \cdot \vy^{(i^*)})$ are non-decreasing functions of $j$. Additionally, the penultimate inequality follows due to the following calculation, which holds for every $\eps \in (0, 1/3)$.
\[
	\eps \cdot \sum_{j = 1}^{\eps^{-1}} (1 - j^{-1})
	\geq
	\eps \cdot \int_1^{\eps^{-1}} (1 - x^{-1}) dx
	=
	\eps [x - \ln x]_1^{\eps^{-1}}
	=
	1 - \eps + \eps \ln \eps
	\geq
	1 + 2\eps \ln \eps
	\enspace.
\]

To complete the proof of the lemma, it remains to show that the coefficient of $G(\vy^{(i^*)})$ in the rightmost side of Inequality~\eqref{eq:before_coefficient_analysis} is upper bounded by $e + 6\eps \ln \eps^{-1}$. To do that, we note that this coefficient is
\begin{align*}
	1 + (\eps - 2\eps^2 \ln \eps) &{}\cdot \sum_{j = 1}^{\eps^{-1}} e^{\eps j}
	=
	1 + (\eps - 2\eps^2 \ln \eps) \cdot e^{\eps} \cdot \frac{e - 1}{e^\eps - 1}
	=
	1 + \frac{\eps (1 - 2\eps \ln \eps)}{1 - e^{-\eps}} \cdot (e - 1)\\
	\leq{} &
	1 + \frac{1 - 2\eps \ln \eps}{1 - \eps/2} \cdot (e - 1)
	\leq
	1 + (1 - 2\eps \ln \eps)(1 + \eps)(e - 1)\\
	={} &
	e + \eps(e - 1) \cdot [1 - 2(1 + \eps) \ln \eps]
	\leq
	e + 6\eps \ln \eps^{-1}
	\enspace,
\end{align*}
where the first inequality and inequalities hold because $e^{-\eps} \leq 1 - \eps + \eps^2/2$ and $(1 - \eps/2)^{-1} \leq (1 + \eps)$ for $\eps \in (0, 1)$, and the last inequality holds for $\eps \in (0, 1/4)$.
\end{proof}

Combining the above lemmas then leads to the following corollary.
\begin{corollary} \label{cor:non_oblivious_guarantee}
$G(\vy^{(i^*)}) + C(\vy^{(i^*)}) \geq (1 - 1/e - 4\eps \ln \eps^{-1}) \cdot G(\vo) + (1 - 4\eps \ln \eps^{-1}) \cdot C(\vo) - 4\eps LD^2$.
\end{corollary}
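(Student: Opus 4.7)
The plan is to chain Lemmas~\ref{lem:H_decrease_non_oblivious}, \ref{lem:o_gradient_bound}, and~\ref{lem:y_gradient_bound}. The key observation is that the sum $\eps \sum_{j = 1}^{\eps^{-1}} e^{\eps j} G(\eps j \cdot \vy^{(i^*)})$ appears with opposite signs in Lemmas~\ref{lem:o_gradient_bound} and~\ref{lem:y_gradient_bound}; subtracting the upper bound of Lemma~\ref{lem:y_gradient_bound} from the lower bound of Lemma~\ref{lem:o_gradient_bound} therefore cancels this sum and yields
\[
	\inner{\vo - \vy^{(i^*)}, \nabla \bar{G}(\vy^{(i^*)})} \geq (e - 1) G(\vo) - (e + 6\eps \ln \eps^{-1}) G(\vy^{(i^*)}).
\]
Dividing both sides by $e$ matches the left-hand side to exactly the expression that Lemma~\ref{lem:H_decrease_non_oblivious} upper-bounds, so chaining the inequalities and using $\eps F(\vo) = \eps G(\vo) + \eps C(\vo)$ produces, after rearrangement,
\[
	(1 + \gamma) G(\vy^{(i^*)}) + C(\vy^{(i^*)}) \geq (1 - 1/e - \eps) G(\vo) + (1 - \eps) C(\vo) - 4\eps LD^2,
\]
where $\gamma := 6\eps e^{-1} \ln \eps^{-1}$.

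The second step converts this into a lower bound on $F(\vy^{(i^*)}) = G(\vy^{(i^*)}) + C(\vy^{(i^*)})$. Since $C$ is non-negative in this section, the left-hand side above is at most $(1+\gamma) G(\vy^{(i^*)}) + (1+\gamma) C(\vy^{(i^*)}) = (1+\gamma) F(\vy^{(i^*)})$. Dividing through by $1 + \gamma$, applying $1/(1+\gamma) \geq 1 - \gamma$ to the (non-negative) main term involving $G(\vo)$ and $C(\vo)$, and noting that $(-4\eps LD^2)/(1+\gamma) \geq -4\eps LD^2$ on the (non-positive) error term, I obtain
\[
	F(\vy^{(i^*)}) \geq (1 - \gamma) \bigl[(1 - 1/e - \eps) G(\vo) + (1 - \eps) C(\vo)\bigr] - 4\eps LD^2.
\]

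The main obstacle is purely bookkeeping: verifying that the resulting coefficients $(1 - \gamma)(1 - 1/e - \eps)$ and $(1 - \gamma)(1 - \eps)$ dominate the targets $1 - 1/e - 4\eps \ln \eps^{-1}$ and $1 - 4\eps \ln \eps^{-1}$, respectively. Since each factor $(1 - 1/e - \eps)$ and $(1 - \eps)$ is at most $1$, both comparisons reduce to checking $\eps + \gamma \leq 4\eps \ln \eps^{-1}$, i.e., $1 + (6/e) \ln \eps^{-1} \leq 4 \ln \eps^{-1}$. Since $6/e < 4$, this holds whenever $\ln \eps^{-1} \geq (4 - 6/e)^{-1}$, which is comfortably satisfied throughout the operating regime $\eps \in (0, 1/4)$ assumed by Algorithm~\ref{alg:non_ob_fw}.
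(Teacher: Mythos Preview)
Your proof is correct and follows the same overall strategy as the paper: chain Lemmata~\ref{lem:o_gradient_bound} and~\ref{lem:y_gradient_bound} so that the sum $\eps\sum_j e^{\eps j}G(\eps j\cdot\vy^{(i^*)})$ cancels, then feed the result into Lemma~\ref{lem:H_decrease_non_oblivious}. The one point of divergence is the final absorption of the stray $\gamma\,G(\vy^{(i^*)})$ term. The paper bounds $G(\vy^{(i^*)})\le G(\vo)+C(\vo)$ using both the optimality of $\vo$ and the non-negativity of $C$, which immediately converts the unwanted term into part of the coefficient loss on $G(\vo)$ and $C(\vo)$. You instead pad the left-hand side to $(1+\gamma)F(\vy^{(i^*)})$ using only $C\ge 0$, then divide by $1+\gamma$ and use $1/(1+\gamma)\ge 1-\gamma$. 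Your route is marginally more economical (it never invokes the optimality of $\vo$), at the cost of a slightly fussier constant check at the end; both land on the same inequality $\eps+\gamma\le 4\eps\ln\eps^{-1}$, which indeed holds for $\eps\in(0,1/4)$.
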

\begin{proof}
Observe that
\begin{align*}
	(1 - 1/e) \cdot G(\vo) - (1 &{}+ 3\eps \ln \eps^{-1}) \cdot G(\vy^{(i^*)})\\
	\leq{} &
	e^{-1} \cdot \{[(e - 1) \cdot G(\vo) - \eps \cdot \sum_{j = 1}^{\eps^{-1}} e^{\eps j} \cdot G(j \cdot \vy^{(i^*)})] \\&\mspace{100mu}- [(e + 6\eps \ln \eps^{-1}) \cdot G(\vy^{(i^*)}) - \eps  \cdot \sum_{j = 1}^{\eps^{-1}} e^{\eps j} \cdot G(\eps j \cdot \vy^{(i^*)})]\}\\
	\leq{} &
	e^{-1} \cdot \inner{\vo - \vy^{(i^*)}, \nabla \bar{G}(\vy^{(i^*)})}
	\leq
	\eps \cdot F(\vo) + [C(\vy^{(i^*)}) - C(\vo)] + 4\eps LD^2
	\enspace,
\end{align*}
where the second inequality follows from Lemmata~\ref{lem:o_gradient_bound} and~\ref{lem:y_gradient_bound}, and the last inequality follows from Lemma~\ref{lem:H_decrease_non_oblivious}. Rearranging the above inequality now yields
\begin{align*}
	G(\vy^{(i^*)}) + C(\vy^{(i^*)})
	\geq{} &
	(1 - 1/e - \eps) \cdot G(\vo) + (1 - \eps) \cdot C(\vo) - 3\eps \ln \eps^{-1} \cdot G(\vy^{(i^*)}) - 4\eps LD^2\\
	\geq{} &
	(1 - 1/e - 4\eps \ln \eps^{-1}) \cdot G(\vo) + (1 - 4\eps \ln \eps^{-1}) \cdot C(\vo) - 4\eps LD^2
	\enspace,
\end{align*}
where the second inequality holds since $G(\vo) + C(\vo) \geq G(\vy^{(i^*)}) + C(\vy^{(i^*)}) \geq G(\vy^{(i^*)})$ due to the choice of the vector $\vo$ and the non-negativity of $C$.
\end{proof}

We are now ready to state and prove Theorem~\ref{thm:non_ob_fw}.

\begin{restatable}{theorem}{thmNonObFw} \label{thm:non_ob_fw}
Let $S$ be the time it takes to find a point in $P$ maximizing a given linear function, then Algorithm~\ref{alg:non_ob_fw} runs in $O(\eps^{-2}(n/\eps + S)\ln \eps^{-1})$ time, makes $ O(\eps^{-3} \ln \eps^{-1})$ gradient oracle calls, and outputs a vector $\vy$ such that:
\begin{equation*}
F(\vy)
	\geq {}
	(1 - 1/e - 4\eps \ln \eps^{-1}) \cdot G(\vo)
	  + (1 - 4\eps \ln \eps^{-1}) \cdot C(\vo) - 4\eps LD^2
	\enspace.
\end{equation*}
\end{restatable}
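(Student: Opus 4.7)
The plan is to dispatch this theorem by combining two nearly independent pieces: the approximation guarantee, which is essentially packaged by Corollary~\ref{cor:non_oblivious_guarantee}, and the resource accounting, which is a direct bookkeeping exercise. Membership of the output in $P$ was already observed above Observation~\ref{obs:G_smooth}, so it need not be revisited.

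For the approximation guarantee, I would observe that the algorithm returns the iterate in $\{\vy^{(0)}, \dotsc, \vy^{(\lceil e^{-1}\beta(\eps)/\eps^2\rceil)}\}$ that maximizes $F$. Lemma~\ref{lem:local_optimum_exists} certifies the existence of an index $i^*$ in this range for which $\bar{F}(\vy^{(i^*+1)}) - \bar{F}(\vy^{(i^*)}) \leq \eps^2 \cdot F(\vo)$, and Corollary~\ref{cor:non_oblivious_guarantee} then supplies exactly the lower bound $F(\vy^{(i^*)}) \geq (1 - 1/e - 4\eps \ln \eps^{-1}) G(\vo) + (1 - 4\eps \ln \eps^{-1}) C(\vo) - 4\eps LD^2$. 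Since the returned $\vy$ has $F(\vy) \geq F(\vy^{(i^*)})$ by construction, the stated guarantee follows immediately.

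For the time and oracle complexities, I would tabulate the cost per iteration and multiply. The for-loop executes $\lceil e^{-1}\beta(\eps)/\eps^2 \rceil = O(\eps^{-2} \ln \eps^{-1})$ times, since $\beta(\eps) = e(1 - \ln \eps) = \Theta(\ln \eps^{-1})$. Each iteration must evaluate $\nabla \bar{G}(\vy^{(i)})$; by the definition $\bar{G}(\vx) = \eps \sum_{j=1}^{\eps^{-1}} e^{\eps j} G(\eps j \cdot \vx) / (\eps j)$, its gradient is the analogous weighted sum of $\eps^{-1}$ gradients of $G$ at the points $\eps j \cdot \vy^{(i)}$, requiring $\eps^{-1}$ calls to the gradient oracle of $G$ and $O(n/\eps)$ vector arithmetic to aggregate them. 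It must also make one call to the gradient oracle of $C$, solve one linear maximization over $P$ at cost $S$, and perform the convex-combination update in $O(n)$ time. Summing, each iteration costs $O(n/\eps + S)$ time and $O(\eps^{-1})$ gradient oracle calls.

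Multiplying iteration count by per-iteration cost yields the claimed $O(\eps^{-2}(n/\eps + S) \ln \eps^{-1})$ time and $O(\eps^{-3} \ln \eps^{-1})$ gradient oracle calls. The only mildly delicate point is recognizing that a single gradient query to $\bar{G}$ unfolds into $\eps^{-1}$ queries to $G$, but this was already flagged in the paragraph preceding the theorem. There is no real obstacle: all the technical effort sits in the chain of Lemmas~\ref{lem:local_optimum_exists}--\ref{lem:y_gradient_bound} culminating in Corollary~\ref{cor:non_oblivious_guarantee}, and the theorem is essentially a one-line corollary plus routine bookkeeping.
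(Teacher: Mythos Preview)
Your proposal is correct and follows essentially the same approach as the paper's own proof: invoke Corollary~\ref{cor:non_oblivious_guarantee} for the approximation guarantee (noting that the algorithm's output is at least as good as $\vy^{(i^*)}$), and do straightforward per-iteration bookkeeping for the time and oracle complexities. Your accounting is in fact slightly more explicit than the paper's, but the argument is identical in substance.
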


\begin{proof}
Once more, we begin by analyzing the time and oracle complexities of the algorithm. Every iteration of the loop of Algorithm~\ref{alg:non_ob_fw} takes $O(n/\eps + S)$ time. As there are $\lceil e^{-1} \cdot \beta(\eps) / \eps^{-2} \rceil = O(\eps^{-2} \ln \eps^{-1})$ such iterations, the entire algorithm runs in $O(\eps^{-2}(n/\eps + S)\ln \eps^{-1})$ time. Furthermore, as each iteration of the loop requires $O(\eps^{-1})$ oracle calls (a single call to the oracle corresponding to $C$, and $\eps^{-1}$ calls to the oracle corresponding to $G$), the overall oracle complexity is $ O(\eps^{-3} \ln \eps^{-1})$.

The theorem now follows since the value of the output of Algorithm~\ref{alg:non_ob_fw} is at least as good as $F(\vy^{(i^*)})$, and Corollary~\ref{cor:non_oblivious_guarantee} guarantees that
$
	F(\vy^{(i^*)})
	\geq
	(1 - 1/e - 4\eps \ln \eps^{-1}) \cdot G(\vo) + (1 - 4\eps \ln \eps^{-1}) \cdot C(\vo) - 4\eps LD^2
	.
$
\end{proof}

\section{Experiments}\label{sec:experiments}
In this section we describe some experiments pertaining to our algorithms for maximizing DR-submodular + concave functions. All experiments are done on a 2015 Apple MacBook Pro with a quad-core 2.2 GHz i7 processor and 16 GB of RAM. 

\subsection{Interpolating Between Constrasting Objectives}\label{experiments:interpolation}

We use our algorithms for maximizing the sum of a DR-submodular function and a concave function to provide a way to achieve a trade-off between different objectives. For example, given a ground set $X$ and a DPP supported on the power set $2^X$, the maximum a posteriori (MAP) of the DPP corresponds to picking the most likely (diverse) set of elements according to the DPP. On the other hand, concave functions can be used to encourage points being closer together and clustered.

Finding the MAP of a DPP is an NP-hard problem. However, continuous approaches employing the multilinear extension \citep{maximizing2011calinescu} or the softmax extension \citep{bian2017continuous, gillenwater2012near} provide strong approximation guarantees for it. The softmax approach is usually preferred as it has a closed form solution which is easier to work with. Now, suppose that $|X| = n$, and let $\mathbf{L}$ be the $n \times n$ kernel of the DPP and $\mathbf{I}$ be the $n \times n$ identity matrix, then $G(\vx) = \log \det[ \diag (\vx) \mathbf{(L-I) + I}]$ is the softmax extension for $\vx \in [0,1]^n$. Here, $ \diag (\vx)$ corresponds to a diagonal matrix with the entries of $\vx$ along its diagonal.

Observe now that, given a vector $\vx \in [0,1]^n$, $x_i$ can be thought of as the likelihood of picking element $i$. Moreover, $\mathbf{L}_{ij}$ captures the similarity between elements $i$ and $j$. Hence, our choice for a concave function which promotes similarity among elements is $C(\vx) = \sum_{i,j} \mathbf{L}_{ij}(1 - (x_i - x_j)^2)$. The rationale behind this is as follows. For a particular pair of elements $i$ and $j$, if $\mathbf{L}_{ij}$ is large, that means that $i$ and $j$ are similar, so we would want $C$ to be larger when $\mathbf{L}_{ij}$ is high, provided that we are indeed \emph{picking both} $i$ and $j$ (i.e., provided that $(x_i - x_j)^2$ is small). One can verify that the function $C(\vx)$ is indeed concave as its Hessian is negative semidefinite.

In our first experiment we fix the ground set to be the set of $20 \times 20 = 400$ points evenly spaced in $[0,1] \times [0,1] \subset \mathbb{R}^2$. We also choose $\mathbf{L}$ to be the Gaussian kernel $\mathbf{L}_{ij} = \exp(-d(i,j)^2/2 \sigma^2)$, where $d(i,j)$ is the Euclidean distance between points $i$ and $j$, and $\sigma = 0.04$.
But to define points $i$ and $j$, we need some ordering of the 400 points on the plane. So, we call point 0 to be at the origin and the index increases till we reach point 19 at $(1,0)$. Then we have point 20 at $ (0,\nicefrac{1}{20})$ and so on till we finally reach point 399 at $(1,1)$.
Given the functions $G$ and $C$ defined above, we optimize in this experiment a combined objective formally specified by $F = \lambda G + (1-\lambda) C$, where $\lambda \in [0,1]$ is a control parameter that can be used to balance the contrasting objectives represented by $G$ and $C$. For example, setting $\lambda = 1$ produces the (spread out) pure DPP MAP solution, setting $\lambda = 0$ produces the (clustered) pure concave solution and $\lambda = 0.5$ produces a solution that takes both constraints into consideration to some extent. It is important to note, however, that the effect of changing $\lambda$ on the importance that each type of constraint gets is not necessarily linear---although it becomes linear when the ranges of $G$ and $C$ happen to be similar.

\begin{figure}[t!]
  \centering
  \includegraphics[width=0.9\columnwidth]{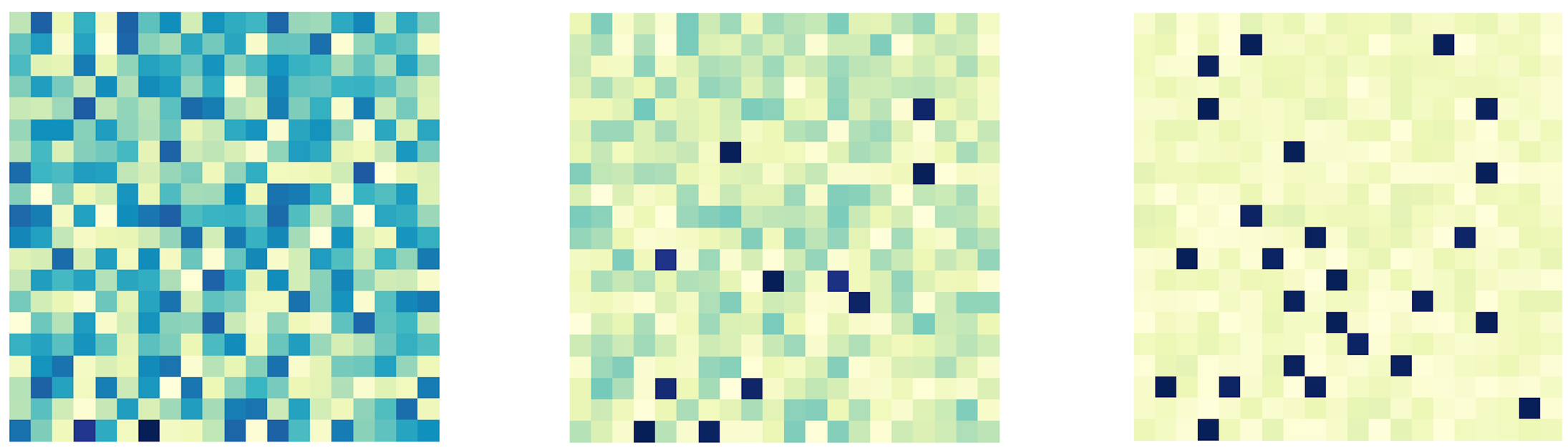}
  \caption{Outputs of the experiment described in Section~\ref{experiments:interpolation}. \textsc{Left Plot}: $\lambda = 1$ (just the submodular objective). \textsc{Right Plot}: $\lambda = 0$ (just the concave objective). \textsc{Middle Plot}: $0<\lambda<1$ (combination of both objectives). A darker square corresponds to a larger entry in the final output of Algorithm~\ref{alg:frank_wolfe} for the corresponding element.}
  \label{fig:lambdafig}
  \vspace{-0.3cm}
\end{figure}

In Figure \ref{fig:lambdafig}, we can see how changing $\lambda$ changes the solution. The plots in the figure show the final output of Algorithm \ref{alg:frank_wolfe} when run on just the submodular objective $G$ (left plot), just the concave objective $C$ (right plot), and a combination of both (middle plot). The algorithm is run with the same cardinality constraint of 25 in all plots, which corresponds to imposing that the $\ell_1$ norm of each iteratation must be at most 25. It is important to note that we represent the exact (continuous) output of the algorithm here. To get a discrete solution, a rounding method should be applied. Also, all of the runs of the algorithm begin from the same fixed starting point inside the cardinality constrained polytope. The step sizes used by the different runs are all constant, and were chosen empirically.

\subsection{Other Submodular + Concave Objectives}

In this section we compare our algorithms with two baselines: the Frank-Wolfe algorithm \citep{frankwolfe1956} and the projected gradient ascent algorithm. In all the experiments done in this section, the objective function is again $F = \lambda G + (1-\lambda) C$, where $G$ and $C$ are a DR-submodular function and a concave function, respectively. For simplicity, we set $\lambda = 1/2$ throughout the section.

\subsubsection{Quadratic Programming} \label{section:quadratic_programming}

In this section, we define $G(\vx) = \frac{1}{2} ~\vx^\top \mathbf{H} \vx + \mathbf{h}^\top \vx + c$. Note that by choosing an appropriate matrix $\mathbf{H}$ and vector $\mathbf{h}$, this objective can be made to be monotone or non-monotone DR-submodular. We also define the down-closed constraint set to be $P = \{ \vx \in \bR_+^n ~|~ \mathbf{Ax \leq b, x  \leq u, A} \in \bR_{+}^{m \times n}, \mathbf{b} \in \bR_+^m \}$. 
Following \cite{bian2017continuous}, we choose the matrix $\mathbf{H} \in \bR^{n\times n}$ to be a randomly generated symmetric matrix with entries uniformly distributed in $[-1,0]$, and the matrix $\mathbf{A}$ to be a random matrix with entries uniformly distributed in $[0.01, 1.01]$ (the $0.01$ addition here is used to ensure that the entries are all positive). The vector $\vb$ is chosen as the all ones vector, and the vector $\vu$ is a tight upper bound on $P$ whose $i^\text{th}$ coordinate is defined as $u_i = \min_{j \in [m]} b_j/\mathbf{A}_{ji}$. We let $\vh = -0.2\cdot \mathbf{H}^\top \vu$ which makes $G$ non-monotone. Finally, although this does not affect the results of our experiments, we take $c$ to be a large enough additive constant (in this case 10) to make $G$ non-negative.

It is know that when the Hessian of a quadratic program is negative semidefinite, the resulting objective is concave. Accordingly, we let $C(\vx) = \frac{1}{20} ~\vx^\top \mathbf{D} \vx$, where $\mathbf{D}$ is a negative semidefinite matrix defined by the negation of the product of a random matrix with entries in $[0,1]$ with its transpose. As one can observe, the generality of DR-submodular + concave objectives allows us to consider quadratic programming with very different Hessians. We hope that our ability to do this will motivate future work about quadratic programming for a broader class of Hessian matrices.

In the current experiment, we let $n \in \{8, 12, 16\}$ and $m \in\{0.5n, n, 1.5n\}$, and run each algorithm for 50 iterations. Note that having fixed the number of iterations,  the maximum step size for Algorithms~\ref{alg:continuous_greedy} and \ref{alg:measured_continuous_greedy} is upper bounded by $($number of iterations$)^{-1} = \nicefrac{1}{50}$ to guarantee that these algorithms remain within the polytope. To ensure consistency, we set the step sizes for the other algorithms to be $\nicefrac{1}{50}$ as well, except for Algorithm~\ref{alg:non_ob_fw} for which we set to the value of $\eps$ given by solving $e^{-1} \cdot \beta(\eps) / \eps^2 = 50$. This ensures that the gradient computation in Algorithm \ref{alg:non_ob_fw} is not too time consuming. We start Algorithms \ref{alg:continuous_greedy} and \ref{alg:measured_continuous_greedy} from the starting point their pseudocodes specify, and the other algorithms from the same arbitrary point. We show the results for the aforementioned values of $n$ and $m$ in Figure \ref{fig:fig}. The appropriate values of $n$ and $m$ are mentioned below each plot, and each plot graphs the average of $50$ runs of the experiment. We also note that since Algorithms \ref{alg:frank_wolfe} and \ref{alg:non_ob_fw} output the best among the results of all their iterations, we just plot the final output of these algorithms instead of the entire trajectory.

\subsubsection{D-optimal Experimental Design} \label{ssc:d_optimal}

Following \citet{chen2018online}, the DR-submodular objective function for the D-optimal experimental design problem is $ G(\vx) = \log \det \left( \sum_{i=1}^n x_i \mathbf{Y_i^\top Y_i} \right)$.  Here, $\mathbf{Y_i}$ is an $n$ dimensional row-vector in which each entry is drawn independently from the standard Gaussian distribution.  The choice of concave function is $C(\vx) = \tfrac{1}{10} \sum_{i=1}^n \log (x_i)$.
In this experiment there is no combinatorial constraint. Instead, we are interested in maximization over a box constraint, i.e., over $[1,2]^n$ (note that the box is shifted compared to the standard $[0, 1]^n$ to ensure that $G$ is defined everywhere as it is undefined at $\vx = \mathbf{0}$).
The final outputs of all the algorithms for $n=8, 12, 16$ appear in Figure~\ref{fig:expt_design_fig}. Like in Section~\ref{section:quadratic_programming}, each algorithm was run for $50$ iterations, and each plot is the average of $50$ runs. The step sizes and starting points used by the algorithms are set exactly like in Section~\ref{section:quadratic_programming}.

\paragraph{Takeaways.} Based on our experiments, we can observe that Algorithms~\ref{alg:continuous_greedy} and~\ref{alg:non_ob_fw} consistently outperform the other algorithms. We can also see (especially in the D-optimal experimental design problem where they almost superimpose) that the difference between Algorithm~\ref{alg:frank_wolfe} and the standard Frank-Wolfe algorithm are minimal, but we believe that the difference between the two algorithms can be made more pronounced by considering settings in which the gradient of $C$ dominates the gradient of $G$. Finally, one can note that the output values in the plots corresponding to the quadratic programming problem  discussed in Section \ref{section:quadratic_programming} tend to decrease when the number of constraints increases, which matches our intuitive expectation.

\section{Conclusion}\label{section:conclusion}

In this paper, we have considered the maximization of a class of objective functions that is strictly larger than both DR-submodular functions and concave functions. The ability to optimize this class of functions using first-order information is interesting from both theoretical  and practical points of view. Our results provide a step towards the goal of efficiently analyzing structured non-convex functions---a goal that is becoming increasingly relevant.


\begin{figure}[H]
\begin{subfigure}{.49\textwidth}
	 \centering
  \includegraphics[width=\textwidth]{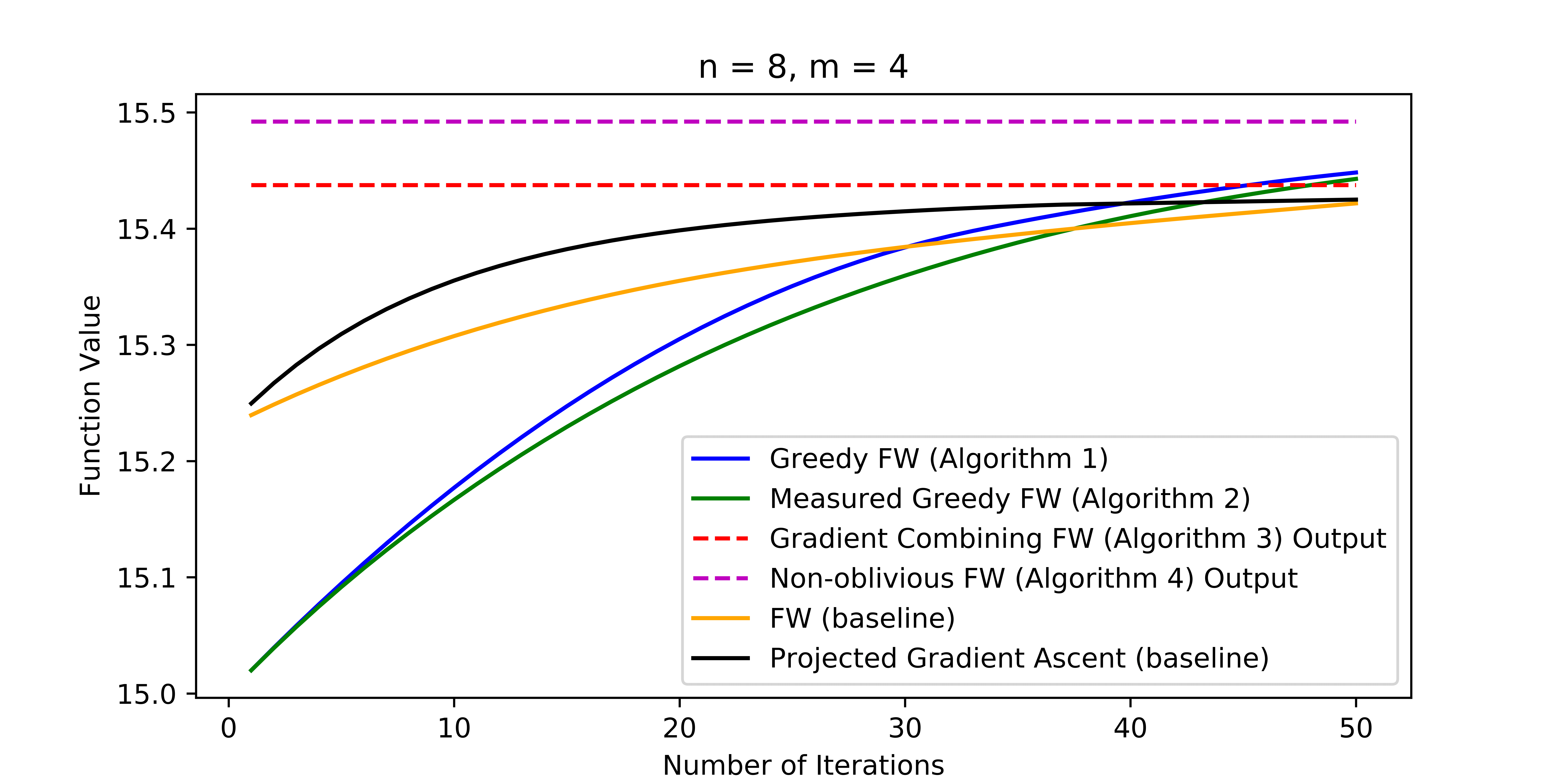}
  \caption{$n=8, m=4$}
  \label{fig:qp_n8m4}
\end{subfigure}
\hfill
\begin{subfigure}{.49\textwidth}
  \centering
  \includegraphics[width=\linewidth]{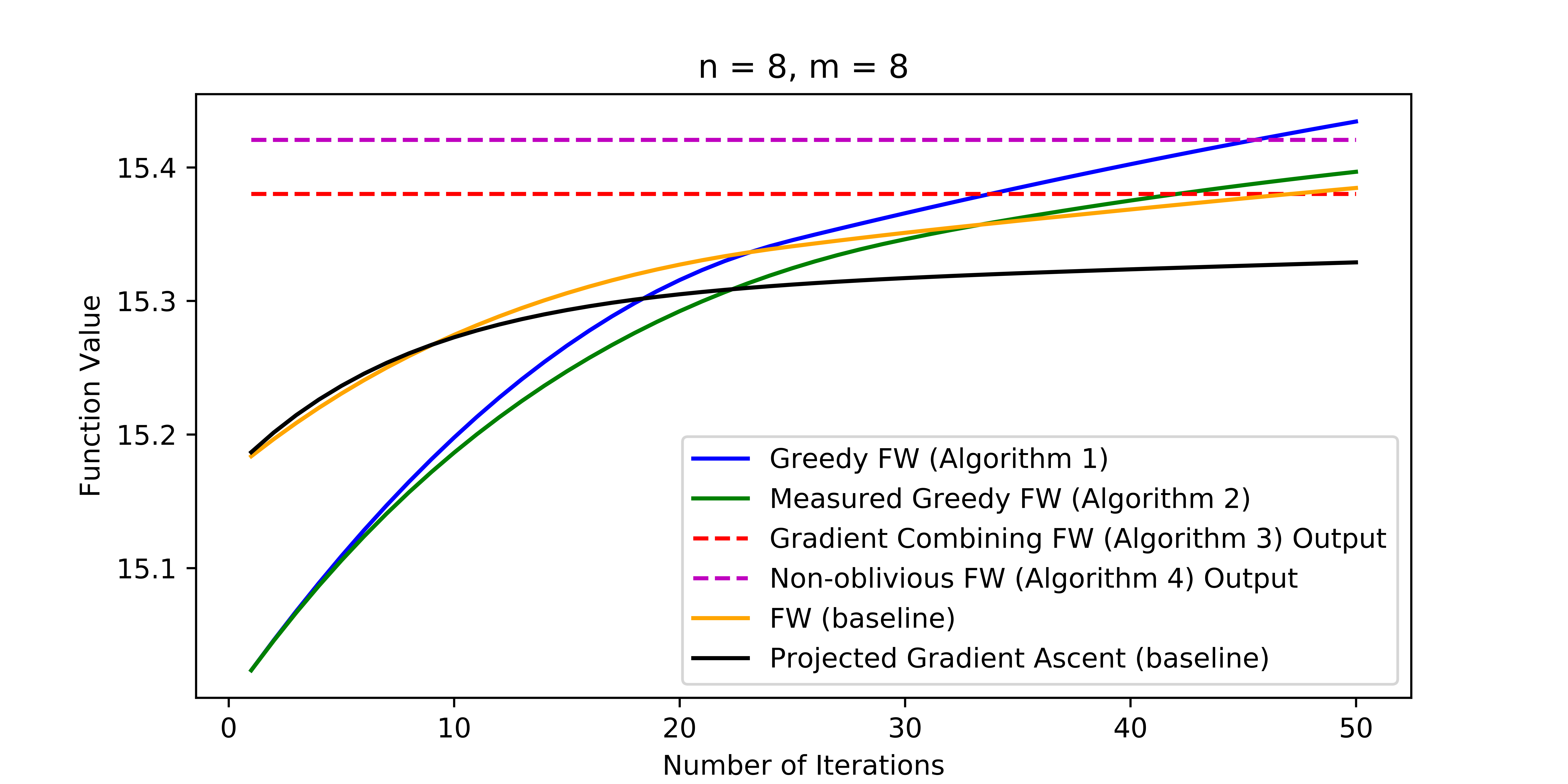}
  \caption{$n=8, m=8$}
  \label{fig:qp_n8m8}
\end{subfigure}

\end{figure}
\begin{figure}\ContinuedFloat

\begin{subfigure}{.49\textwidth}
  \centering
  \includegraphics[width=\linewidth]{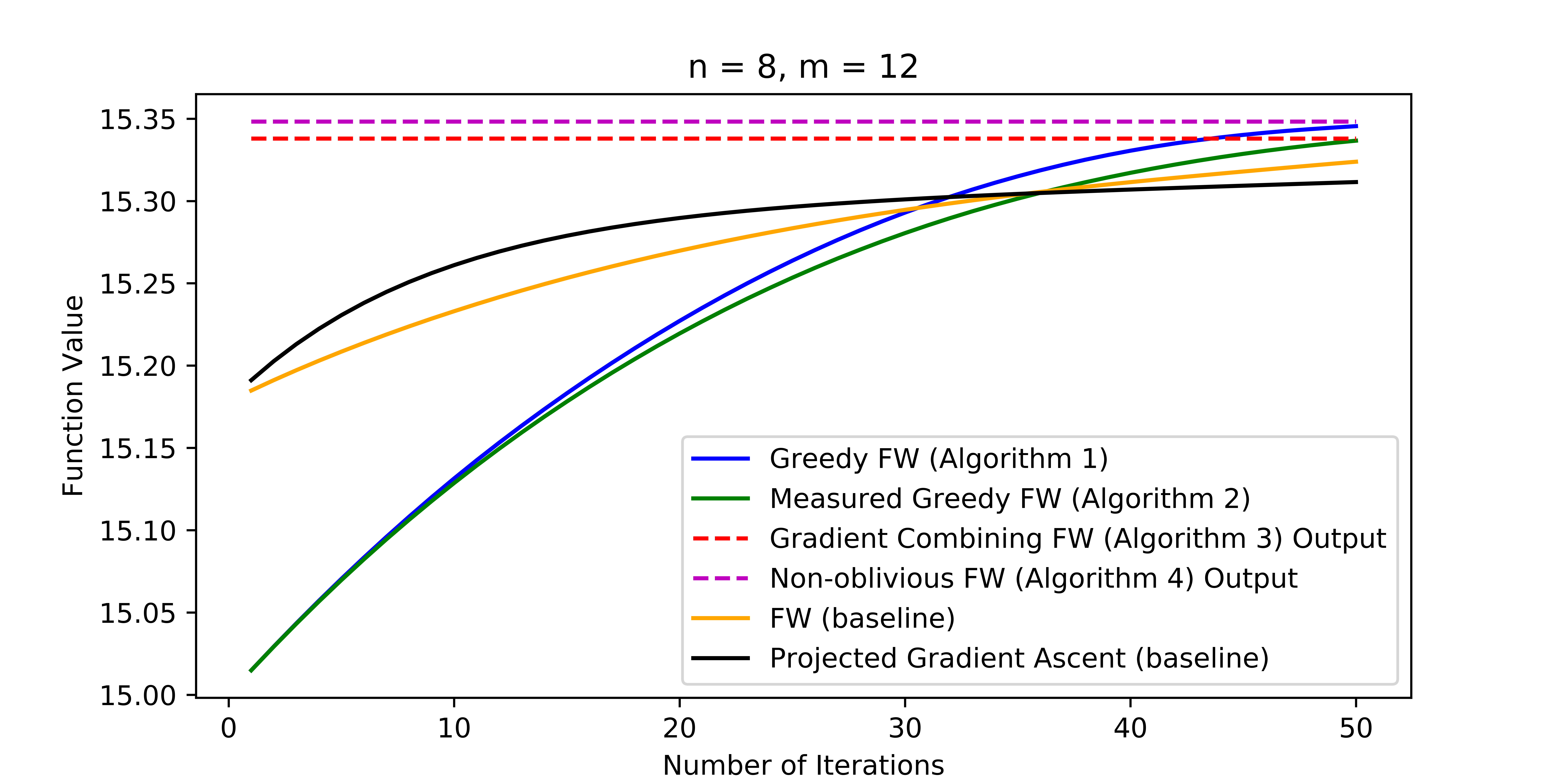}
  \caption{$n=8, m=12$}
  \label{fig:qp_n8m12}
\end{subfigure}
\hfill
\begin{subfigure}{.49\textwidth}
  \centering
  \includegraphics[width=\linewidth]{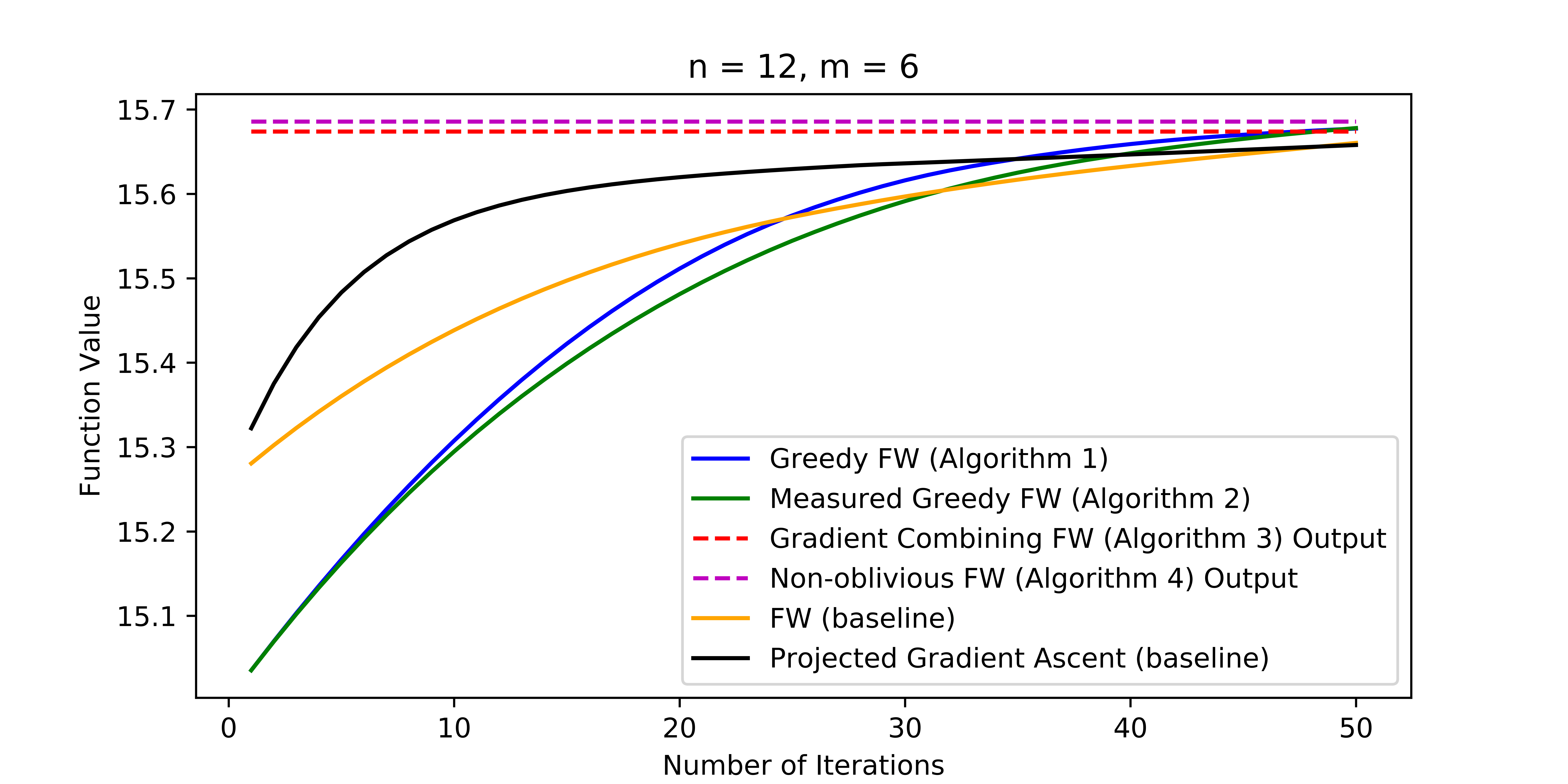}
  \caption{$n=12, m=6$}
\end{subfigure}

\begin{subfigure}{.49\textwidth}
  \centering
  \includegraphics[width=\linewidth]{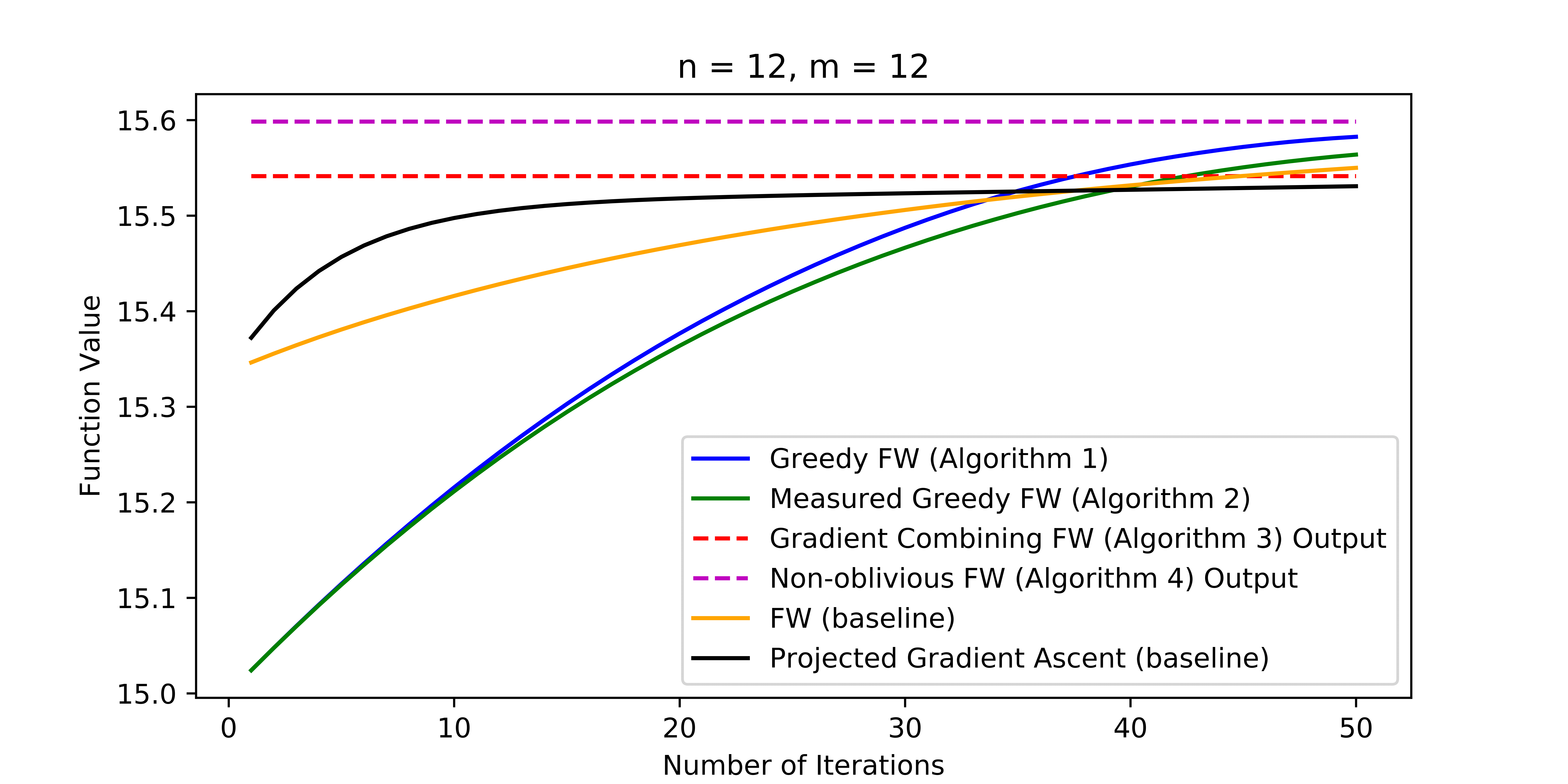}
  \caption{$n=12, m=12$}
\end{subfigure}
\hfill
\begin{subfigure}{.49\textwidth}
  \centering
  \includegraphics[width=\linewidth]{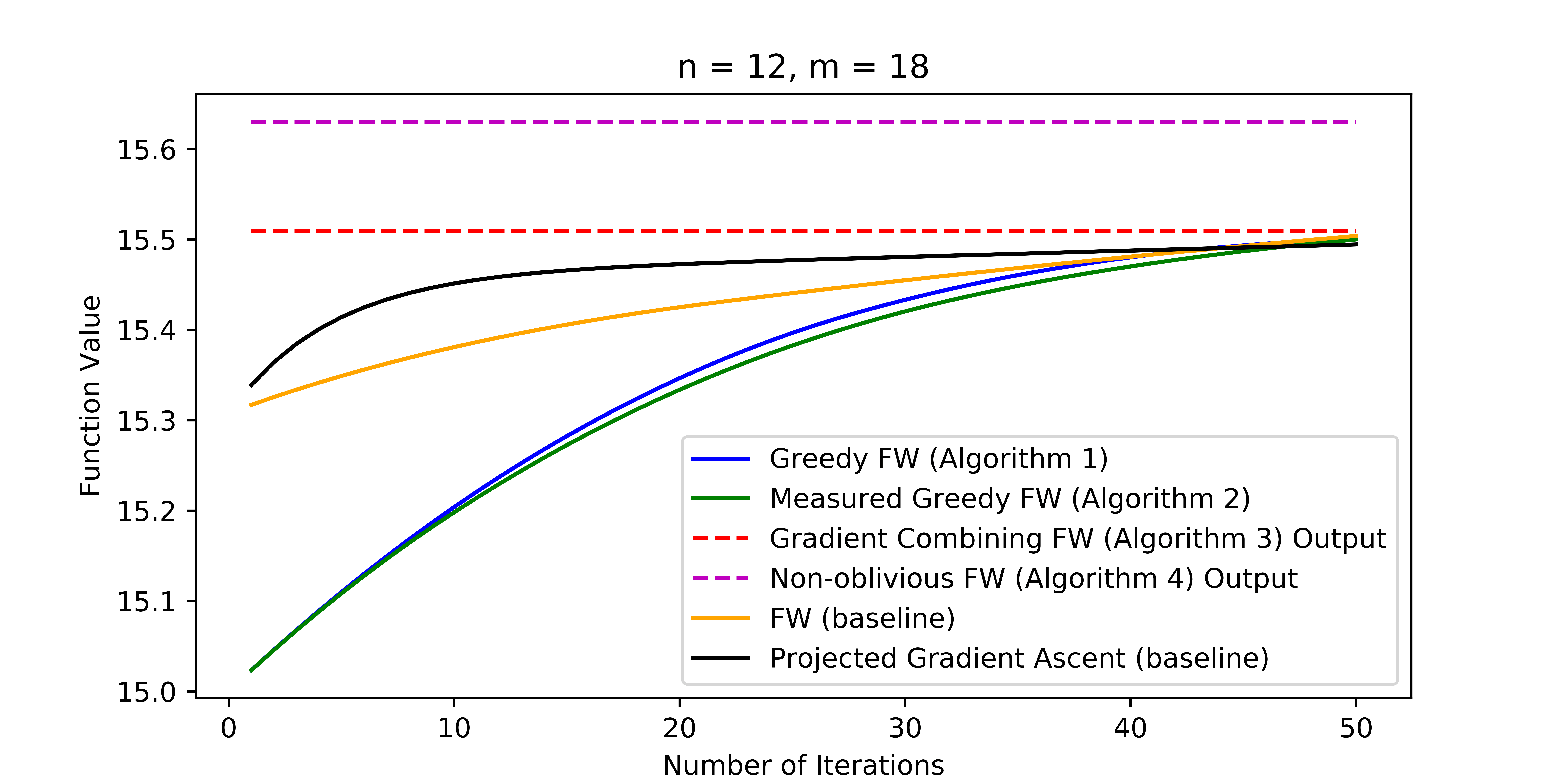}
  \caption{$n=12, m=18$}
\end{subfigure}

\begin{subfigure}{.49\textwidth}
  \centering
  \includegraphics[width=\linewidth]{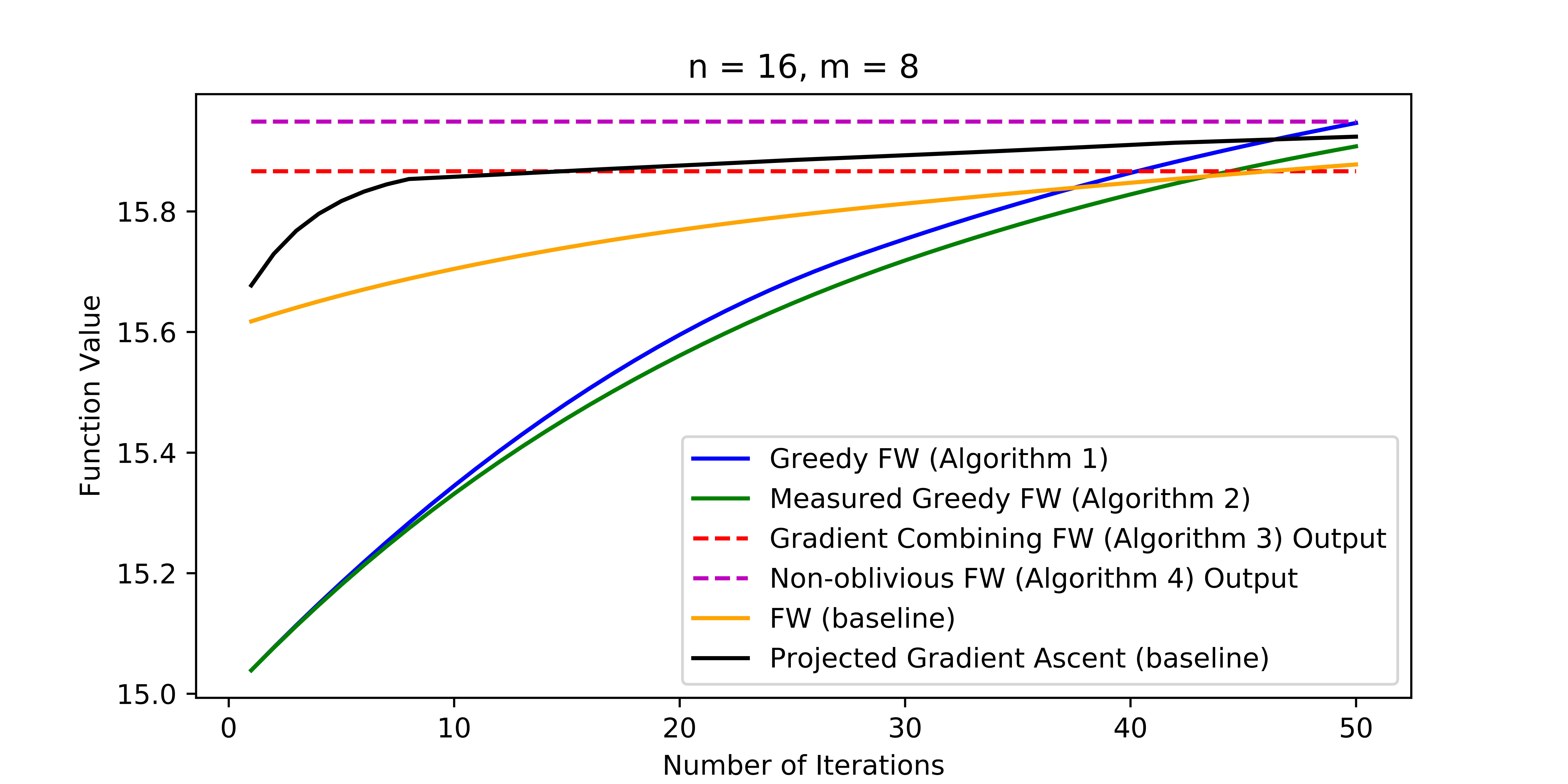}
  \caption{$n=16, m=8$}
\end{subfigure}
\hfill
\begin{subfigure}{.49\textwidth}
  \centering
  \includegraphics[width=\linewidth]{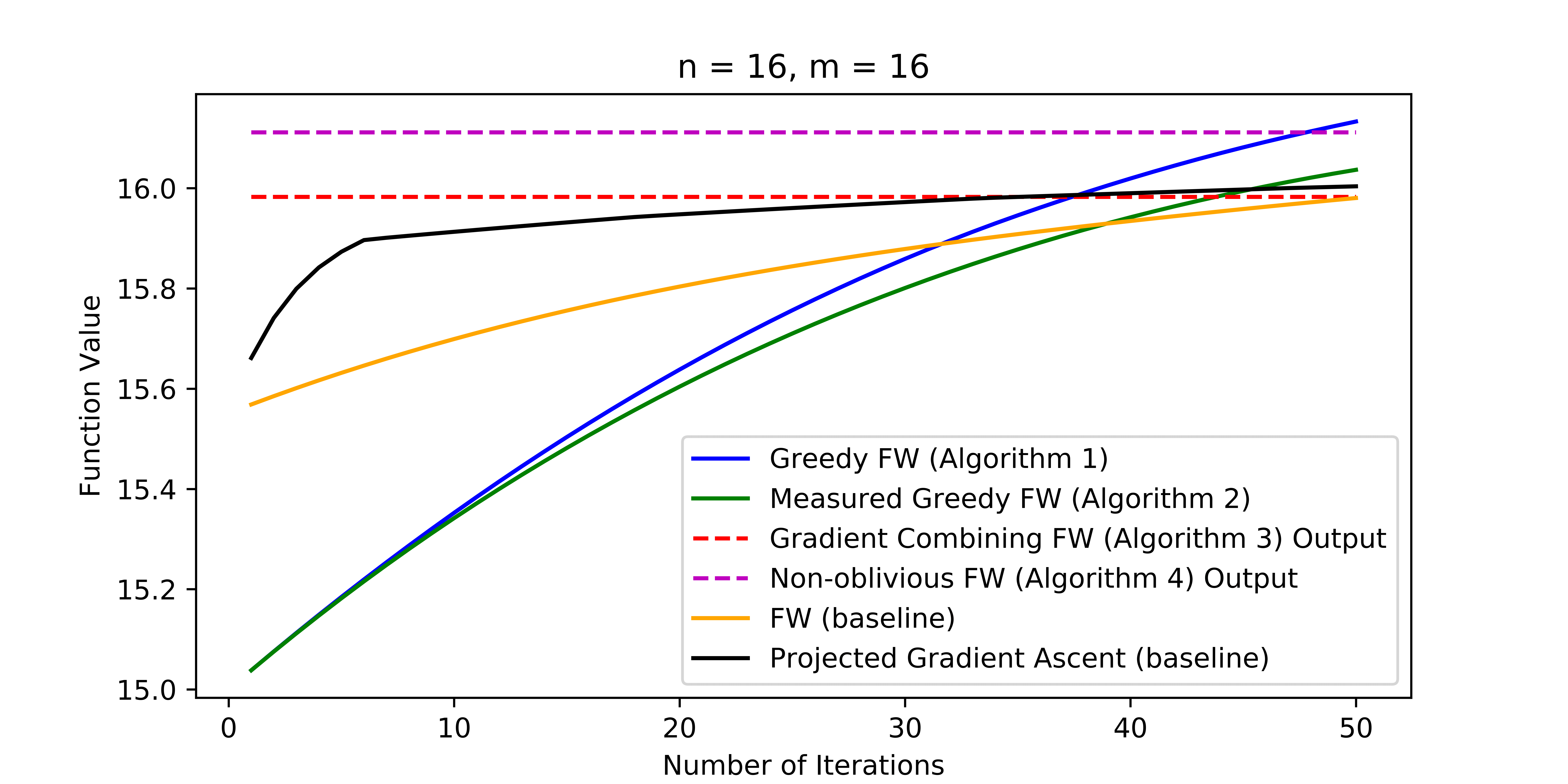}
  \caption{$n=16, m=16$}
\end{subfigure}

\begin{subfigure}{.49\textwidth}
  \centering
  \includegraphics[width=\linewidth]{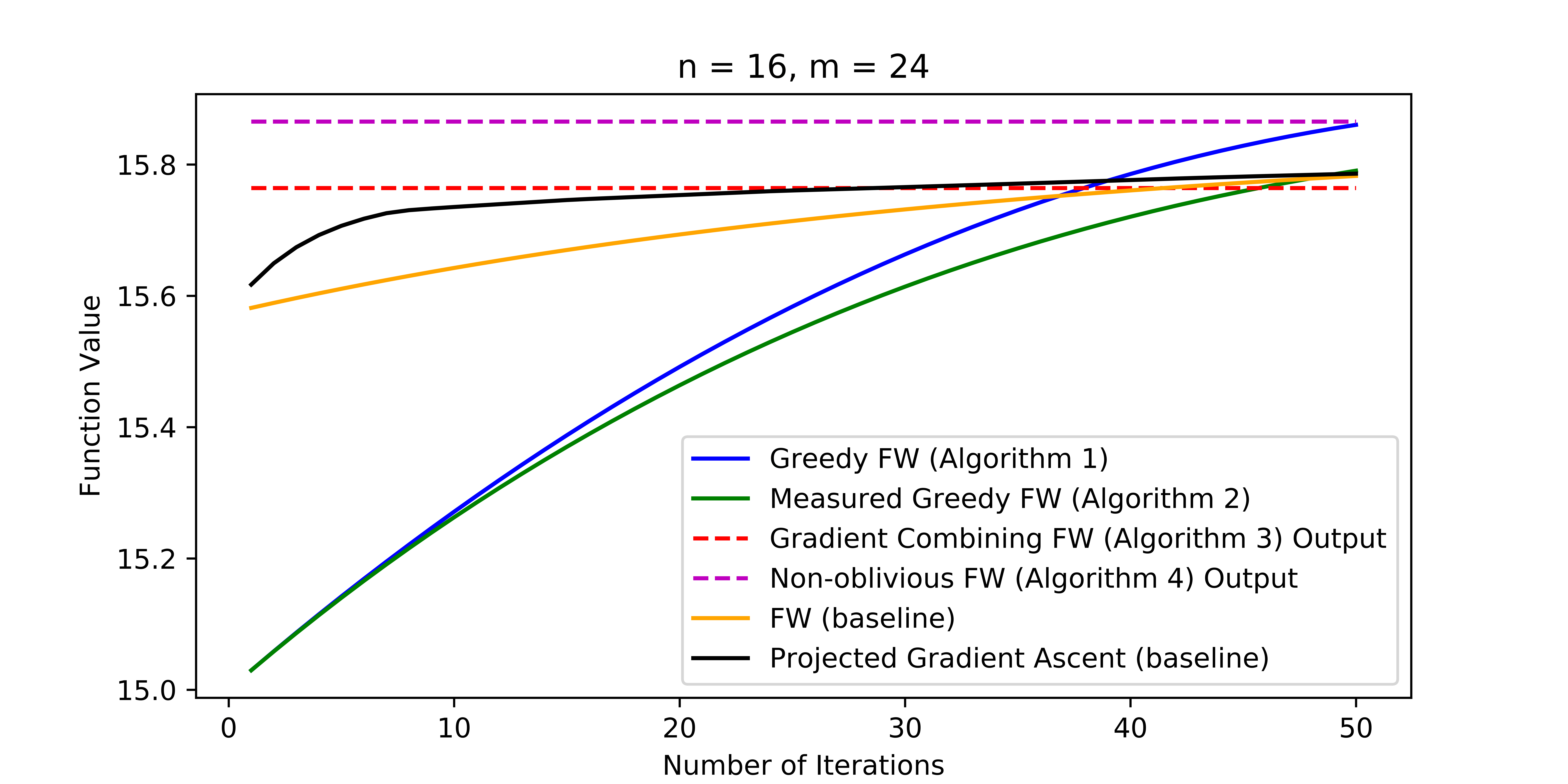}
  \caption{$n=16, m=24$}
\end{subfigure}
\hfill
\begin{subfigure}{.49\textwidth}
  \centering
  \includegraphics[width=\linewidth]{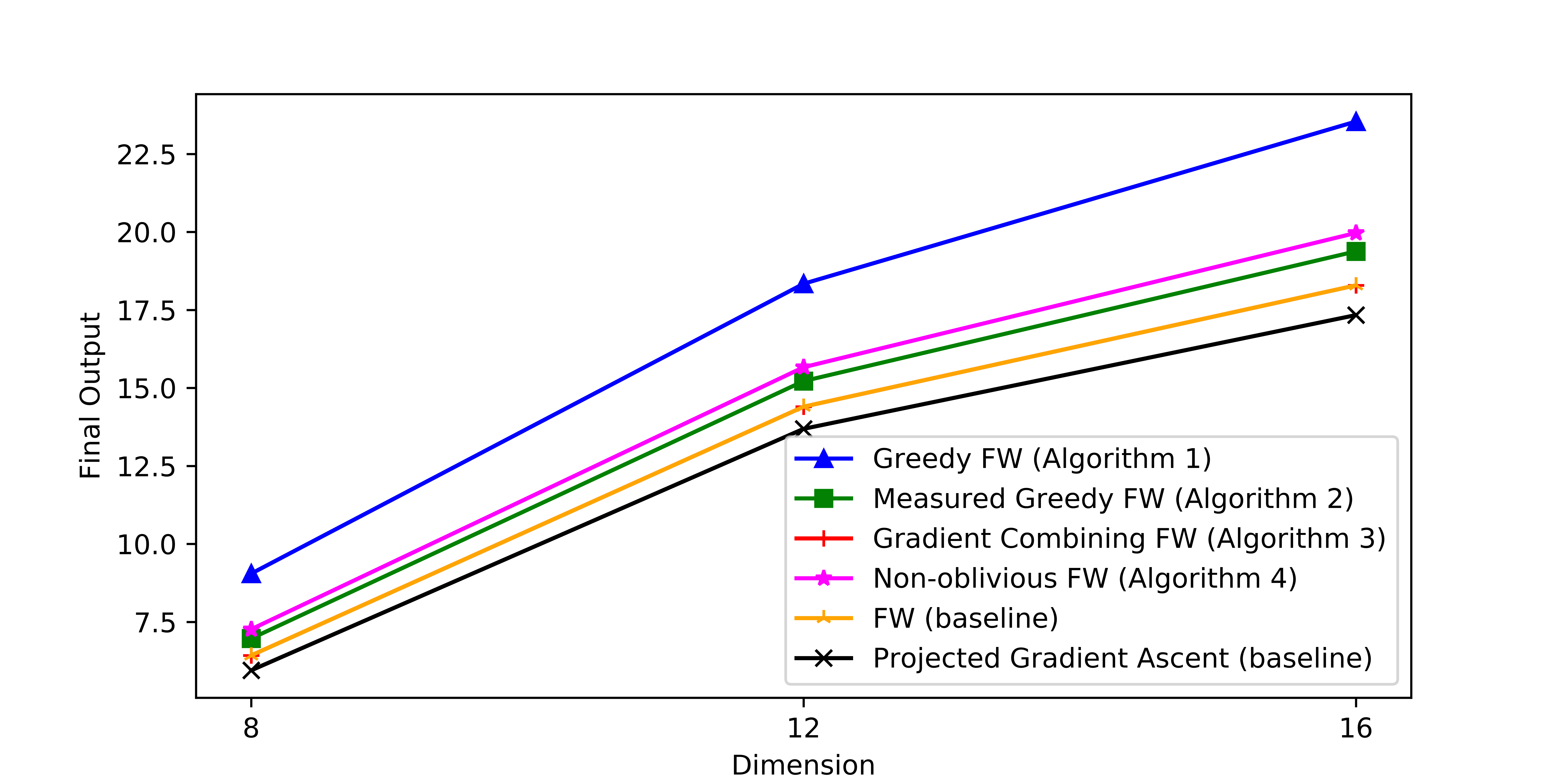}
  \caption{Final outputs for the D-optimal design problem.}
  \label{fig:expt_design_fig}
\end{subfigure}
\caption{All the plots except for \ref{fig:expt_design_fig} correspond to the quadratic programming experiment from Section~\ref{section:quadratic_programming}. For these plots, the dimension ($n$) and the number of constraints ($m$) is mentioned below each plot. Plot~\ref{fig:expt_design_fig} pertains to the D-optimal experimental design problem from Section~\ref{ssc:d_optimal}.}
\label{fig:fig}
\vspace{-0.3cm}
\end{figure}


\bibliographystyle{plainnat}
\bibliography{submodularPlusConcave}

\end{document}